\documentclass[11pt]{amsart}

\usepackage{amscd,latexsym,amsthm,amsfonts,amssymb,amsmath,amsxtra,dsfont,braket,bbm,bm}

\usepackage{ytableau}
\ytableausetup{smalltableaux}

\usepackage{dynkin-diagrams}

\usepackage{multirow}
\renewcommand{\arraystretch}{1.5}

\usepackage[all]{xy}

\usepackage[mathscr]{eucal}
\usepackage[pdftex,bookmarks,colorlinks,pdfmenubar]{hyperref}

\usepackage{verbatim}

\renewcommand\theequation{\thesection.\arabic{equation}}

\usepackage{tikz-cd}
\usepackage{ytableau}
\ytableausetup{centertableaux}

\def\inn#1#2{\left\langle #1, #2\right\rangle}

\DeclareMathOperator{\AV}{AV}

\newcommand{\BC}{{\mathbb {C}}}

\newcommand{\BG}{{\mathbb {G}}}

\newcommand{\BR}{{\mathbb {R}}}

\newcommand{\BZ}{{\mathbb {Z}}}

\renewcommand{\CD}{{\mathcal {D}}}

\newcommand{\CG}{{\mathcal {G}}}

\newcommand{\CJ}{{\mathcal {J}}}
\newcommand{\CK}{{\mathcal {K}}}

\newcommand{\CN}{{\mathcal {N}}}
\newcommand{\CO}{{\mathcal {O}}}

\newcommand{\CS}{{\mathcal {S}}}

\newcommand{\CV}{{\mathcal {V}}}

\newcommand{\CX}{{\mathcal {X}}}
\newcommand{\CY}{{\mathcal {Y}}}
\newcommand{\CZ}{{\mathcal {Z}}}

\newcommand{\Fc}{{\mathfrak {c}}}

\newcommand{\Fg}{{\mathfrak {g}}}
\newcommand{\Fh}{{\mathfrak {h}}}

\newcommand{\Fl}{{\mathfrak {l}}}
\newcommand{\Fm}{{\mathfrak {m}}}
\newcommand{\Fn}{{\mathfrak {n}}}

\newcommand{\Fq}{{\mathfrak {q}}}

\newcommand{\Fs}{{\mathfrak {s}}}

\newcommand{\Fu}{{\mathfrak {u}}}

\newcommand{\Fx}{{\mathfrak {x}}}
\newcommand{\Fy}{{\mathfrak {y}}}

\newcommand{\RU}{{\mathrm {U}}}

\newcommand{\Ad}{{\mathrm{Ad}}}

\newcommand{\ari}{{\mathrm{ari}}}

\newcommand{\av}{{\mathrm{AV}}}

\newcommand{\disc}{{\mathrm{disc}}}

\newcommand{\des}{{\mathrm{des}}}

\newcommand{\Gal}{{\mathrm{Gal}}}
\newcommand{\GL}{{\mathrm{GL}}}

\newcommand{\Hom}{{\mathrm{Hom}}}

\newcommand{\ind}{{\mathrm{ind}}}

\newcommand{\Isom}{{\mathrm{Isom}}}

\newcommand{\Mp}{{\mathrm{Mp}}}

\newcommand{\mx}{\mathrm{max}}

\newcommand{\pr}{{\mathrm{pr}}}

\newcommand{\Res}{{\mathrm{Res}}}

\newcommand{\SO}{{\mathrm{SO}}}

\newcommand{\SU}{{\mathrm{SU}}}

\newcommand{\sgn}{{\mathrm{sgn}}}
\newcommand{\Sp}{{\mathrm{Sp}}}

\newcommand{\st}{{\mathrm{st}}}

\newcommand{\tr}{{\mathrm{tr}}}

\newcommand{\wf}{\mathrm{WF}}
\newcommand{\Wh}{\mathrm{Wh}}
\newcommand{\wm}{\mathrm{wm}}

\def\fpp{\mathfrak{p}}
\def\fbb{\mathfrak{b}}
\def\fqq{\mathfrak{q}}
\def\fll{\mathfrak{l}}
\def\fuu{\mathfrak{u}}
\def\fgg{\mathfrak{g}}
\def\pr{\mathrm{pr}}
\def\Gal{\mathrm{Gal}}

\newcommand{\udl}{\underline}

\newcommand{\wh}{\widehat}

\newcommand{\ol}{\overline}
\newcommand{\ul}{\underline}

\def\diag{{\rm diag}}

\def\sig{{\sigma}}

\newtheorem{thm}{Theorem}[section]
\newtheorem{cor}[thm]{Corollary}
\newtheorem{lem}[thm]{Lemma}
\newtheorem{prop}[thm]{Proposition}
\newtheorem {conj}[thm]{Conjecture}
\newtheorem {ques/conj}[thm]{Question/Conjecture}

\newtheorem{defn}[thm]{Definition}
\newtheorem{rmk}[thm]{Remark}

\newcommand{\be}{\begin{equation}}
\newcommand{\ee}{\end{equation}}

\usepackage[numbers]{natbib}

\newcommand{\ad}{\mathrm{ad}}

\begin{document}

\renewcommand{\theequation}{\arabic{equation}}
\numberwithin{equation}{section}

\title[Arithmetic Wavefront Set and Harish-Chandra Character]{Arithmetic Wavefront Set and Microlocal Structure of Harish-Chandra Character}

\author{Dihua Jiang}
\address{School of Mathematics, University of Minnesota, Minneapolis, MN 55455, USA}
\email{jiang034@umn.edu}

\author{Dongwen Liu}
\address{School of Mathematical Sciences, Zhejiang University, Hangzhou 310058, Zhejiang, P. R. China}
\email{maliu@zju.edu.cn}

\author[Z. Luo]{Zhikang Luo}
\address{Department of Mathematics,
National University of Singapore,
Singapore 119076}
\email{luozhikang@u.nus.edu}

\author[J.-J. Ma]{Jia-Jun Ma}
\address{School of Mathematical Sciences, Xiamen University, Xiamen, Fujian, P. R. China}
\email{hoxide@xmu.edu.cn}

\author{Lei Zhang}
\address{Department of Mathematics,
National University of Singapore,
Singapore 119076}
\email{matzhlei@nus.edu.sg}

\keywords{
Classical Group over Local Field, Admissible Representation and Casselman-Wallach Representation, Rationality of Local Langlands Correspondence, Local Descent, Nilpotent Orbit, Reciprocity for Wavefront Set.
}

\date{\today}
\subjclass[2010]{Primary 11F70; Secondary 20G25, 22E50}

\thanks{ 
The work of D.J. is partially supported by the Simons Grant: SFI-MPS-TSM-00013449.
The work of D.L. is partially supported by National Natural Science Foundation of China No. 12171421. 
The work of L.Z. is supported by AcRF Tier 1 grant A-8002960-00-00 and Tier 2 grant T2EP20225-0047 of National University of Singapore.
}

\begin{abstract} 
In this paper, we establish in Theorem \ref{thm:RWS} the reciprocity of wavefront sets for irreducible admissible representations $\pi$ of classical groups $G$ over any local field $F$ of characteristic zero if $\pi$ has a generic local $L$-parameter. Over archimedean local fields, based on the progress made in \cite{JLZ25}, we prove in Theorem \ref{thm:wfsc} that for an irreducible Casselman-Wallach representation $\pi$ with a generic local $L$-parameter, the Wavefront Set Conjecture \cite[Conjecture 1.2]{JLZ25} and its refinement (Conjecture \ref{conj:wfss}) hold for the arithmetic wavefront set $\wf_\ari(\pi)$ as defined by the associated enhanced local $L$-parameter of $\pi$ and the wavefront set $\wf_\tr(\pi)$ defined by the Harish-Chandra distribution character $\Theta_\pi$ of $\pi$. Hence the microlocal structure of $\Theta_\pi$ is completely determined by the arithmetic information carried by the enhanced local $L$-parameter of $\pi$. The relations with the algebraic wavefront set 
$\wf_\wm(\pi)$ defined by the degenerate Whittaker models are extensively discussed by means of the composition law 
(Theorem \ref{Thm:CL}) over all local fields of characteristic zero. Under Conjecture \ref{conj:DDW}, the Wavefront
Set Conjecture is fully established over archimedean local fields. As a consequence, we prove a refinement of Vogan's maximal-orbit principle (Theorem \ref{thm:VC}). 
\end{abstract}

\maketitle

\tableofcontents

\section{Introduction}\label{sec:I}

As a sequel of our previous work (\cite{JLZ25}),
we prove in this paper a large and essential part of the wavefront set conjecture (\cite[Conjecture 1.2]{JLZ25}) for real classical groups. This brings substantial progress for a large family of irreducible admissible representations of real classical groups in the investigation of long-standing open problems in the representation theory of real reductive Lie groups on relations among the wavefront set of an irreducible admissible representation and other relevant invariants. The approach we took is based on the theory of local descents for irreducible admissible representations, which has been well developed for representations with generic local Langlands parameters, and puts for the first time the study of the long-standing open problems on wavefront sets and related problems within the framework of the local Langlands program. 

Around 1970, L. H\"ormander introduced an influential notion of wavefront sets for distributions over smooth manifolds in the theory of Fourier integral operators (\cite{Hor71, Hor90}). For any unitary representation $\pi$ of a real Lie group $G$, R. Howe introduced around 1980 the notion of the wavefront set of $\pi$  (\cite{H81}). He first proved that the wavefront set $\wf(\pi)$ of the Harish-Chandra distribution character $\Theta_\pi$ (\cite{HC65a, HC65b}) is $G$-invariant under the adjoint action and is determined by its image under the projection $G\times\Fg^*\to\Fg^*$, where 
$\Fg$ is the Lie algebra of $G$ and $\Fg^*$ is the dual space of $\Fg$. Hence the wavefront set of $\pi$ as defined by R. Howe is a subset of the nilpotent cone of $\Fg$ if $G$ is reductive and $\Fg^*$ is identified with $\Fg$ via the Killing form. 

As expected in \cite{H81, BV80}, for a real reductive group $G$, the microlocal structure of the wavefront set of an irreducible admissible representation $\pi$ of $G$ should be detected by the germ expression near the identity of the distribution character $\Theta_\pi$. Meanwhile, 
D. Vogan introduced the notion of associated variety $\av(\pi)$ of a Harish-Chandra module of a reductive Lie group and conjectured that $\av(\pi)=\wf(\pi)$, which provides an algebraic approach to understand the microlocal structure of the distribution character $\Theta_\pi$. It remains a difficult problem in theory to explicitly determine the structure of either the associated varieties $\av(\pi)$ or the wavefront sets $\wf(\pi)$. 

The expectation that an irreducible representation should have a well-defined maximal nilpotent support goes back to the Kostant-Kirillov orbit method philosophy and to the early work on primitive ideals.  Duflo's work on primitive ideals \cite{Duf77} provided one of the algebraic foundations for this viewpoint.  For complex semisimple Lie groups, the relationship between primitive ideals, orbital integrals, local character expansions, and nilpotent orbits was developed in the work of Barbasch--Vogan and Joseph; see, for example, \cite{BV80,BV82,BV83,BV85,Jos85}.  In this setting the maximal nilpotent support is governed by the closure of a single nilpotent orbit.  We shall refer to this principle as Vogan's maximal-orbit principle.  This viewpoint is also consistent with the Kirillov--Kostant orbit method for the unitary dual of real reductive groups, as explained for instance in \cite{V87,V97,V00,AV21}.

In general, let $F$ be an arbitrary local field of characteristic zero and $G$ be the $F$-rational points of a reductive algebraic group $\mathbb{G}$ over $F$. Denote by $\Pi_F(G)$ the set of equivalence classes of irreducible admissible representations of $G$, which in the archimedean case are of Casselman-Wallach type. 
When $F$ is non-archimedean,  for any $\pi \in \Pi_F(G)$, an asymptotic expansion of the distribution character $\Theta_\pi$ near the identity was established by R. Howe (\cite{H74}; see also \cite{He85, Prz90}) and Harish-Chandra (\cite{HC78}):
\begin{align}\label{thetapi}
\Theta_\pi=\sum_{\CO\in\CN_F(\Fg)_\circ}c_\CO\wh{\mu}_\CO,\quad c_\CO\in\BC,
\end{align}
where $\CN_F(\Fg)_\circ$ denotes the set of $F$-rational nilpotent orbits in the Lie algebra $\Fg$ of the group $G$ under the adjoint action of $G$ and
$\wh{\mu}_\CO$ denotes the Fourier transform of the (normalized) measure $\mu_\CO$ on the nilpotent orbit $\CO$. From \eqref{thetapi}, one defines the {\bf analytic wavefront set} of $\pi\in\Pi_F(G)$ to be
\begin{align}\label{wf-tr}
\wf_\tr(\pi):=\set{\CO\in\CN_F(\Fg)_\circ  |  c_\CO\neq 0},
\end{align}
where $c_\CO$'s are as in \eqref{thetapi}. When $F$ is an archimedean local field, following the work of D. Barbasch and D. Vogan (\cite{BV80}), one may define $\wf_\tr(\pi)$ in a similar way. 

Over a non-archimedean local field $F$, C. M\oe glin and J.-L. Waldspurger define in \cite{MW87} (see also \cite{K87}) the 
{\bf algebraic wavefront set} $\wf_\wm(\pi)$ of $\pi\in\Pi_F(G)$ by means of the {\it generalized Whittaker models} of $\pi$ associated with any nilpotent element $f$ in the set $\CN_F(\Fg)$ of all nilpotent elements in $\Fg$ (see Section \ref{sec:GW} for some details).
One of the main results in \cite{MW87,Vs14} shows that under the $F$-rational topological order (Definition \ref{defn:order}), the set of the maximal members, which is denoted by $\wf_\tr(\pi)^{r-\mx}$, of the analytic wavefront set $\wf_\tr(\pi)$ coincides with the set of maximal members, which is denoted by $\wf_\wm(\pi)^{r-\mx}$, of the algebraic wavefront set $\wf_\wm(\pi)$, that is,
\begin{align}\label{alg-ana-wfs}
\wf_\tr(\pi)^{r-\mx}=\wf_\wm(\pi)^{r-\mx}
\end{align}
holds for any $\pi\in \Pi_F(G)$. It is important to mention that over any non-archimedean local field $F$, a wavefront set may not be the closure of a single nilpotent orbit, due to the examples constructed by C.-C. Tsai (\cite{T24,T25}), although it remains the closure of a single nilpotent orbit in many cases. 

It is natural to extend the notion of the generalized Whittaker models to any irreducible Casselman-Wallach representations of real reductive groups and to conjecture that the assertion as in \eqref{alg-ana-wfs} holds for the archimedean case. However, only a few special situations in the archimedean case have been studied (see \cite{V78, Mat87, Mat90, Mat92, GS15, GGS17, AA26} for instance). 
It should be mentioned that when $F$ is archimedean, there are more invariants (associated variety, characteristic cycle and wavefront cycle, etc.) of $\pi$ closely related to the wavefront set of $\pi$ (\cite{SV00, V17}). These invariants form a rich theory. We refer to the papers of W. Rossmann (\cite{R95a,R95b}), of D. Vogan (\cite{V17}), of J. Adams and D. Vogan (\cite{AV21}), and of W. Schmid and K. Vilonen (\cite{SV00}) for excellent surveys and detailed discussions of the theory, in particular, for the relation between the explicit computation of the wavefront sets and the construction of the unitary dual of the real reductive groups via the Kirillov-Kostant orbit method (\cite{Kos70, V00, Kir04, MB25, LY23, DMB25}).  

It is important to point out that in \cite{AV21}, Adams and Vogan found, when $F$ is archimedean, 
a sophisticated and comprehensive algorithm,  based on certain profound geometric structures behind the wavefront sets, to compute the $\ol{F}$-structure of the wavefront set $\wf_\tr(\pi)$ for $\pi\in\Pi_F(G)$  via the complex-geometrical parametrization of $\Pi_F(G)$. It remains a difficult problem to determine the $F$-rational structure of the wavefront sets in general. 

For families of special unipotent representations of $G$, some substantial progress on determination of the $F$-rational structure of the wavefront sets was made by J.-L. Waldspurger (\cite{W18, W19, W20}) and by D. Ciubotaru, L. Mason-Brown, E. Okada  and others (\cite{CMBO21, CMBO24a, CMBO24b, CMBO25}) for the non-archimedean case, and by D. Barbasch, J.-J. Ma, B. Sun and C.-B. Zhu (\cite{BMSZ25, BMSZ}) for the archimedean case. Those results are essential to the explicit construction of the unipotent part of the unitary dual $\wh{G}$ of $G$, which is regarded as the building block of the whole unitary dual. For other families of representations, we refer to relevant papers (\cite{M96,Hb12,Q14,HHO16,HW17,Hb18,AGS24}), for instance. 

It is worthwhile to mention that even weaker information on the wavefront sets than the complete $F$-rational structure may find substantial applications. For instance, in the twisted automorphic descent theory of D. Jiang and L. Zhang (\cite{JZ20}), the information of wavefront sets played a substantial role in their construction of explicit automorphic modules for irreducible cuspidal automorphic representations of classical groups and in their proof of the global Gan-Gross-Prasad conjecture. It is also used in the work of 
D. Jiang and B. Liu (\cite{JL18,JL25}) to determine which global Arthur packets have no cuspidal members. The local upper bound conjecture on the wavefront sets by D. Jiang (\cite{J14}) and its variants have been extensively studied by B. Liu, C.-H. Lo and F. Shahidi (\cite{LLS24, LS22, LS26}) and by H. Atobe, D. Ciubotaru and J.-L. Kim (\cite{AC26, CK24, CK25}). Those local results are substantial towards the understanding of the local Arthur packets and the unitary dual problem for $G$. 

\subsection{Wavefront set and Langlands reciprocity}\label{ssec:WFS-LR}
In our previous paper (\cite{JLZ25}), we assume that $F$ is a local field of characteristic zero. 
In order to discover a practical way to determine the $F$-rational structure of the wavefront sets, based on our previous work (\cite{JZ18}), we developed the theory of local descents for enhanced local $L$-parameters and proposed in \cite{JLZ25} to utilize the local Langlands correspondence and the local Gan-Gross-Prasad conjecture as a bridge to transfer the problem of explicit computation of wavefront sets for a representation $\pi$ to that on the Langlands dual side and consider the structure of the arithmetic wavefront set associated to the enhanced local $L$-parameter of $\pi$, which seems quite productive and fruitful. 

In \cite{JLZ25}, for any classical group $\BG$ defined over $F$ and $G=\BG(F)$, we introduced the notion of {\bf arithmetic wavefront set} $\wf_\ari(\pi)$ 
(\cite[Theorem 4.4]{JLZ25}) for any $\pi\in\Pi_F(G)$ with generic local $L$-parameter (see Section \ref{sec:AW}) and made the following {\bf Wavefront Set Conjecture}.

\begin{conj}[Wavefront Set]\label{conj:wfss}
Let $F$ be a local field of characteristic zero and $G$ be a classical group over $F$. For any $\pi\in\Pi_F(G)$ with a generic local $L$-parameter, the following identities hold:
\begin{align}\label{s-max}
    \wf_\tr(\pi)^{s-\mx}=\wf_\ari(\pi)^{s-\mx}=\wf_\wm(\pi)^{s-\mx},
\end{align}
where $\wf_\square(\pi)^{s-\mx}$ denotes the subset of all maximal members in the wavefront set $\wf_\square(\pi)$ under the $F$-stable topological order over $\CN_F(\Fg)_\circ$
given in Definition \ref{defn:order}. Moreover, the following (stronger) identities also hold:
\begin{align}\label{r-max}
    \wf_\tr(\pi)^{r-\mx}=\wf_\ari(\pi)^{r-\mx}=\wf_\wm(\pi)^{r-\mx},
\end{align}
where $\wf_\square(\pi)^{r-\mx}$ denotes the subset of all maximal members in the wavefront set $\wf_\square(\pi)$ under the $F$-rational topological order over $\CN_F(\Fg)_\circ$.
\end{conj}

To illustrate the spirit of the local Langlands reciprocity in the theory of local descents for both representations $\pi$ of $G(F)$ and their associated enhanced local $L$-parameters we established (joint with C. Chen) in \cite[Theorem 1.4]{CJLZ} the coincidence of the first occurrence indices of the local descents for the representations and their enhanced local $L$-parameters when the local $L$-parameters are generic. In order to understand Conjecture \ref{conj:wfss}, we introduce in this paper a new notion of the wavefront set of $\pi$ when a $\pi\in\Pi_F(G)$ has a generic local $L$-parameter by the local descents. It is called the {\bf descent wavefront set} of $\pi$ and is denoted by $\wf_\des(\pi)$. We refer to Section \ref{sec:AW} for details. The spirit of the local Langlands reciprocity for the wavefront sets is illustrated in the following theorem. 

\begin{thm}[Reciprocity of Wavefront Set] \label{thm:RWS}
Let $F$ be a local field of characteristic zero and $G$ be an $F$-quasisplit classical group or its pure inner form. 
For any $\pi\in\Pi_F(G)$ with a generic enhanced $L$-parameter $(\varphi,\chi)$, the wavefront set $\wf_\des(\pi)$ 
defined by the consecutive descents of $\pi$ and the wavefront set $\wf_\ari(\pi)$ defined by the consecutive descents of $(\varphi,\chi)$ are equal, i.e. 
    \[
    \wf_\des(\pi)=\wf_\ari(\pi).
    \]
\end{thm}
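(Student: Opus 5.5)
We will prove Theorem \ref{thm:RWS} by induction along the chain of consecutive descents, matching the representation side step by step with the parameter side. The key input is the coincidence of first occurrence indices, \cite[Theorem 1.4]{CJLZ}: for $\pi$ with generic enhanced $L$-parameter $(\varphi,\chi)$, the first occurrence index $\ell_0(\pi)$ of the local descent of $\pi$ equals the first occurrence index $\ell_0(\varphi,\chi)$ of the descent of the enhanced parameter. We also need the finer statement, from the local Gan--Gross--Prasad conjecture (now a theorem for generic parameters over all local fields of characteristic zero, in the tempered case by Waldspurger, Beuzart-Plessis, M\oe glin--Waldspurger, Xue, H. Atobe, Luo, Chen et al., extended to generic parameters by Mœglin--Waldspurger and the corresponding archimedean work). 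It says that the first descent $\CD_{\ell_0}(\pi)$, as a representation of the smaller classical groups $H$ (varying over pure inner forms and the relevant admissible orbits), is a multiplicity free sum of the irreducible $\sigma$ whose enhanced parameters $(\phi,\eta)$ lie in the descent $\CD_{\ell_0}(\varphi,\chi)$. Moreover, each such $\phi$ is again generic.

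The steps are as follows. First, recall from Section \ref{sec:AW} that both $\wf_\des(\pi)$ and $\wf_\ari(\pi)$ are built recursively. At each stage one records the first occurrence index $\ell_0$ together with the $F$-rational datum (the $H$-orbit, i.e.\ the anisotropic quadratic or hermitian discriminant data) of the nonzero descent. One then continues with the constituents of the descent until the trivial group is reached. The resulting sequence of partitions and rational data is assembled into an $F$-rational nilpotent orbit via the recipe of \cite[Theorem 4.4]{JLZ25}. Second, at the first step, \cite[Theorem 1.4]{CJLZ} gives equality of the indices $\ell_0$. The local GGP statement identifies the set of pairs (rational datum, irreducible constituent $\sigma$) on the representation side with the set of pairs (rational datum, $(\phi,\eta)$) on the parameter side, via the local Langlands correspondence for the relevant pure inner forms. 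Third, since each $\sigma$ has generic parameter $(\phi,\eta)$, the induction hypothesis applied to the smaller group gives $\wf_\des(\sigma)=\wf_\ari(\phi,\eta)$. Taking the union over the matched pairs and applying the common assembly recipe then yields $\wf_\des(\pi)=\wf_\ari(\pi)$. The base case is the trivial group, or more generally $\ell_0=0$, where both sides are the zero orbit.

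The main obstacle is the second step at the level of rational data. It is not enough that the descent is nonzero for some orbit. We must check that for each $F$-rational orbit of the relevant nilpotent element (equivalently each discriminant/pure inner form of $H$), the descent of $\pi$ is nonzero exactly when the parameter-side descent with that datum is nonempty, and that the constituents correspond exactly. I would first try to deduce this from the refined form of \cite[Theorem 1.4]{CJLZ}, which already tracks the pure inner form via the character $\chi$ restricted to the component group, combined with the multiplicity-one/epsilon-dichotomy form of local GGP. A secondary point is the archimedean case. There one must ensure that the descent of a Casselman--Wallach representation decomposes as a direct sum rather than merely having the right irreducible quotients. I would handle this by defining $\wf_\des$ via irreducible quotients, as is consistent with Section \ref{sec:AW}, so that only the quotient statement of local GGP is needed.
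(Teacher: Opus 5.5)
Your inductive skeleton (match one descent step via local GGP, then apply the induction hypothesis to $\sigma$) is the same as the paper's. As you set it up, however, it proves a weaker statement. You describe both sets as built from first occurrence indices only, and your driving input is the coincidence $\ell_0(\pi)=\ell_0(\varphi,\chi)$ of \cite[Theorem 1.4]{CJLZ} together with the structure of the first descent. In Definition~\ref{dfn:DWS} the level $p_1$ is arbitrary. Condition (1) only asks that $\CJ_{\CO_{p_1,a_1}}(\pi)$ have an irreducible quotient $\sigma$, condition (2) that $\sigma$ have generic parameter, and condition (4) that $p_1\ge p_2$. Likewise, $\wf_\ari(\pi)$ is the set of orbits coming from \emph{all} decreasing tableaux in $\CY_a(\varphi,\chi)$, whose rows come from parameter descents at arbitrary admissible levels. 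Conditions (2) and (4) exist precisely because below the first occurrence, $\CJ_{\CO_{p_1,a_1}}(\pi)$ can have irreducible quotients with non-generic parameters (the non-tempered situation of \cite{GGP2}), and because chains built from arbitrary levels need not be monotone. So your recursion only reaches orbits coming from first-occurrence chains, which is essentially the maximal part of the wavefront set (e.g.\ $\CO(\varphi,\chi)$ in the archimedean discrete case). In addition, your claim that every constituent has generic parameter is a first-occurrence phenomenon that fails at lower levels.

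What is needed is a level-by-level matching, which is what the paper invokes. Fix any admissible $p_1$ (not only $\ell_0$), any $a_1\in\CZ$, and any $\sigma\in\Pi_F(H)$ with generic parameter, where $H$ is the group determined by $(p_1,a_1)$. Then $\Hom_H(\CJ_{\CO_{p_1,a_1}}(\pi),\sigma)\neq 0$ if and only if the corresponding Bessel (resp.\ Fourier--Jacobi) model for the relevant pair $(G,H)$ is nonzero. By local GGP for generic parameters, this holds if and only if the enhanced parameter of $\sigma$ satisfies the distinguished-character condition defining the parameter descent with first row $(p_1,V_{p_1})$. The first-occurrence coincidence of \cite{CJLZ} is then a consequence of this equivalence rather than the engine of the proof. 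With this per-level equivalence, and with the constraint $p_1\ge p_2$ imposed identically on both sides, your induction on $\sigma$ goes through exactly as in the paper.
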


This theorem will be restated as Theorem \ref{thm:des=ari} and proved in Section \ref{sec:AW}. By the composition law of descents as established in Theorem \ref{Thm:CL}, we obtain in Corollary \ref{cor:wf} that 
\begin{equation}\label{des-wm}
 \wf_\des(\pi)\subset\wf_\wm(\pi),\quad {\rm for\ any}\ \pi\in\Pi_F(G), 
\end{equation} 
with a generic $L$-parameter and over all local fields $F$ of characteristic zero. 
Conjecture \ref{conj:wfss} naturally leads to the following assertion.

\begin{conj}[Descent and Generalized Whittaker]\label{conj:DDW}
Let $F$ be a local field of characteristic zero and $G$ be a classical group over $F$. For any $\pi\in\Pi_F(G)$ with a generic local $L$-parameter, the following identities hold:
\begin{align*}
    \wf_\des(\pi)^{s-\mx}=\wf_\wm(\pi)^{s-\mx} \quad {\rm and}\quad \wf_\des(\pi)^{r-\mx}=\wf_\wm(\pi)^{r-\mx},
\end{align*}
where the notations take the same meaning as in Conjecture \ref{conj:wfss}.
\end{conj}

It is worthwhile to point out that Conjecture \ref{conj:DDW} is of representation-theoretic nature and is expected to be proved through an approach based on representation theory and combinatorics. It is even more mysterious if one considers the relations between $\wf_\des(\pi)$ and $\wf_\wm(\pi)$ for the boundary nilpotent orbits of the maximal ones. 

In Section \ref{ssec-MRR}, we explain the indispensable roles played by this reciprocity of wavefront sets (Theorem \ref{thm:RWS}) in our approach to the Wavefront Set Conjecture (Conjecture \ref{conj:wfss}) in the archimedean case.

\subsection{Main results over $\BR$}\label{ssec-MRR}
As indicated in our previous work (\cite{JLZ25}), the arithmetic wavefront set $\wf_\ari(\pi)$ for 
$\pi\in\Pi_F(G)$ with a generic local $L$-parameter is defined by means of the consecutive descents of the enhanced local Langlands datum $(\varphi,\chi)$ associated with $\pi$, based on the $F$-rational structure of the local Langlands correspondence and the local Gan-Gross-Prasad conjecture. The $F$-rational structure of $\wf_\ari(\pi)$ is expected to be completely determined and explicitly computed by the arithmetic information of $(\varphi,\chi)$. This expectation was completely realized in \cite[Theorem 9.2]{JLZ25} for all classical groups over the archimedean local fields. For later use, we record the following consequence of \cite[Theorem 9.2]{JLZ25} and \cite[Theorems 1.4, 7.14]{JLZ25}.

\begin{thm}[Arithmetic Wavefront Set over $\BR$]\label{FRS-WFS}
    Let $F$ be an archimedean local field and let $(\varphi,\chi)$ be an enhanced local $L$-parameter with $\varphi$ generic for a classical group $G$ over $F$. Assume that $\pi\in\Pi_F(G)$ is associated with $(\varphi,\chi)$. 
    \begin{enumerate}
        \item If the component group $\CS_\varphi$ associated with $\varphi$ is trivial, then $\wf_\ari(\pi)^{r-\mx}$ consists of all 
        $F$-rational regular nilpotent orbits in $\CN_F(\Fg)_\circ$.
        \item If the component group $\CS_\varphi$ associated with $\varphi$ is non-trivial, then $\wf_\ari(\pi)^{r-\mx}=\{\CO(\pi)\}$, where $\CO(\pi)$ is the unique $F$-rational nilpotent orbit that can be explicitly and effectively determined by $(\varphi,\chi)$ via the $L$-parameter descent method. 
        \item Every $F$-rational nilpotent orbit $\CO$ in $\wf_\ari(\pi)^{r-\mx}$ is $F$-distinguished in the sense that $\CO$ does not meet any $F$-rational proper Levi subalgebra of the Lie algebra $\Fg$ of $G$.
    \end{enumerate}
\end{thm}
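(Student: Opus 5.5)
The statement is recorded as a consequence of \cite[Theorem 9.2]{JLZ25} together with \cite[Theorems 1.4, 7.14]{JLZ25}, so the plan is to unwind the definition of $\wf_\ari(\pi)$ and then quote those results.

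First I recall the definition. By \cite[Theorem 4.4]{JLZ25}, $\wf_\ari(\pi)$ is the set of $F$-rational nilpotent orbits obtained from the \emph{consecutive} descents of $(\varphi,\chi)$. At each step one takes the first occurrence index of the local descent of the current enhanced parameter. One records the resulting partition block together with the $F$-rational datum, namely the quadratic form class, determined by the descended character. One then passes to the descended parameter, which is again generic by the local Gan--Gross--Prasad setup, and repeats. The maximal members $\wf_\ari(\pi)^{r-\mx}$ are the orbits produced by the choices in this process that survive under the $F$-rational topological order of Definition \ref{defn:order}.

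For part (1), when $\CS_\varphi$ is trivial, $\varphi$ is a generic parameter whose packet detects genericity for every Whittaker datum. By \cite[Theorem 1.4]{JLZ25}, the first occurrence index of the descent is then maximal at each stage. The consecutive descents therefore realize the regular partition, with every compatible $F$-rational form allowed, so $\wf_\ari(\pi)^{r-\mx}$ is the full set of $F$-rational regular orbits.

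For part (2), when $\CS_\varphi$ is nontrivial, the character $\chi$ pins down at each stage a unique descended enhanced parameter of maximal index. This is the archimedean explicit computation of \cite[Theorem 9.2]{JLZ25}, where over $\BR$ or $\BC$ the parameter decomposes into characters and two-dimensional pieces, and $\chi$ is read off from signs. Induction on the rank then gives a single orbit $\CO(\pi)$, and \cite[Theorem 7.14]{JLZ25} supplies its effective determination from $(\varphi,\chi)$.

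For part (3), I would observe that the partitions produced have the shape forced by descents of generic parameters. Every part of the relevant parity occurs with multiplicity compatible with being $F$-distinguished, with the quadratic-form labels anisotropic enough that the orbit meets no rational proper Levi. Concretely, I would check the criterion that an orbit meets a proper Levi exactly when its $\mathfrak{sl}_2$-centralizer contains a nontrivial split torus, and verify that the centralizer is anisotropic from the explicit signature data.

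I expect the main obstacle to be part (3): the distinguishedness criterion must be matched with the explicit rational labels, case by case over the classical group types. I would first try to reduce it to the statement in \cite[Theorem 9.2]{JLZ25} itself, if that statement already includes distinguishedness.
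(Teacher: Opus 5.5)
Your plan is the paper's own route: the statement is not proved here but recorded as a consequence of \cite[Theorems 1.4, 7.14, 9.2]{JLZ25}, and part (3) needs no case-by-case check, since the paper takes the distinguishedness of $\CO(\pi)$ from \cite{JLZ25} (Theorem 9.2, and Proposition 7.13 as invoked in Section \ref{sec:AWFAV}), while regular orbits are distinguished even over $\ol{F}$. Do not rely on your gloss of (2), however: $\CD_{p_1}(\varphi,\chi)$ is a family of parameters indexed by the interlacing choices $\beta_j$ in \eqref{phi}, not a single parameter (only $(p_1,\epsilon_1)$ and the signs of $\chi'$ are determined, and the later rows are independent of the choice), and $\wf_\ari(\pi)$, like $\wf_\des(\pi)$ in Definition \ref{dfn:DWS}, also contains orbits from chains that do not use first-occurrence indices, so the singleton assertion is a genuine theorem of \cite{JLZ25} and does not follow from your induction.
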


It is important to mention that over a non-archimedean local field $F$, the $F$-rational structures of the arithmetic wavefront set $\wf_\ari(\pi)$ can be explicitly computed for various lower rank cases, however, it is more complicated as indicated in \cite{T24, T25, JLZ25} for instance. It is a subject matter of our ongoing project. We refer to \cite{JLZ25} for some relevant discussions. 

The objective of this paper is to prove a large part of Conjecture \ref{conj:wfss} (the Wavefront Set Conjecture) for all classical groups over archimedean local fields. When $F=\BC$, the field of complex numbers, Conjecture \ref{conj:wfss} holds trivially, and hence for the archimedean case considered in this paper we assume that $F=\BR$, the field of real numbers. Here is the main result of this paper (Theorem \ref{Thm:WFS}).

\begin{thm} 
\label{thm:wfsc}
Let $\BG$ be a classical group defined over $\BR$, and denote by $G$ the group $\BG(\BR)$ of $\BR$-rational points or its metaplectic double cover when $\BG$ is symplectic. 
Assume that $\pi\in\Pi_\BR(G)$ has a generic local $L$-parameter $\varphi$ under the local Langlands correspondence. 
If the component group $\CS_\varphi$ associated with $\varphi$ is trivial, then Conjecture \ref{conj:wfss} holds. If the component group $\CS_\varphi$ is non-trivial, then the following relations  
\[
\wf_\tr(\pi)^{r-\mx}=\wf_\ari(\pi)^{r-\mx} \subset \wf_\wm(\pi)^{s-\mx}
\]
hold. Moreover, the $\BR$-rational nilpotent orbits in $\wf_\wm(\pi)^{s-\mx}$ lie in a single stable nilpotent orbit. 
\end{thm}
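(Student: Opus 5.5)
The plan is to combine three ingredients: the reciprocity $\wf_\des(\pi)=\wf_\ari(\pi)$ of Theorem \ref{thm:RWS}; the inclusion $\wf_\des(\pi)\subset\wf_\wm(\pi)$ from the composition law (Theorem \ref{Thm:CL}, Corollary \ref{cor:wf}); and known archimedean results relating $\wf_\tr$ to associated varieties and to generalized Whittaker models. From the first two we immediately get
\[
\wf_\ari(\pi)\subset\wf_\wm(\pi).
\]
By Theorem \ref{FRS-WFS}(3), every orbit in $\wf_\ari(\pi)^{r-\mx}$ is $\BR$-distinguished. For classical groups, such orbits have partitions with all parts of one parity and multiplicity-one type (or the analogous shape). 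The key claim I would establish is that such an orbit cannot lie strictly below any orbit of $\wf_\wm(\pi)$ in the stable order. Heuristically this is because $\wf_\wm(\pi)$ is contained in the closure of the complex associated variety, whose dimension is controlled by the Gelfand--Kirillov dimension.

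\textbf{Trivial $\CS_\varphi$.} By Theorem \ref{FRS-WFS}(1), $\wf_\ari(\pi)^{r-\mx}$ consists of all real regular nilpotent orbits. In particular $\pi$ must be large, and I would argue that a generic $L$-parameter with trivial component group gives a generic (Whittaker) representation for every Whittaker datum. The needed identity $\wf_\ari(\pi)^{r-\mx}=\wf_\tr(\pi)^{r-\mx}$ then reduces to showing that $\Theta_\pi$ has nonzero leading coefficient on every regular orbit. Here I would use Matumoto's theorem (\cite{Mat87,Mat90}), which relates Whittaker models to the principal-orbit coefficients of the character expansion. The other inclusions come from the displayed containment above, since regular orbits are maximal in both orders. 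This yields both \eqref{s-max} and \eqref{r-max}.

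\textbf{Non-trivial $\CS_\varphi$.} Here $\wf_\ari(\pi)^{r-\mx}=\{\CO(\pi)\}$.
\begin{enumerate}
\item For the upper bound on $\wf_\tr(\pi)$, I would use that $\wf_\tr(\pi)$ equals the real form of the associated variety (Barbasch--Vogan, Schmid--Vilonen). I would compute its complex orbit via the Adams--Vogan description for the generic parameter. Since $\pi$ is essentially parabolically induced from a tempered representation of a Levi subgroup, this amounts to computing the induced orbit from the tempered piece. The result should be the stable orbit $\CO(\pi)_{\BC}$ produced by the descent algorithm.
\item For the real form, I would match $\CO(\pi)$ with the unique real orbit in the wavefront cycle, using Vogan's result that the wavefront cycle is determined by the $K$-orbits in the associated variety. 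The real-form data would be read off from $\chi$ via the descent calculation in \cite[Theorem 9.2]{JLZ25}.
\item Knowing that $\CO(\pi)$ lies in $\wf_\wm(\pi)$, I would show $\CO(\pi)\in\wf_\wm(\pi)^{s-\mx}$. For this I would invoke the archimedean result (\cite{GS15,GGS17}) that any orbit supporting a generalized Whittaker model lies in the closure of the complex associated variety. Any element of $\wf_\wm(\pi)$ is then stably bounded by $\CO(\pi)_\BC$, and those equal to it stably form the final statement, lying in a single stable orbit.
\end{enumerate}

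\textbf{Main obstacle.} I expect step (1)/(2) to be the hardest: identifying the real orbit in $\wf_\tr(\pi)^{r-\mx}$ with $\CO(\pi)$, rather than merely matching stable classes. My first attempt would be to realize $\pi$ as a Langlands quotient of a standard module. I would use induction-in-stages for wavefront cycles (Barbasch--Vogan induction of characters) from the tempered part, for which the descent and local Gan--Gross--Prasad data give the signature of $\CO(\pi)$ directly. I would then check that the leading term survives passage to the quotient, because the other constituents have smaller Gelfand--Kirillov dimension or differ only in lower orbits.
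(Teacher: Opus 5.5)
\textbf{There is a genuine gap: the nontrivial-$\CS_\varphi$ case rests on $\AV(\pi)=\overline{\CO(\varphi,\chi)_{\fpp}}$ (Theorem \ref{thm:AV}), and your plan never proves it.}

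Much of your plan agrees with the paper. Reciprocity plus the composition law gives $\CO(\varphi,\chi)\in\wf_\wm(\pi)$. Matumoto's containment $\wf_\wm(\pi)\subset\CV({\rm Ann}\,\pi)$, together with Vogan's description of $\CV({\rm Ann}\,\pi)$ as the closure of a single stable orbit $\CO^{\st}(\pi)$, then gives the $s$-maximality and the single-stable-orbit statement. Your trivial-$\CS_\varphi$ case is also fine.

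Here is where each step of the nontrivial case falls short:
\begin{itemize}
\item In steps (1)--(2), the induced complex orbit ``should be'' $\CO(\pi)$, and the real form ``would be read off from $\chi$ via \cite[Theorem 9.2]{JLZ25}''. But that theorem only computes $\wf_\ari(\pi)$ from $(\varphi,\chi)$. It says nothing about $\CK$-orbits in $\AV(\pi)$ or about $\Theta_\pi$.
\item Your fallback in the ``main obstacle'' paragraph is circular. For the tempered piece, descent and Gan--Gross--Prasad data only give non-vanishing of generalized Whittaker models. Over $\BR$ there is no general theorem that moves a non-principal orbit from $\wf_\wm(\pi)$ into $\wf_\tr(\pi)$; that is exactly the open comparison the paper routes around via $\AV(\pi)$.
\item Step (3) inherits the gap. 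To know that every orbit of $\wf_\wm(\pi)$ is stably bounded by $\CO(\pi)$, you need $\CO^{\st}(\pi)=\CO(\pi)^{\st}$, i.e.\ the Gelfand--Kirillov dimension of $\pi$. That again comes only from computing the associated variety; distinguishedness of $\CO(\pi)$ contributes nothing to your opening ``key claim''.
\end{itemize}

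\textbf{What is missing} is an explicit identification of the $\CK$-orbit in $\AV(\pi)$ in terms of $(\varphi,\chi)$, matched against the descent algorithm. The paper does this in three steps.
\begin{enumerate}
\item \emph{Reduction to discrete series.} $\pi(\varphi,\chi)$ is an irreducible induction from $\pi(\varphi_{\rm gp},\chi)$. Coherent continuation to a discrete parameter $\varphi'$ with $\chi'=\chi\circ p$ preserves $\AV$ \cite{V79}. Since $\CO(\varphi_{\rm gp},\chi)$ is distinguished, its Lusztig--Spaltenstein induction with the regular orbit of $\frak{gl}_t$ is $\CO(\varphi,\chi)$, and \cite{BB} then handles the induced case.
\item \emph{Discrete series.} The paper uses Adams' explicit correspondence $\tau_x\mapsto\pi_x(\lambda)$. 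This converts $\chi$ into the sign sequence $\delta$ of the $\theta$-stable Borel (e.g.\ $\delta_i=(-1)^{n-i}\chi_i$ for unitary groups). It then applies Trapa's column-addition formula for $\CK(\overline{\fuu}\cap\fpp)$ (Theorem \ref{thm:TrapaYoung}).
\item \emph{Matching.} Trapa's formula is rewritten as a recursion: a first row plus an HC-descent $\delta'$. One then checks family by family that the first row has length $p_1$ and last sign $\epsilon_1$ as in Table \ref{tab:descent}, and that $\delta'$ corresponds to the $L$-descent $\chi'$ under the same dictionary.
\end{enumerate}

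Only after this does \cite{SV00} give $\wf_\tr(\pi)^{r-\mx}=\{\CO(\varphi,\chi)\}$, and \cite{V91} give $\CO(\varphi,\chi)\subset\CO^{\st}(\pi)$. Without this, or some substitute explicit computation of the $\CK$-orbit from $(\varphi,\chi)$, your plan yields neither $\wf_\tr(\pi)^{r-\mx}=\wf_\ari(\pi)^{r-\mx}$ nor the inclusion into $\wf_\wm(\pi)^{s-\mx}$.
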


With Theorem \ref{thm:wfsc}, the analytic wavefront set $\wf_\tr(\pi)^{r-\mx}$ can be computed explicitly and effectively, and produces members of $\wf_\tr(\pi)^{r-\mx}$ through the arithmetic wavefront set $\wf_\ari(\pi)^{r-\mx}$ by means of the enhanced local $L$-parameter $(\varphi,\chi)$ of $\pi$,  for a large family of $\pi\in\Pi_F(G)$, i.e. for $\pi$ having a generic local $L$-parameter. Hence the microlocal structure of the Harish-Chandra distribution character $\Theta_\pi$ can be completely determined by the arithmetic information associated with $\pi$ via the local Langlands reciprocity. We strongly believe that with a full development of the compatibility theory of the local descents and the local Langlands conjecture, this arithmetic approach to determine the wavefront sets $\wf_\wm(\pi)^{r-\mx}$ 
and $\wf_\tr(\pi)^{r-\mx}$ could be extended to the general situation. 

By the reciprocity of wavefront sets (Theorem \ref{thm:RWS}), we obtain the Wavefront Set Conjecture 
(Conjecture \ref{conj:wfss}) if we assume that Conjecture \ref{conj:DDW} holds. 

\begin{cor}\label{cor:WFC}
    Let $\BG$ be a classical group defined over $\BR$ and denote by $G=\BG(\BR)$ the $\BR$-rational points or its metaplectic double cover when $\BG$ is symplectic. 
Assume that $\pi\in\Pi_\BR(G)$ has a generic local $L$-parameter $\varphi$ under the local Langlands correspondence. Then Conjecture \ref{conj:DDW} implies Conjecture \ref{conj:wfss}. 
\end{cor}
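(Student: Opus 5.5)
The plan is to combine Theorem \ref{thm:wfsc}, the reciprocity Theorem \ref{thm:RWS} ($\wf_\des(\pi)=\wf_\ari(\pi)$) and Conjecture \ref{conj:DDW}. We split into the two cases of Theorem \ref{thm:wfsc} according to whether $\CS_\varphi$ is trivial. If $\CS_\varphi$ is trivial, Theorem \ref{thm:wfsc} already gives Conjecture \ref{conj:wfss}, so nothing needs proving. We may therefore assume $\CS_\varphi$ is non-trivial.

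In this case, Theorem \ref{thm:wfsc} gives $\wf_\tr(\pi)^{r-\mx}=\wf_\ari(\pi)^{r-\mx}$. Theorem \ref{thm:RWS} gives $\wf_\ari(\pi)=\wf_\des(\pi)$. Assuming Conjecture \ref{conj:DDW}, $\wf_\des(\pi)^{r-\mx}=\wf_\wm(\pi)^{r-\mx}$. Chaining these yields the rational identity \eqref{r-max}.

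For the stable identity \eqref{s-max}, the same chain gives
\[
\wf_\ari(\pi)^{s-\mx}=\wf_\des(\pi)^{s-\mx}=\wf_\wm(\pi)^{s-\mx},
\]
where the first equality uses Theorem \ref{thm:RWS} and the second uses Conjecture \ref{conj:DDW}. It remains to show $\wf_\tr(\pi)^{s-\mx}=\wf_\ari(\pi)^{s-\mx}$, which is the main step, since Theorem \ref{thm:wfsc} only compares $r$-maximal members.

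To do this, we first use Theorem \ref{FRS-WFS}(2): $\wf_\ari(\pi)^{r-\mx}=\{\CO(\pi)\}$ is a single orbit. We then need that $s$-maximal members lie among the $r$-maximal ones, for both $\wf_\ari(\pi)$ and $\wf_\tr(\pi)$. I would argue this from Definition \ref{defn:order}: the $F$-stable order is coarser than the rational one, so an $s$-maximal orbit cannot be strictly dominated rationally by another member. Hence $\square^{s-\mx}\subset\square^{r-\mx}$ for $\square\in\{\tr,\ari\}$.

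Next, $\wf_\tr(\pi)^{r-\mx}=\{\CO(\pi)\}$ is a singleton, so it is nonempty. Since $\wf_\tr(\pi)$ is finite, its $s$-maximal set is also nonempty, and therefore $\wf_\tr(\pi)^{s-\mx}=\{\CO(\pi)\}$. The same reasoning gives $\wf_\ari(\pi)^{s-\mx}=\{\CO(\pi)\}$.

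The main obstacle is confirming that the stable order in Definition \ref{defn:order} really satisfies this coarsening property. That is, rational closure containment should imply stable closure containment, and two distinct rational orbits in the same stable orbit should remain incomparable in the stable order. If this fails, I would instead use the last assertion of Theorem \ref{thm:wfsc}, that $\wf_\wm(\pi)^{s-\mx}$ lies in one stable orbit, together with $\CO(\pi)\in\wf_\wm(\pi)^{s-\mx}$, to pin down the stable orbit directly.
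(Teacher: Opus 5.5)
Your proof is correct and takes essentially the paper's route, which obtains the corollary by chaining Theorem \ref{thm:wfsc}, the reciprocity $\wf_\des(\pi)=\wf_\ari(\pi)$ of Theorem \ref{thm:RWS}, and Conjecture \ref{conj:DDW}. Your additional step, deducing $\wf_\tr(\pi)^{s-\mx}=\wf_\ari(\pi)^{s-\mx}=\{\CO(\pi)\}$ from the fact that $s$-maximal members are $r$-maximal, is left implicit in the paper; the coarsening property you flag does hold, because distinct real orbits in one stable orbit are open and closed in its real points and hence incomparable under $\leq_F$.
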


Combining Theorems \ref{FRS-WFS} and \ref{thm:wfsc}, we obtain an arithmetic refinement of Vogan's maximal-orbit principle.  In the generic real classical case considered here, the maximal analytic wavefront support is determined by the arithmetic wavefront set attached to the enhanced local $L$-parameter.  In particular, in the nontrivial component-group case, the maximal analytic wavefront orbit is a single real rational nilpotent orbit, and this orbit is computed explicitly from the enhanced local $L$-parameter.


\begin{thm}[An arithmetic refinement of Vogan's maximal-orbit principle]\label{thm:VC}
Let $G$ be a classical group over $\BR$ as in Theorem \ref{thm:wfsc} and $(\varphi,\chi)$ be an enhanced local $L$-parameter for $G$ with $\varphi$ generic. Then a refinement of the Vogan conjecture holds for the representation $\pi\in\Pi_\BR(G)$ associated with $(\varphi,\chi)$. More precisely, if the component group 
$\CS_\varphi$ is trivial, then $\wf_\wm(\pi)^{r-\mx}$ and $\wf_\tr(\pi)^{r-\mx}$ consist of all $\BR$-rational regular nilpotent orbits in $\CN_F(\Fg)_\circ$; and if the component group 
$\CS_\varphi$ is non-trivial, then $\wf_\tr(\pi)^{r-\mx}$ consists of a unique 
$\BR$-rational $\BR$-distinguished nilpotent orbit $\CO(\pi)$ that is contained in $\wf_\wm(\pi)^{s-\mx}$ and can be explicitly and effectively determined by $(\varphi,\chi)$ via the $L$-parameter descent method. 
\end{thm}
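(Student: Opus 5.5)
Theorem \ref{thm:VC} follows by combining Theorem \ref{FRS-WFS} with Theorem \ref{thm:wfsc}, splitting according to whether $\CS_\varphi$ is trivial. All the analytic and representation-theoretic content is already in Theorem \ref{thm:wfsc}. What remains is to transport the explicit description of $\wf_\ari(\pi)^{r-\mx}$ across the identities proved there.

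\emph{Trivial $\CS_\varphi$.} Here Theorem \ref{thm:wfsc} gives the full Conjecture \ref{conj:wfss}, in particular the $r$-max identities \eqref{r-max}:
\[
\wf_\tr(\pi)^{r-\mx}=\wf_\ari(\pi)^{r-\mx}=\wf_\wm(\pi)^{r-\mx}.
\]
By Theorem \ref{FRS-WFS}(1), the middle term is the set of all $\BR$-rational regular nilpotent orbits in $\CN_F(\Fg)_\circ$. This gives the first assertion for both $\wf_\wm(\pi)^{r-\mx}$ and $\wf_\tr(\pi)^{r-\mx}$.

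\emph{Non-trivial $\CS_\varphi$.} Theorem \ref{thm:wfsc} gives
\[
\wf_\tr(\pi)^{r-\mx}=\wf_\ari(\pi)^{r-\mx}\subset\wf_\wm(\pi)^{s-\mx}.
\]
By Theorem \ref{FRS-WFS}(2), $\wf_\ari(\pi)^{r-\mx}=\{\CO(\pi)\}$, where $\CO(\pi)$ is the orbit determined explicitly and effectively by $(\varphi,\chi)$ through the $L$-parameter descent method. Hence $\wf_\tr(\pi)^{r-\mx}=\{\CO(\pi)\}$, and the inclusion shows $\CO(\pi)\in\wf_\wm(\pi)^{s-\mx}$. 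Theorem \ref{FRS-WFS}(3) then says that $\CO(\pi)$ meets no $\BR$-rational proper Levi subalgebra, i.e.\ it is $\BR$-distinguished.

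The argument is a direct assembly with no real obstacle. The only point to check is the case $G$ metaplectic: Theorem \ref{FRS-WFS} is stated for classical groups, while Theorem \ref{thm:wfsc} covers metaplectic double covers. I would verify that the arithmetic wavefront set and its explicit description in \cite[Theorem 9.2]{JLZ25} are set up for $\Mp$ as well, so that both inputs apply to the same class of groups as in Theorem \ref{thm:wfsc}.
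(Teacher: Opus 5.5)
Your proposal is correct and follows the paper's route exactly: the paper obtains Theorem \ref{thm:VC} by combining Theorem \ref{FRS-WFS} with Theorem \ref{thm:wfsc}, splitting on whether $\CS_\varphi$ is trivial, just as you do. Your metaplectic check is already covered by the paper's standing convention that $G$ may be $\Mp(V)$, and \cite[Theorem 9.2]{JLZ25} and Table \ref{tab:descent} both include $\Mp_{2n}(\BR)$.
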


Let us also point out that this maximal-orbit principle is a feature of the real-group setting and should not be interpreted as a literal statement over all local fields.  In the $p$-adic case, Tsai has shown that the geometric wavefront set need not be a singleton \cite{T24}.  This does not contradict the orbit method philosophy; rather, it indicates that the $p$-adic form of the philosophy is more genuinely arithmetic.  The leading asymptotic behavior of the Harish-Chandra distribution character $\Theta_\pi$ for some $\pi\in\Pi_F(G)$ is still governed by nilpotent orbital data, as in the local character expansion and in the relation with generalized Whittaker models, but the passage from rational nilpotent orbits to geometric nilpotent orbits need not collapse to a single maximal orbit. Thus the correct $p$-adic analogue of the orbit method should allow a finite set of maximal orbits, reflecting the $p$-adic nature of the rational structures associated with representations $\pi\in\Pi_F(G)$. 
From this viewpoint, Tsai's examples refine rather than refute the orbit method picture: they show that over non-archimedean fields the leading nilpotent support is orbital but not necessarily geometrically irreducible.

\quad

The idea of the proof of Theorem \ref{thm:wfsc} can be briefly outlined as follows. As mentioned earlier, the general comparison between $\wf_\wm(\pi)$ and $\wf_\tr(\pi)$ in the archimedean case is a long-standing open problem. The work of W. Schmid and K. Vilonen (\cite{SV00}) compares wavefront cycles with associated cycles, and thereby relates $\wf_\tr(\pi)$ to the associated variety
${\rm AV}(\pi)$ of $\pi$. The arithmetic wavefront set $\wf_\ari(\pi)$ introduced in \cite{JLZ25} builds up an important bridge connecting $\wf_\wm(\pi)$ and $\wf_\tr(\pi)$, which at the same time is also tied to ${\rm AV}(\pi)$. 
As we explained above, the arithmetic wavefront set $\wf_\ari(\pi)$ introduced in \cite{JLZ25} is defined by the consecutive $L$-parameter descents from the enhanced local $L$-parameter $(\varphi, \chi)$ of $\pi$ and hence is an arithmetic object. In Definition \ref{dfn:DWS}, we introduce the descent wavefront set $\wf_\des(\pi)$ of $\pi$ on the representation side and prove in Theorem \ref{thm:des=ari} the reciprocity of wavefront sets:  $\wf_\des(\pi)=\wf_\ari(\pi)$ holds for any $\pi\in\Pi_F(G)$ with a generic $L$-parameter. 
Then we prove a composition law (Theorem \ref{Thm:CL}) for generalized Whittaker models using a generalized root exchange lemma (Lemma \ref{lem:exchange}), which implies that (Corollary \ref{cor:wf})
\be \label{subset}
\wf_\ari(\pi)\subset \wf_\wm(\pi).
\ee
 We emphasize that the results in Section \ref{sec:WS} and Section \ref{sec:CL} hold for arbitrary local fields of characteristic zero,
and also have applications for the wavefront sets in the non-archimedean case. 


 By comparing the algorithm for $\wf_\ari(\pi)^{r-\mx}$ in terms of enhanced $L$-parameters given in \cite{JLZ25} with the algorithm for associated varieties of $A_\Fq(\lambda)$ modules in \cite{Tr05},  we show that (Theorem \ref{thm:AV})
\be \label{AV}
    {\rm AV}(\pi) = \bigcup_{\CO\in \wf_\ari(\pi)^{r-\mx}}\overline{\CO_{\mathfrak p}},
\ee
    where $\CO_{\mathfrak p}$ denotes the image of $\CO$ under the Kostant-Sekiguchi correspondence. 
Combined with the fundamental result of Schmid and Vilonen (\cite{SV00}), it follows that (Corollary \ref{AWF=AV})
    \[
    \wf_\tr(\pi)^{r-\mx}=\wf_\ari(\pi)^{r-\mx}.
    \]
    
    Let $\CV({\rm Ann}\,\pi)\subset \frak g_\BC$ be the associated variety of the annihilator ideal in $U(\frak g_\BC)$ of 
    $\pi$, where $\frak g_\BC\cong \Fg_\BC^*$ is the complexification of $\Fg$ with Cartan decomposition 
    $\frak g_\BC =\frak k \oplus \frak p$. A result of H. Matumoto (\cite[Corollary 4]{Mat87}) shows that the members of $\wf_\wm(\pi)$ all lie in 
    $\CV({\rm Ann}\,\pi)$.
By \cite[Corollary 4.7, Theorem 8.4]{V91}, $\CV({\rm Ann}\,\pi)$ is the Zariski closure of a single stable nilpotent orbit $\CO^{\rm st}(\pi)$
and 
\[
\av(\pi) = \overline{\CO^{\rm st}(\pi)\cap \av(\pi)} \subset \CV({\rm Ann}\,\pi)  \cap \frak p.
\]
From above discussions and \eqref{subset}, \eqref{AV}, it follows that
    \[
    \wf_\ari(\pi)^{r-\mx} \subset \wf_\wm(\pi)^{s-\mx},
    \]
    which completes the proof of Theorem \ref{thm:wfsc}.

In summary, the strategy of the proof can be roughly depicted as follows:
\[
\xymatrix{
& \wf_\des(\pi)=\wf_\ari(\pi) \ar@{-->}[dl]_{\text{Composition Law \ }}  \ar@{<->}[dd]^{\text{Algorithms}} \ar@{<-->}[dr] & \\
\wf_\wm(\pi) \ar[dr]_{\text{Matumoto, Vogan \ }} & & \wf_\tr(\pi) \\
& \mathrm{AV}(\pi) \ar@{<->}[ur]_{\text{ \ Schmid-Vilonen}} & 
}
\]


\quad

This paper is organized as follows. In Section \ref{sec:WS} we formulate the classification of nilpotent orbits and various versions of wavefront sets for classical groups.
In Section \ref{sec:CL} we present a generalized  root exchange lemma and use it to prove a composition law for generalized Whittaker models of classical groups over any local field of characteristic zero. We explicate the arithmetic wavefront sets for real classical groups based on our previous work \cite{JLZ25} in Section \ref{sec:AWF},
and explain the local Langlands correspondence and associated varieties for discrete series in Section \ref{sec:LLC}.
 The comparison between arithmetic wavefront sets and associated varieties is established in Section \ref{sec:AWFAV}. Combining these results, we prove the main theorem 
 in Section \ref{sec:PF}.

\section{Wavefront Sets of Classical Groups} \label{sec:WS}

In this section, we formulate the classification of nilpotent orbits and various versions of wavefront sets for classical groups, in order to make the paper self-contained. Let $F$ be a local field of characteristic zero as before. 
Let $F(\bm{\delta})$ be a quadratic extension of $F$, where $\bm{\delta}=\sqrt{d}$ for a non-square element $d\in F^\times$. Assume that 
$E=F$ or $F(\bm{\delta})$, and that $V$ is an $\epsilon$-Hermitian vector space of dimension $\Fn$ over $E$, where $\epsilon\in \{\pm 1\}$. 
Let  
\[
\BG(V):=\Isom(V)^\circ, \quad (\text{$\circ$ indicates the identity component)}
\]
and denote by $G(V)$ the $F$-rational points of $\BG(V)$. 

In this paper, we assume that $G$ equals either $G(V)$ or its metaplectic double cover $\Mp(V)$ when $E=F$ and $\epsilon=-1$.  The Lie algebra of 
$G$ is denoted by $\Fg =\Fg(V)$, and similar notation may be used without further explanation.  
Let $\CN_F(\Fg)$ be the set of all nilpotent elements in the Lie algebra $\Fg$ and $\CN_F(\Fg)_\circ$ be the set of $F$-rational nilpotent orbits under the adjoint action of $G$. 
For a nilpotent element $f\in \CN_F(\Fg)$, write $\CO_f\in \CN_F(\Fg)_\circ$ for the $F$-rational nilpotent orbit containing $f$. 

\subsection{Rational nilpotent orbits}\label{ssec:RNO}
It is a classical theorem that there are only finitely many nilpotent orbits of $\Fg$ over the algebraic closure $\ol{F}$ of $F$ by certain partitions determined by $\Fg$ (\cite{CM}). For the $F$-rational nilpotent orbits in $\Fg$, we follow \cite{GZ} to parametrize the set $\CN_F(\Fg)_\circ$ by sesquilinear Young tableaux. For $k\geq 1$, denote by $S_k$ the $k$-dimensional irreducible $\frak{sl}_2$-module over $E$, equipped with a fixed 
invariant $(-1)^{k-1}$-Hermitian form. When $k$ is odd, we require that the discriminant of this $(-1)^{k-1}$-Hermitian form equals $1$.
Let 
\[
\udl{p} = [p_1^{m_1}, \ldots, p_r^{m_r}]
\]
be a partition of $\Fn=\dim_E V$, where $p_1>\cdots>p_r>0$ and $m_1,\ldots, m_r\geq 0$, subject to the parity conditions as in \cite{CM}.  Let $V_{p_i}$ be an $m_i$-dimensional $(-1)^{p_i-1}\epsilon$-Hermitian space over $E$, where $i=1,\ldots, r$. The datum
$
\left(\udl{p}, \{V_{p_i}\}\right)
$
is called a sesquilinear Young tableau of sign $\epsilon$. In particular if $r=1$, the sesquilinear Young tableau 
is simply written as 
\be \label{onepart}
([p_1^{m_1}], \{V_{p_1}\}) = (p_1^{m_1}, V_{p_1}).
\ee

If the $\epsilon$-Hermitian vector space $V$ of dimension $\Fn$ over $E$ has an isometry
\be \label{isom}
V\cong \bigoplus^r_{i=1} S_{p_i}\otimes V_{p_i}, \quad (\text{the orthogonal direct sum})
\ee
then we say the sesquilinear Young tableau $(\udl{p}, \{V_{p_i}\})$ is {\it admissible} for $V$. By \cite{GZ}, there is a one-to-one correspondence between the 
$F$-rational nilpotent orbits in $\Fg$ and the equivalence classes of admissible sesquilinear Young tableaux for $V$, 
except that when $G$ is an even special orthogonal group,  two $F$-rational nilpotent orbits may correspond to an admissible sesquilinear Young tableau with only even parts. See \cite[Section 2]{JLZ25} for more details.\footnote{A corrigendum: in \cite{JLZ25}, when $p_i$ is odd the sign $(-1)^{\frac{p_i-1}{2}}$ in (2.10) should be placed in the right hand side of (2.9) instead. Then the isometry (2.11) and all the subsequent calculations remain correct.}

If $\udl{p}' = [p_1^{m_1'}, \ldots, p_r^{m_r'}]$  and $\left(\udl{p}', \{V'_{p_i}\}\right)$ is 
an admissible sesquilinear Young tableau for another $\epsilon$-Hermitian space $V'$ over $E$, we define the direct sum
\be \label{directsum}
\left(\udl{p}, \{V_{p_i}\}\right) \oplus \left(\udl{p}', \{V'_{p_i}\}\right):= \left(\udl{p}\cup \udl{p}', \{V_{p_i}\oplus V'_{p_i}\}\right),
\ee
where $\udl{p}\cup \udl{p}' := [p_1^{m_1+m_1'},\ldots, p_r^{m_r+m_r'}]$. Clearly \eqref{directsum} is an admissible sesquilinear Young tableau for the $\epsilon$-Hermitian space $V\oplus V'$ (the orthogonal direct sum).  Thus the sesquilinear Young tableaux
of the same sign form a monoid, and the following result is clear by construction.

\begin{lem} \label{lem:tableau}
If  the $F$-rational nilpotent orbits $\CO$ and $\CO'$ in $\Fg(V)$ and $\Fg(V')$ correspond to the sesquilinear Young tableaux $(\udl{p}, \{V_{p_i}\})$ and $(\udl{p}', \{V_{p_i}'\})$ respectively, 
$f\in \CO$ and $f'\in \CO'$, 
then the $F$-rational nilpotent orbit  $\CO_{f+f'}$ in $\Fg(V\oplus V')$ corresponds to $\left(\udl{p}, \{V_{p_i}\}\right) \oplus \left(\udl{p}', \{V'_{p_i}\}\right)$.
\end{lem}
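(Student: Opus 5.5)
The plan is to unwind the Gan--Zhu parametrization from \cite{GZ}: the sesquilinear Young tableau of an $F$-rational nilpotent $f\in\Fg(V)$ is obtained from an $\mathfrak{sl}_2$-module structure on $V$ together with the induced forms on the multiplicity spaces. First I would apply the Jacobson--Morozov theorem to choose $\mathfrak{sl}_2$-triples $(f,h,e)$ in $\Fg(V)$ and $(f',h',e')$ in $\Fg(V')$. Under the embedding $\Fg(V)\oplus\Fg(V')\hookrightarrow\Fg(V\oplus V')$, coming from the orthogonal direct sum, the elements $(f+f',h+h',e+e')$ again form an $\mathfrak{sl}_2$-triple in $\Fg(V\oplus V')$. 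Its nilpositive element is $f+f'$. The $\mathfrak{sl}_2$-module $V\oplus V'$ is then the direct sum of the two modules, and the sum is orthogonal because $V\perp V'$.

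Next I would recall how \cite{GZ} recovers the tableau. For each $k$, one sets
\[
V_k=\Hom_{\mathfrak{sl}_2}(S_k,V),
\]
and equips it with the $(-1)^{k-1}\epsilon$-Hermitian form characterized by the isotypic decomposition \eqref{isom}, using the fixed form on $S_k$. Equivalently, the form lives on the lowest weight space of the $k$-isotypic component, paired through $e^{k-1}$ and suitably normalized. Both $\Hom_{\mathfrak{sl}_2}(S_k,-)$ and this form are additive under orthogonal direct sums of $\mathfrak{sl}_2$-modules with invariant forms. Hence the multiplicity space of $V\oplus V'$ in degree $k$ is $V_k\oplus V'_k$ as Hermitian spaces, and the partition is $\udl p\cup\udl p'$. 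Concretely, the isometries
\[
V\cong\bigoplus_i S_{p_i}\otimes V_{p_i}
\quad\text{and}\quad
V'\cong\bigoplus_i S_{p_i}\otimes V'_{p_i}
\]
add up to an $\mathfrak{sl}_2$-equivariant isometry
\[
V\oplus V'\cong\bigoplus_i S_{p_i}\otimes(V_{p_i}\oplus V'_{p_i}).
\]
That is exactly the admissibility datum \eqref{directsum} for the triple attached to $f+f'$.

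Finally, the tableau of $\CO_{f+f'}$ in \cite{GZ} does not depend on the choice of $\mathfrak{sl}_2$-triple, by the Kostant conjugacy of triples with a given nilpositive element. Nor does it depend on the representatives $f,f'$: conjugating by $G(V)\times G(V')\subset G(V\oplus V')$ shows that $\CO_{f+f'}$ depends only on $\CO$ and $\CO'$. Together these give the claim.

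The only delicate point is the even special orthogonal case, where a tableau with only even parts corresponds to two orbits. Since the statement concerns only the corresponding tableau, this causes no issue. One should simply check that the normalization of the forms on $S_k$, including the discriminant-one condition for odd $k$, is fixed once and for all and is therefore the same on both sides. That bookkeeping check is the main, and minor, obstacle.
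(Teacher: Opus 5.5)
Your proposal is correct and is essentially the paper's argument: the paper gives no proof beyond saying the lemma is ``clear by construction'' of the Gan--Zhu parametrization. Your steps simply unwind that construction: summing the commuting $\mathfrak{sl}_2$-triples from $\Fg(V)$ and $\Fg(V')$, and adding the two equivariant isometries to get $V\oplus V'\cong\bigoplus_i S_{p_i}\otimes(V_{p_i}\oplus V'_{p_i})$.
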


We recall the topological orders on $\CN_F(\Fg)_\circ$ following \cite[Definition 5.9]{JLZ25}.

\begin{defn} \label{defn:order}
For an $F$-rational orbit $\CO \in \CN_F(\Fg)_\circ$, denote by 
\[
\CO^\st:=\set{x\in \CN_F(\Fg) | \text{there exists $g\in \BG(\overline{F})$ such that $\Ad(g)(x)\in \CO$}}
\]
the corresponding $F$-stable orbit. 
For two $F$-rational orbits 
$\CO_1, \CO_2 \in \CN_F(\Fg)_\circ$, we say that 
\begin{enumerate}
\item 
$\CO_1\leq_F \CO_2$ under the $F$-rational topological order if $\CO_1$ is contained in the Hausdorff closure of $\CO_2$.

\item 
$\CO_1 \leq_\st \CO_2$ under the $F$-stable topological order if $\CO_1^\st$ is contained in the Zariski closure of $\CO_2^\st$.
\end{enumerate}
\end{defn}

\subsection{Generalized Whittaker models and algebraic wavefront set} \label{sec:GW}

Fix a non-trivial additive character $\psi_F$ of $F$, and  denote by $\kappa$ the Killing form of $\Fg$. Let $\CO$ be an $F$-rational nilpotent orbit in $\Fg$. Fix $f\in \CO$ and extend it to an $\frak{sl}_2$-triple $\{h, e, f\}$ in $\Fg$, where $h=[e,f]$ is semisimple with $[h,e]=2e$ and $[h,f]=-2f$. When $\CO=\{0\}$ it is understood that $h=0$. For $i\in \mathbb{Z}$,  put 
\be \label{grade}
\Fg_i:=\set{x\in \Fg | [h,x]=ix},\quad \Fg_{\geq i}:=\bigoplus_{j\geq i} \Fg_j\quad\text{and}\quad \Fg_{\leq i}:=\bigoplus_{j\leq i}\Fg_j.
\ee
Then we have a parabolic  subgroup $P=MU$ of $G$ with Levi subgroup $M$ and unipotent radical $U$, whose Lie algebras are $\frak m=\Fg_0$ and $\Fu= \Fg_{\geq 1}$ respectively. Moreover, if 
we set 
\[
V_{[i]}:=\set{v\in V | hv=iv},
\]
then we obtain that 
\be \label{levi}
\Fm= \Fg(V_{[0]})\oplus \bigoplus_{i>0} \frak{gl}(V_{[i]}).
\ee

Define a character $\psi_f$ of $U_2:=\exp(\Fg_{\geq 2})$ by 
\[
\psi_f(\exp(z)) := \psi_F(\kappa(f, z)),\quad z\in \Fg_{\geq 2}.
\]
where $\kappa(\cdot,\cdot)$ is the Killing form on $\Fg$. Following \cite[Section 3.3]{GZ}, 
\[
\kappa_f(x,y) := \kappa(f, [x,y]),\quad x, y\in \Fg_1
\]
defines a symplectic form on $\Fg_1$.  Let  $\omega_{\psi_f}$ be the irreducible unitarizable smooth representation of $U$ such that $U_2$ acts by the character $\psi_f$, which is unique up to isomorphism by the Stone-von Neumann theorem. Note that if $\Fg_1=\{0\}$, then $\omega_{\psi_f} = \psi_f$ is a character of $U=U_2$.  

When $\Fg_1\neq\{0\}$, fix a Lagrangian subspace $\Fl$ of $\Fg_1$ with respect to the symplectic form $\kappa_f$, and denote 
$U_{1.5}:=\exp(\Fg_{\geq 1.5})$, where $\Fg_{\geq 1.5}:=\Fl\oplus \Fg_{\geq 2}$. Then $\psi_f$ extends to a character of $U_{1.5}$ by making it trivial on the $\Fl$-part. By \cite[Lemma 2.4.3]{GGS17} we have that 
\[
\omega_{\psi_f}\cong \ind^U_{U_{1.5}}\psi_f.
\]
Here and thereafter, $\ind$ denotes the smooth compactly supported induction  (\cite{BZ})  when $F$ is non-archimedean, and 
the Schwartz induction (\cite{dCl}) when $F$ is archimedean. 

As before, denote by $\Pi_F(G)$ the set of equivalence classes of irreducible admissible representations of $G$, which are of the Casselman-Wallach type if $F$ is archimedean and are genuine if $G$ is metaplectic. 
For any $\pi\in\Pi_F(G)$, define the generalized Whittaker model of $\pi$ associated to $f\in\CO$ by
\[
\Wh_\CO(\pi) :=\Hom_{U}(\omega_{\psi_f}^\vee, \pi^*), 
\]
where  $\pi^*$ is the strong dual of $\pi$. By \cite[Lemma 2.5.2]{GGS17},
\[
\Wh_\CO(\pi) \cong  \Hom_{U_{1.5}}(\pi, \psi_f). 
\]
Define the twisted Jacquet module 
\be \label{Jac}
\CJ_\CO(\pi):= \left(\pi\,\widehat\otimes\, \omega_{\psi_f}^\vee\right)_U \cong \left(\pi\otimes \psi_f^{-1}\right)_{U_{1.5}},
\ee
where the subscripts indicate the maximal coinvariant Hausdorff quotients and the last isomorphism is given by \cite[Lemma 2.16]{GGS21}. Note that in the definition of $\Wh_\CO(\pi)$ and $\CJ_\CO(\pi)$, we may not require that $\pi$ is irreducible. 

\begin{defn} \label{algwf}
Assume that $\pi\in \Pi_F(G)$. The algebraic wavefront set of $\pi$ is 
\begin{eqnarray*}
\wf_\wm(\pi): =  \set{\CO\in \CN_F(\Fg)_\circ | \Wh_\CO(\pi)\neq \{0\}} =   \set{\CO\in \CN_F(\Fg)_\circ | \CJ_\CO(\pi)\neq \{0\}}.
\end{eqnarray*}
\end{defn}

\subsection{Descent and arithmetic wavefront sets} \label{sec:AW}
In \cite{JLZ25} we introduced the arithmetic wavefront set $\wf_\ari(\pi)$ for $\pi\in \Pi_F(G)$ with a 
generic local $L$-parameter. It is defined by means of consecutive descents of enhanced $L$-parameter 
of $\pi$, based on the local Langlands correspondence (established through \cite{Ar,KMSW, Mok, AGI+}) and the local Gan-Gross-Prasad conjecture (\cite{GGP1}) (which is now a theorem thanks to the works of many authors). We refer to \cite{JLZ25} for a list of relevant references. 
 
In this paper, we introduce a new wavefront set $\wf_\des(\pi)$ for each $\pi\in \Pi_F(G)$ with a 
generic local $L$-parameter by means of the local descents of $\pi$ and prove that $\wf_\des(\pi)=\wf_\ari(\pi)$ for each $\pi\in \Pi_F(G)$ with a generic local $L$-parameter (Theorem \ref{thm:des=ari}). The point is that 
$\wf_\des(\pi)$ is more suitable for our proof of Theorem \ref{thm:wfsc}. To this end, we put
\[
\CZ:=F^\times/ \set{ x\cdot c(x) | x\in E^\times},
\]
where $c$ is the generator of ${\rm Gal}(E/F)$. Let $0< p_1\leq \Fn$ such that $p_1$ is odd if $G$ is special orthogonal and is even if $G$ is symplectic or metaplectic, and finally there is no
parity constraint on $p_1$ if $G$ is unitary. Suppose that $\CO$ is an $F$-rational nilpotent orbit  in $\Fg$ corresponding to the partition $[p_1, 1^{\Fn-p_1}]$. 
Then we have two cases.

\begin{itemize}
\item
If $p_1>1$ and $\CO$ corresponds to the (unique) admissible sesquilinear Young tableau 
\[
([p_1, 1^{\Fn-p_1}], \{V_{p_1}, V_1\})
\]
such that the one-dimensional $(-1)^{p_1-1}\epsilon$-Hermitian space $V_{p_1}$ has discriminant $a_1\in \CZ$, we write 
$\CO=\CO_{p_1, a_1}$. As in \eqref{isom}, we may assume that 
\[
V=V'\oplus V'':= S_{p_1}\otimes V_{p_1}\oplus S_1\otimes V_1
\]
under the action of the $\frak{sl}_2$-triple $\{h, e, f\}$, where $f\in \CO_{p_1, a_1}$.  
Then $\CJ_{\CO_{p_1, a_1}}(\pi)$ in \eqref{Jac} is a smooth (and nuclear Fr\'echet of moderate growth if $F$ is archimedean) representation of 
\[
H:=\begin{cases} G(V''), & \text{if $G$ is not symplectic}, \\
\Mp(V''), & \text{if $G$ is symplectic},
\end{cases}
\]
where in the case that $G$ is unitary and $p_1$ is even, we take a splitting 
\[
G(V'') \hookrightarrow \Mp(\Res_{E/F}(\bm{\delta}^{(\epsilon+1)/2}\cdot V''))
\]
as in \cite[Section 12]{GGP1}. Note that 
the Lie algebra of $H$ is $\Fh=\Fm^f$, where $\Fm$ as in \eqref{levi} is the Lie algebra of the corresponding Levi subgroup $M$.
\item
If $p_1=1$, then $\CO=\{0\}$. In this case, if $V'$ is an anisotropic line in $V$ with discriminant $a_1\in \CZ$ and 
$V=V'\oplus V''$ is the orthogonal decomposition, we write 
\[
\CJ_{\CO_{1,a_1}}(\pi):=\pi|_H,\quad \text{where}\quad H:= G(V'').
\]
In this case we define the sesquilinear Young tableau $(p_1, V_{p_1}):=(1, V')$.
\end{itemize}
It is clear that $(G,H)$ form a relevant pair in the setting of the local Gan-Gross-Prasad conjecture (\cite{GGP1}). 

\begin{defn}[Descent Wavefront Set] \label{dfn:DWS}
For any $\pi\in\Pi_F(G)$ with a generic $L$-parameter, 
an $L$-descent orbit of $\pi$ is defined inductively to be an  $F$-rational nilpotent orbit 
\[
(p_1, V_{p_1}) \oplus \cdots \oplus (p_r, V_{p_r})\in \CN_F(\Fg)_\circ
\]
given  by  \eqref{directsum}, such that 
\begin{enumerate}
    \item $\CJ_{\CO_{p_1,a_1}}(\pi)$ is nonzero and admits $\sigma\in \Pi_F(H)$ as an irreducible quotient representation, i.e., a descent of $\pi$ along  the sesquilinear Young tableau  $(p_1, V_{p_1})$ in \eqref{onepart};
    \item \label{item:generic} the $L$-parameter of $\sigma$ is generic;
    \item $(p_2, V_{p_2}) \oplus \cdots \oplus (p_r, V_{p_r})$ is an $L$-descent orbit of $\sig$;
    \item \label{item:decreasing} $p_1\geq p_2$.
\end{enumerate}
The {\bf descent wavefront set} $\wf_\des(\pi)$ is the set consisting of all the $L$-descent orbits of $\pi$. 
\end{defn}
Note that if $p_1=1$, the $L$-descent orbit is the zero nilpotent orbit.
In view of the non-tempered Gan-Gross-Prasad conjecture (\cite{GGP2}), the above approach may be also applied to study the wavefront sets of Arthur type representations by relaxing the conditions \eqref{item:generic} and \eqref{item:decreasing} in Definition \ref{dfn:DWS}.

\begin{thm} \label{thm:des=ari}
    For any $\pi\in\Pi_F(G)$ with a generic $L$-parameter, the two wavefront sets $\wf_\des(\pi)$ and $\wf_\ari(\pi)$ are equal, i.e. 
    \[
    \wf_\des(\pi)=\wf_\ari(\pi).
    \]
\end{thm}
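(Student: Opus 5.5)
The plan is to compare the two inductive definitions step by step, since both sets are built from consecutive one-step descents. In \cite{JLZ25}, $\wf_\ari(\pi)$ is the set of orbits $(p_1,V_{p_1})\oplus\cdots\oplus(p_r,V_{p_r})$ obtained from chains of enhanced $L$-parameters $(\varphi,\chi)=(\varphi_0,\chi_0)\to(\varphi_1,\chi_1)\to\cdots$. At the $i$-th step, $(\varphi_i,\chi_i)$ is a generic enhanced parameter of the smaller group $H_i$. It is chosen so that the local Gan--Gross--Prasad (Gross--Prasad/Prasad epsilon-dichotomy) condition holds for the pair $(\varphi_{i-1},\chi_{i-1})$, $(\varphi_i,\chi_i)$ relative to the relevant pair attached to $(p_i,V_{p_i})$. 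In addition, $p_i$ is the first-occurrence-type index prescribed there, with $p_1\ge p_2\ge\cdots$. I would therefore prove the equality by induction on $\Fn=\dim_E V$. The key input is a one-step reciprocity: for $\pi$ with generic parameter $(\varphi,\chi)$, for $p_1$ of the admissible parity, and for $a_1\in\CZ$, an irreducible $\sigma\in\Pi_F(H)$ with generic $L$-parameter is a quotient of $\CJ_{\CO_{p_1,a_1}}(\pi)$ if and only if the enhanced parameter of $\sigma$ is an $L$-parameter descent of $(\varphi,\chi)$ along $(p_1,V_{p_1})$.

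To prove the one-step statement, I would first identify $\Hom_H(\CJ_{\CO_{p_1,a_1}}(\pi),\sigma)$ with a Bessel (if $p_1$ is odd, or in the orthogonal/unitary codimension-odd case) or Fourier--Jacobi (if $p_1$ is even) model $\Hom_{H\ltimes N}(\pi\otimes\bar\sigma,\nu)$ for the relevant GGP pair $(G,H)$. This identification comes from unwinding \eqref{Jac}: $U_{1.5}$ together with the stabilizer of $\psi_f$ in $M$ gives exactly the Bessel/Fourier--Jacobi subgroup. Since both $\pi$ and $\sigma$ have generic parameters, the local GGP conjecture for generic parameters (now a theorem over all local fields of characteristic zero, including pure inner forms and metaplectic groups) says the multiplicity is nonzero exactly when the characters $\chi,\chi_\sigma$ agree with the prescribed epsilon-factor character. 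That is precisely the condition defining the $L$-parameter descent in \cite{JLZ25}. The remaining part of the one-step statement is the \emph{first occurrence} aspect, namely that only those $p_1$ occurring as the leading index in \cite{JLZ25} are allowed. For this I would invoke \cite[Theorem 1.4]{CJLZ}, which matches first occurrence indices of descents for $\pi$ and for $(\varphi,\chi)$ in the generic case.

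With the one-step reciprocity in hand, the induction is formal. Condition (1) of Definition \ref{dfn:DWS} corresponds to the existence of a descended parameter. Condition (2) ensures that $\sigma$ has a generic parameter, so both GGP and the induction hypothesis apply to $\sigma$. Condition (3) reduces to $\wf_\des(\sigma)=\wf_\ari(\sigma)$ by induction. Condition (4) matches the monotonicity built into the arithmetic chains. The base case $p_1=1$, giving the zero orbit via restriction $\pi|_H$, is again the codimension-one GGP. Summing the tableaux by \eqref{directsum} and Lemma \ref{lem:tableau} gives the same orbit on both sides.

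The main obstacle is making the two definitions match at the level of conditions rather than only of nonvanishing. In particular, one must check that the arithmetic descent in \cite{JLZ25} quantifies over all generic descended parameters along a given tableau (not only those at the first occurrence), or else show that condition (4) forces the same restriction on the representation side. There is also the bookkeeping of discriminants $a_1\in\CZ$ against the pure inner form of $H$ on which $\sigma$ lives, and of the choice of splitting and Weil representation in the unitary even-$p_1$ and metaplectic cases. I would handle this by comparing case by case with the normalizations in \cite[Section 12]{GGP1} and \cite{JLZ25}.
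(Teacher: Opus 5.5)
Your strategy is the paper's. You induct on $\Fn$, and you prove a one-step matching: an irreducible $\sigma$ with generic parameter is a quotient of $\CJ_{\CO_{p_1,a_1}}(\pi)$ if and only if its enhanced parameter is a first-step $L$-parameter descent of $(\varphi,\chi)$ along $(p_1,V_{p_1})$. This comes from the local Gan--Gross--Prasad conjecture together with \cite{CJLZ}, and conditions (2)--(4) of Definition \ref{dfn:DWS} then carry the induction. Your explicit reading of $\Hom_H(\CJ_{\CO_{p_1,a_1}}(\pi),\sigma)$ as a Bessel or Fourier--Jacobi model is more detailed than the paper, which only says ``reformulated in the present notation''. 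It is the right justification for why GGP applies.

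The one thing to correct is your ``first occurrence aspect''. You describe $\wf_\ari(\pi)$ as built from chains whose rows are first-occurrence indices. You then propose to use \cite[Theorem 1.4]{CJLZ} to show that only the leading $p_1$ is allowed. That is not the definition. As recalled in the paper's proof, $\wf_\ari(\pi)$ is the set of orbits of all tableaux in $\CY_a(\varphi,\chi)$ with decreasing partition. These tableaux come from consecutive $L$-parameter descents along arbitrary admissible rows, which is why $\wf_\ari(\pi)$ has non-maximal members and one speaks of $\wf_\ari(\pi)^{r-\mx}$ at all.

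So your ``main obstacle'' is settled by your first alternative. The one-step reciprocity is needed for every admissible $(p_1,a_1)$, and GGP supplies it for each of them. The step restricting to the leading $p_1$, and your fallback that condition (4) forces first occurrence on the representation side, should both be dropped, because they are false. Condition (4) only imposes monotonicity of the rows, and nothing in Definition \ref{dfn:DWS} requires $p_1$ to be the first occurrence index. Also, \cite[Theorem 1.4]{CJLZ} only matches the largest $p_1$ with nonzero descent on the two sides. It says nothing about vanishing of generic-parameter quotients of $\CJ_{\CO_{p_1,a_1}}(\pi)$ for smaller $p_1$.

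For a concrete case, take an irreducible generic principal series $\pi$ of split $\SO_3$. Its first occurrence index is $3$. Yet successive restriction to $\SO_2$ and then $\SO_1$ puts the zero orbit in $\wf_\des(\pi)$.
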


  \begin{proof}
First, we recall the definition of the arithmetic wavefront set $\wf_{\ari}(\pi)$ introduced in \cite[Section 5.3]{JLZ25}. 
Fix a Whittaker datum $a$ for the quasi-split form of $G$, which determines the local Langlands correspondence for $\Pi_F(G)$ and associates to $\pi$ a unique enhanced $L$-parameter $(\varphi,\chi)$.
In \cite[(5.14)]{JLZ25}, with respect to the Whittaker datum $a$, we attach to $(\varphi,\chi)$ a set $\CY_a(\varphi,\chi)$ of admissible sesquilinear Young tableaux as follows.
The consecutive descent of enhanced $L$-parameters, defined by means of the distinguished characters that are prescribed by the local Gan--Gross--Prasad conjecture, produces a chain of enhanced $L$-parameters
\[
(\varphi,\chi)\rightarrow (\phi_1,\chi_1')\rightarrow \cdots \rightarrow (\phi_r,\chi_r')
\]
together with a decreasing sequence of rows $(p_1,V_{p_1}),\dots,(p_r,V_{p_r})$ (associated with their corresponding one-dimensional $(-1)^{p_i-1}\epsilon$-Hermitian spaces $V_{p_i}$). 
Combining these rows yields an admissible sesquilinear Young tableau in $\CY_a(\varphi,\chi)$ and an $F$-rational nilpotent orbit in $\CN_F(\Fg)_{\circ}$ via the ``direct-sum" construction \eqref{directsum}.  
Following the construction in \cite[Section 5.3]{JLZ25}, the arithmetic wavefront set $\wf_{\ari}(\pi)$ is precisely the set of $F$-rational nilpotent orbits arising from those tableaux in $\CY_a(\varphi,\chi)$ whose underlying partitions are decreasing (equivalently, $p_1\ge p_2\ge\cdots$). 

On the other hand, $\wf_{\des}(\pi)$ is defined via the existence of a nonzero Jacquet module $\CJ_{\CO_{p_1, a_1}}(\pi)$ admitting an irreducible quotient $\sigma \in \Pi_F(H)$. 
By the definition of $L$-descent orbits, $\sigma$ must have a generic $L$-parameter, denoted by $\phi_1$.
By the local Gan--Gross--Prasad conjecture and the results of \cite{CJLZ}, reformulated in the present notation, the first occurrence index and the corresponding quotient in the representation-theoretic descent agree with the first arithmetic descent of the enhanced parameter.
Hence, an irreducible quotient $\sigma$ with generic $L$-parameter $\phi_1$ occurs in $\CJ_{\CO_{p_1,a_1}}(\pi)$ if and only if $(\varphi,\chi)$ has a first-step descent $(\phi_1,\chi_1')$ with first row $(p_1,V_{p_1})$ in the sense described above. Moreover, the genericity of the parameter of $\sigma$ and the inductive continuation of the descent are exactly the conditions required for the consecutive descent on the parameter side to proceed.  
By the induction hypothesis applied to $\sigma$ (which has generic $L$-parameter $\phi_1$), the subsequent terms in the orbit decompositions for the descent wavefront set and the arithmetic wavefront set must also coincide.  

Therefore, an $F$-rational nilpotent orbit belongs to $\wf_{\des}(\pi)$ (i.e., it arises from an iterative chain of nonzero descents through the quotients with generic $L$-parameters, with $p_1\ge p_2$ at each step) if and only if it is realized by a decreasing admissible tableau in $\CY_a(\varphi,\chi)$ via the corresponding  $L$-parameter descents. 
This is precisely the defining criterion for membership in $\wf_{\ari}(\pi)$ and proves that $\wf_{\des}(\pi)=\wf_{\ari}(\pi)$.
\end{proof}

 
\section{Composition Law for Generalized Whittaker Models}\label{sec:CL}

The results in this section hold for all local fields $F$ of characteristic zero. We establish a root exchange lemma (Lemma \ref{lem:exchange}) for general affine algebraic groups over $F$, and as an application we prove a composition law (Theorem \ref{Thm:CL}) for generalized Whittaker models of classical groups.

\subsection{Root exchange lemma} \label{sec:REL}

In this section, we formulate a version of the so-called root exchange lemma (\cite[Lemma 2.2]{GRS1}, \cite[Lemma 7.1]{GRS3}), which works for all local fields of characteristic zero following the arguments in \cite{GGS17}, and then prove a generalization of it that will be a key technical ingredient in our proof of Theorem \ref{thm:wfsc} and will be useful in future applications.  

For a group $G$, if $x,y\in G$, write $[x,y]:=xyx^{-1}y^{-1}$ for their commutator. 

\begin{lem}[Root Exchange]\label{lem:exchange0}
Let $C\subset A$ be unipotent groups over $F$, and $\psi_C$ be a character of $C$. 
Assume that 
\begin{enumerate}
   \item $X$ and $Y$ are abelian unipotent subgroups of $A$, normalize $C$ and preserve $\psi_C$;
   \item $[X,Y]\subset C$;
   \item $A=D\rtimes Y= B\rtimes X$, where $D=XC$ and $B=CY$;
   \item The pairing $X\times Y\to \mathbb{C}^{\times}$ defined by $(x,y)\mapsto \psi_C([x,y])$ is non-degenerate.
\end{enumerate}
\[
\xymatrix{
& BX= A= DY \ar@{-}[dl]_X \ar@{-}[dr]^Y & \\
B & & D \\
& C \ar@{-}[ul]^Y \ar@{-}[ur]_X & 
}
\]
Let $\pi$ be a smooth representation of $A$, which is nuclear Fr\'echet of moderate growth when $F$ is archimedean. Then there is an isomorphism 
\[
\Hom_B(\pi, \psi_B) \cong \Hom_D(\pi, \psi_D),
\]
where $\psi_B$ and $\psi_D$ are the trivial extensions of $\psi_C$ to $B$ and $D$, respectively. 
\end{lem}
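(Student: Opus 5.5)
The plan is to pass through the common subgroup $C$ and identify both sides with a single space built from $C$-equivariant functionals, using the Heisenberg-type structure that hypotheses (1)–(4) provide. First, $\psi_B$ and $\psi_D$ are genuine characters. Since $B=C\rtimes Y$ with $Y$ abelian normalizing $C$ and preserving $\psi_C$, the extension of $\psi_C$ that is trivial on $Y$ is multiplicative on $B$. The same argument applies to $D=XC$. Moreover, $[X,Y]\subset C$ together with hypothesis (4) shows that $(x,y)\mapsto\psi_C([x,y])$ is a bimultiplicative nondegenerate pairing. This makes $X$ and $Y$ mutually Pontryagin dual through $\psi_C$, in the sense that $y\mapsto \psi_C([x,y])$ runs over all unitary characters of $Y$ as $x$ varies, and conversely.

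Next consider $\sigma:=\Hom_C(\pi,\psi_C)$, or more concretely the twisted coinvariants $\pi_{C,\psi_C}$. The groups $X$ and $Y$ both act on these coinvariants. The action is twisted: for $x\in X$, the functional $\ell\circ\pi(x)$ is again $(C,\psi_C)$-equivariant, and for $y\in Y$ one has $\ell\circ\pi(x)\circ\pi(y)=\psi_C([x,y])^{\pm1}\,\ell\circ\pi(y)\circ\pi(x)$ up to a normalization. So $\pi_{C,\psi_C}$ is a representation of a Heisenberg-type group generated by $X$, $Y$ and the central character. With this, $\Hom_B(\pi,\psi_B)$ is the space of $Y$-invariant functionals on $\pi_{C,\psi_C}$, and $\Hom_D(\pi,\psi_D)$ is the space of $X$-invariant ones.

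The two spaces are then identified by explicit integral transforms, following \cite[Lemma 7.1]{GRS3} and \cite{GGS17}. Given $\ell\in\Hom_D(\pi,\psi_D)$, define
\[
\Lambda(\ell)(v):=\int_{Y}\ell(\pi(y)v)\,dy ,
\]
and given $\lambda\in\Hom_B(\pi,\psi_B)$, define $\Lambda'(\lambda)(v):=\int_X\lambda(\pi(x)v)\,dx$. Formally, $\Lambda(\ell)$ is $Y$-invariant by invariance of $dy$, and it is $C$-equivariant because $Y$ normalizes $C$ and preserves $\psi_C$. The composite $\Lambda'\circ\Lambda$ equals an integral over $X\times Y$ of $\ell\circ\pi(xy)$. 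Commuting $x$ past $y$ produces $\psi_C([x,y])$, and Fourier inversion for the nondegenerate pairing collapses this to $\ell$ up to a fixed measure constant. The other composite is handled symmetrically.

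The main obstacle is convergence, especially in the archimedean case. The integrals do not converge for arbitrary $v$. I would first establish the identity on the subspace $\pi\cdot\mathcal S(X)$ (respectively $\pi\cdot\mathcal S(Y)$) of vectors smoothed by Schwartz functions. For such a vector, $y\mapsto \ell(\pi(y)\pi(\phi)v)$ becomes, via the twisting relation, the Fourier transform of a Schwartz function, and is therefore Schwartz on $Y$. I would then invoke the Dixmier–Malliavin theorem (or its $p$-adic analogue, with compactly supported functions and the Bernstein–Zelevinsky argument) to see that $\pi=\pi\cdot\mathcal S(X)$. Here one needs nuclear Fréchet moderate growth so that $X$-smoothing is surjective and the functionals extend continuously. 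This is exactly the framework of \cite[Lemma 2.5.2, Sect.~3]{GGS17}, and I would adapt their argument with $\mathfrak g_1$ replaced by $\mathrm{Lie}(X)\oplus\mathrm{Lie}(Y)$. Equivalently, one can phrase this dually as an isomorphism of twisted Jacquet modules, $\pi_{B,\psi_B}\cong\pi_{D,\psi_D}$, both being isomorphic to the coinvariants of $\pi_{C,\psi_C}\,\widehat\otimes\,\omega^\vee$ under the Heisenberg group. The Hom statement then follows by duality.
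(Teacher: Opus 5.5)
Your argument is correct in outline, but it takes a different route from the paper's.

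The paper never works with matrix coefficients of $\pi$. It quotes \cite[Lemma 3.2.1]{GGS17} for an $A$-isomorphism $\ind^A_B(\psi_B^{-1})\cong\ind^A_D(\psi_D^{-1})$, which is a partial Fourier transform between the two Schr\"odinger-type realizations on Schwartz functions on $X$ and on $Y$. It then transports this isomorphism to the Hom spaces by Frobenius reciprocity for Schwartz induction \cite[Lemma 2.5.2]{GGS17}, namely $\Hom_B(\pi,\psi_B)\cong\Hom_A(\ind^A_B(\psi_B^{-1}),\pi^*)$. In that approach all the Fourier analysis happens on Schwartz spaces of vector groups, where it is clean, and the isomorphism is visibly natural in $\pi$. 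The hypotheses on $\pi$ enter only through Frobenius reciprocity.

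You work on the dual side instead. Your transforms $\ell\mapsto\int_Y\ell\circ\pi(y)\,dy$ and $\lambda\mapsto\int_X\lambda\circ\pi(x)\,dx$ are essentially the transposes of that partial Fourier transform. Your closing remark about twisted Jacquet modules is close to the paper's formulation. Your route buys an explicit formula for the isomorphism, the local counterpart of the global GRS root exchange, and it does not use nuclearity. The price is that the analysis must be done on $\pi$ itself.

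Three points should be stated more carefully when you write this out.

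\emph{First}, $\ell(\pi(y)\pi(\phi)w)=\widehat\phi(y)\,\ell(\pi(y)w)$ is not a Fourier transform of a Schwartz function. It is a Schwartz function times a matrix coefficient. It is Schwartz on $Y$ only because $y\mapsto\ell(\pi(y)w)$ and its derivatives have moderate growth, and this is exactly where the moderate growth hypothesis is used.

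\emph{Second}, Dixmier--Malliavin writes every $v$ as a finite sum $\sum_i\pi(\phi_i)w_i$. Once you have that, $\int_Y\ell(\pi(y)v)\,dy$ converges absolutely for every $v$. It is independent of the decomposition, because the integrand is defined pointwise. So nothing needs to be extended from a subspace. What does need proof is the continuity of $\Lambda(\ell)$, for example via the closed graph theorem applied to $v\mapsto\ell(\pi(\cdot)v)\in L^1(Y)$.

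\emph{Third}, $\Lambda'\circ\Lambda$ is an iterated integral over $X\times Y$, not an absolutely convergent double integral. Set $F(y):=\ell(\pi(y)v)$. Then $\Lambda(\ell)(\pi(x)v)=\int_Y\psi_C([y,x])F(y)\,dy=\widehat F(x)$. The identity $\int_X\widehat F(x)\,dx=c\,F(1)=c\,\ell(v)$ is Fourier inversion at the identity, valid because $F$ is Schwartz.
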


\begin{proof}
 By \cite[Lemma 3.2.1]{GGS17} we have an $A$-isomorphism $\ind^A_B(\psi_B^{-1})\cong \ind^A_D(\psi_D^{-1})$, hence by \cite[Lemma 2.5.2]{GGS17}
    \[
    \Hom_B(\pi, \psi_B) \cong \Hom_A(\ind^A_B(\psi_B^{-1}), \pi^*)\cong \Hom_A(\ind^A_D(\psi_D^{-1}), \pi^*) \cong \Hom_D(\pi, \psi_D),
    \]
    where $\pi^*$ is the strong dual of $\pi$.
\end{proof}

Applying Lemma \ref{lem:exchange0} repeatedly, we have the following useful generalization, a global analog of which can be found in \cite{JL16}. 

\begin{lem}[Generalized Root Exchange]\label{lem:exchange}
Let $C$ be a unipotent subgroup of an affine algebraic group $G$ over $F$, and let $\psi_C$ be a character of $C$. 
Assume that 
\begin{enumerate}
   \item \label{RE1} $X=\prod_{i=1}^mX_i$ and $Y=\prod_{i=1}^m Y_i$ are direct products of abelian unipotent groups contained in $G$ such that $X\cap C= Y\cap C=\{1\}$;
   \item \label{RE2} $[X_i,Y_j]\subset C$ for all $1\leq i \leq j\leq m$;
   \item \label{RE3} $[C, X_i]\subset CX_1X_2\cdots X_{i-1}$ and $[C, Y_i]\subset CY_{i+1}Y_{i+2}\cdots Y_m$ for all $1\leq i\leq m$; 
   \item \label{RE4} $X_i$ and $Y_i$  preserve $\psi_{<i}$ and $\psi_{>i}$ respectively, where $\psi_{<i}$ and $\psi_{>i}$ are the trivial extensions of $\psi_C$ to $CX_1X_2\cdots X_{i-1}$ and $CY_{i+1}Y_{i+2}\cdots Y_m$, for all $1\leq i\leq m$;
   \item \label{RE5} The pairing $X_i\times Y_i\to \mathbb{C}^{\times}$ defined by $(x,y)\mapsto \psi_C([x,y])$ is non-degenerate for all $1\leq i\leq m$.
\end{enumerate}
Let $\pi$ be a smooth representation of $G$, which is nuclear Fr\'echet of moderate growth when $F$ is archimedean. Then there is an isomorphism 
\[
\Hom_B(\pi, \psi_B) \cong \Hom_D(\pi, \psi_D),
\]
where $B=X C$ and $D=CY$, with characters $\psi_B$ and $\psi_D$ being the trivial extensions of $\psi_C$ respectively. 
\end{lem}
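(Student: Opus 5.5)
We argue by induction on $m$, exchanging one pair $(X_i,Y_i)$ at a time via Lemma \ref{lem:exchange0}. For $0\le k\le m$ we introduce the intermediate groups
\[
E_k:=C\,X_1\cdots X_k\,Y_{k+1}\cdots Y_m ,
\]
so that $E_m=XC=B$ and $E_0=CY=D$. Each $E_k$ carries the trivial extension $\psi_{E_k}$ of $\psi_C$. The goal is a chain of isomorphisms
\[
\Hom_{E_{k}}(\pi,\psi_{E_k})\cong \Hom_{E_{k-1}}(\pi,\psi_{E_{k-1}}),\qquad 1\le k\le m,
\]
whose composite is the asserted isomorphism.

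First we check that each $E_k$ is a unipotent group and $\psi_{E_k}$ is a well-defined character. We use \eqref{RE3} to move $C$ past the $X_i$ and $Y_j$. We use \eqref{RE2} to see that commutators $[X_i,Y_j]$ with $i\le k<j$ land in $C$. This gives closure under multiplication. For the character, \eqref{RE4} ensures that conjugation by $X_i$ fixes $\psi_{<i}$ and conjugation by $Y_j$ fixes $\psi_{>j}$. Together with \eqref{RE2}, this shows that $\psi_C$ extended trivially on the $X$ and $Y$ factors is multiplicative. The condition $X\cap C=Y\cap C=\{1\}$ gives the semidirect/product structure needed for the decomposition to be unique.

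For the $k$-th step we apply Lemma \ref{lem:exchange0} with
\[
C^{(k)}:=C\,X_1\cdots X_{k-1}\,Y_{k+1}\cdots Y_m,\quad \psi_{C^{(k)}}=\psi_{E_k}|_{C^{(k)}},\quad X^{(k)}=X_k,\quad Y^{(k)}=Y_k,
\]
and $A^{(k)}=C^{(k)}X_kY_k$. Then $B^{(k)}=C^{(k)}Y_k=E_{k-1}$ and $D^{(k)}=X_kC^{(k)}=E_k$. We verify the hypotheses of Lemma \ref{lem:exchange0} as follows:
\begin{enumerate}
\item $X_k$ and $Y_k$ normalize $C^{(k)}$ and preserve its character, by \eqref{RE2}, \eqref{RE3} and \eqref{RE4}.
\item $[X_k,Y_k]\subset C\subset C^{(k)}$ by \eqref{RE2}.
\item $A^{(k)}$ decomposes as the two semidirect products required.
\item The pairing $\psi_{C^{(k)}}([x,y])=\psi_C([x,y])$ is non-degenerate by \eqref{RE5}.
\end{enumerate}
Since $\pi|_{A^{(k)}}$ is still smooth (and nuclear Fr\'echet of moderate growth), Lemma \ref{lem:exchange0} yields the $k$-th isomorphism, with the sides matching $B^{(k)}=E_{k-1}$ and $D^{(k)}=E_k$ as above.

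The main obstacle is verifying the group-theoretic hypotheses at each step. Specifically, we must show that $C^{(k)}$ is a group normalized by $X_k$ and $Y_k$, and that the trivially extended character is invariant under them. The asymmetric conditions \eqref{RE2}–\eqref{RE4} are tailored to this: the $X$'s only feed into earlier $X$'s, the $Y$'s only into later $Y$'s, and mixed commutators with $i\le j$ lie in $C$. Even so, one must check the commutators $[Y_k,X_i]$ for $i<k$ and $[X_k,Y_j]$ for $j>k$ by hand. The index convention in \eqref{RE2} gives $[X_i,Y_j]\subset C$ only for $i\le j$, so the order of exchanges has to be chosen compatibly; if the descending order above fails, we would instead run the induction in increasing $k$. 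We would handle this by working at the Lie algebra level and using that the groups are unipotent, so that $\exp$ is a bijection.
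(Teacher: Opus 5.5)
Your argument is the paper's proof. Your $C^{(k)}$ is the paper's $C_k=X_1\cdots X_{k-1}\,C\,Y_{k+1}\cdots Y_m$, your $E_k$ is its $X_kC_k=C_{k+1}Y_{k+1}$, and the isomorphism is the composite of the $m$ single exchanges given by Lemma \ref{lem:exchange0}. Your closing worry about the order of the exchanges is unnecessary. Inside $A^{(k)}=C X_1\cdots X_k Y_k\cdots Y_m$ the only mixed commutators are $[X_i,Y_j]$ with $i\le k\le j$, and these lie in $C$ by condition (2). The order in which you compose the $m$ isomorphisms plays no role.

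One step does not follow from the stated hypotheses, and the paper's proof skips it in the same way (``which also allows us to extend $\psi_C$ to a character of $C_i$''). Condition (2) puts $[X_i,Y_j]$ in $C$, not in $\ker\psi_C$.

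\begin{itemize}
\item For $x\in X_k$ and $y\in Y_j$ with $j>k$ we have $xyx^{-1}=[x,y]\,y$. So $X_k$ preserves the trivially extended character of $C^{(k)}$ only if $\psi_C([x,y])=1$.
\item Likewise, the trivial extension is multiplicative on $E_k$ only if $\psi_C([X_i,Y_j])=1$ for $i\le k<j$.
\end{itemize}

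Your sentence ``together with (2), this shows that $\psi_C$ extended trivially \dots\ is multiplicative'' is therefore unjustified, and conditions (1)--(5) do not force it. Here is a counterexample to the step. Take the Heisenberg group on $\mathrm{span}(x_1,x_2,y_1,y_2)\oplus F$ built from the nondegenerate form with $\omega(x_1,y_1)=\omega(x_2,y_2)=\omega(x_1,y_2)=1$ and all other pairings of basis vectors zero. Let $C$ be the centre with $\psi_C$ nontrivial, and let $X_i$, $Y_i$ be the lines through $x_i$, $y_i$. Then (1)--(5) all hold. But $E_1=CX_1Y_2$ is itself a Heisenberg group, so $\psi_C$ has no extension to a character of $E_1$. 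Also, $X_1$ does not preserve the trivial extension of $\psi_C$ to $C^{(1)}=CY_2$. The conclusion of the lemma is still true here, by one application of Lemma \ref{lem:exchange0} to $X$ and $Y$ directly, but not via your chain.

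The remedy is to add the hypothesis $\psi_C([X_i,Y_j])=1$ for $i<j$. With it, your verification of the hypotheses of Lemma \ref{lem:exchange0} goes through. This is also what holds in the application in Section \ref{ssec:CL}: at the Lie algebra level, $\kappa(f,[\Fx_i,\Fy_j])=0$ for $i<j$ because $[\Fx_i,\Fy_j]\subset\Fg_{j-i+2}$.
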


\begin{proof}
Denote $C_i=X_1X_2\cdots X_{i-1} C Y_{i+1}\cdots Y_m$ for $1\leq i\leq m$.
By the conditions \eqref{RE1}--\eqref{RE5}, for each $1\leq i\leq m$ the triple $(X_i,Y_i,C_i)$  satisfies the conditions in Lemma \ref{lem:exchange0}, which also allows us to extend $\psi_C$ to a character of $C_i$ trivially on $X_1X_2\cdots X_{i-1}\cdot Y_{i+1}Y_{i+2}\cdots Y_m$. We denote the trivial extension of $\psi_C$ to $C_i$ by $\psi_i$.

Applying Lemma \ref{lem:exchange0} for 
\[
\xymatrix{
  X_i C_i  \ar@{-}[dr]_{X_i} & & \ar@{-}[dl]^{Y_i} C_{i} Y_i  \\
   & C_i &
}
\]
we  obtain  that
\begin{equation}\label{eq:exch-individual-step-t}
 \Hom_{X_iC_i}(\pi, \psi_i)\cong  \Hom_{C_{i}Y_i}(\pi, \psi_{i-1 }),  
\end{equation} 
where by abuse of notation we still write $\psi_i$ for its trivial extension to $X_iC_i$ and $\psi_0:=\psi_D$, noting that 
$X_iC_i=C_{i+1}Y_{i+1}$ due to the conditions \eqref{RE1}--\eqref{RE4}. 

We have that $B=X_m C_m$ and $D=C_1Y_1$. In particular, $\psi_B = \psi_m$ and $\psi_D= \psi_0$. Applying \eqref{eq:exch-individual-step-t} inductively for $i=m,m-1,\dots, 1$, we obtain that
\[
\Hom_{B}(\pi, \psi_B)\cong \Hom_{X_{m-1}C_{m-1}}(\pi,\psi_{m-1}) \cong \cdots\cong  \Hom_{X_1C_1}(\pi,\psi_1)\cong \Hom_{D}(\pi,\psi_D)
\]
which can be illustrated in the following diagram:

{\small
\begin{equation*}
\begin{array}{c}
\xymatrix{
   B=X_m C_m \ar@{-}[dr]_{X_m} & & \ar@{-}[dl]^{Y_m} C_{m} Y_m=X_{m-1}C_{m-1}   \ar@{-}[dr]_{X_{m-1}} & &\ar@{-}[dl]^{Y_{m-1}}C_{m-1}Y_{m-1}=X_{m-2} C_{m-2}  \ar@{-}[dr]_{X_{m-2}}  & \cdots    \\
   & C_m & &C_{m-1} & &\cdots
   }\\
\xymatrix{
\cdots & C_2Y_2 = X_1 C_1 \ar@{-}[dr]_{X_1} \ar@{-}[dl]^{Y_2} & & \ar@{-}[dl]^{Y_1} C_{1}Y_1=D   \\
\cdots & & C_1  &
}
\end{array}
\end{equation*}
}
\end{proof}

\subsection{Composition law} \label{ssec:CL}

In this section, by using the root exchange lemma (Lemma \ref{lem:exchange}), we prove a composition law for generalized Whittaker models of classical groups over any local field of characteristic zero.
We mention that a special case of its global analog can be found in \cite{GRS2}. Our proof is Lie algebra theoretical, which avoids matrix computations and should be more conceptual. 

Assume that 
$\CO'\in \CN_F(\Fg)_\circ$ corresponds to a sesquilinear Young tableau of the form 
\[
([p_1^{m_1},\ldots, p_r^{m_r}, 1^{\Fn''}], \{V'_{p_i}, V'_1\}),
\]
where  $p_1> \cdots > p_r>1$, such that
\[
V  = \bigoplus^r_{i=1} S_{p_i}\otimes V'_{p_i} \oplus S_1\otimes V'_1
\]
under the action of an $\frak{sl}_2$-triple $\{h',e',f'\}\subset \Fg$ with $f'\in \CO'$. 
 Write $\udl{p}:=[p_1^{m_1},\ldots, p_r^{m_r}]$. Then $\Fn=\Fn'+\Fn''$, where $\Fn' =|\udl{p}|:= \sum^r_{i=1}m_i p_i$.
Put
\[
V':= \bigoplus^r_{i=1} S_{p_i}\otimes V'_{p_i} \quad \text{and}\quad V'':=S_1\otimes V'_1. 
\]
There is an involution $\theta$ of $\Fg$ such that
\be \label{inv}
\Fg^\theta = \Fg'\oplus \Fg'':=\Fg(V')\oplus \Fg(V''),   
\ee
and 
\be \label{inv-}
\Fg^{-\theta}\cong V'\otimes V''\text{ as $(\frak g'\oplus \frak g'')$-modules}.
\ee

Further assume that  $\CO'' \in \CN_F(\Fg'')_\circ$ corresponds to a sesquilinear Young tableau 
\[
(\udl{q}, \{V''_{q_j}\}),\quad \text{where}\quad \udl{q}=[q_1^{n_1}, \ldots, q_s^{n_s}], \ q_1>\cdots>q_s >0
\]
such that  
\[
V''= \bigoplus^s_{j=1} S_{q_j}\otimes V''_{q_j}
\]
under the action of an $\frak{sl}_2$-triple $\{h'', e'', f''\}\subset \Fg''$ with $f''\in\CO''$. 

The $\frak{sl}_2$-triples $\{h,e,f\}, \{h',e',f'\}$ and $\{h'',e'',f''\}$ are related as in the following lemma. 

\begin{lem}
We have an $\frak{sl}_2$-triple 
\[
\{h,e,f\}:=\{h'+h'', e'+e'', f'+f''\}.
\]
Let $\CO=\CO_f\in \CN_F(\Fg)_\circ$. Then $\CO$ corresponds to the sesquilinear Young tableau 
\[
(\udl{p}, \{V'_{p_i}\}) \oplus (\udl{q}, \{V''_{q_j}\})
\]
given by \eqref{directsum}.
\end{lem}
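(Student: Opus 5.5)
The plan is to view $\{h',e',f'\}$ and $\{h'',e'',f''\}$ as commuting triples living in orthogonal summands. By construction, $\{h',e',f'\}$ acts on $V'$ with $V''=S_1\otimes V'_1$ as the trivial isotypic part, so $h',e',f'$ kill $V''$. Hence, after using the involution $\theta$ of \eqref{inv}, we may regard $h',e',f'\in \Fg'=\Fg(V')$. Likewise, $\{h'',e'',f''\}\subset \Fg''=\Fg(V'')$ acts trivially on $V'$. Since $\Fg'$ and $\Fg''$ act on orthogonal complementary subspaces, they commute inside $\Fg$: for $x\in\Fg'$ and $y\in\Fg''$ we have $[x,y]=0$ as endomorphisms of $V=V'\oplus V''$. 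With this in place, the relations $[h,e]=[h',e']+[h'',e'']=2e$, $[h,f]=-2f$ and $[e,f]=[e',f']+[e'',f'']=h$ follow by expanding the brackets, since all cross terms vanish. So $\{h,e,f\}$ is an $\frak{sl}_2$-triple in $\Fg^\theta\subset\Fg$.

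For the orbit, the resulting $\frak{sl}_2$-action on $V$ is the direct sum of the action on $V'$ through $\{h',e',f'\}$ and the action on $V''$ through $\{h'',e'',f''\}$. This gives an isometric decomposition of $\frak{sl}_2$-modules
\[
V=\bigoplus_{i=1}^r S_{p_i}\otimes V'_{p_i}\ \oplus\ \bigoplus_{j=1}^s S_{q_j}\otimes V''_{q_j},
\]
which is an orthogonal direct sum. Grouping the isotypic components gives the form \eqref{isom} with multiplicity spaces $V'_{p}\oplus V''_{p}$. Here we adopt the convention that a multiplicity space is zero when $p$ does not occur, and the sum is orthogonal because $V'\perp V''$. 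The $\epsilon$-Hermitian forms on the multiplicity spaces are the ones obtained in \cite{GZ} by restricting the form to highest-weight (or lowest-weight) vectors. These forms are compatible with orthogonal direct sums. Hence the tableau attached to $f$ is exactly $(\udl p,\{V'_{p_i}\})\oplus(\udl q,\{V''_{q_j}\})$ as defined in \eqref{directsum}.

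This last step is essentially Lemma \ref{lem:tableau} applied with $f'\in\Fg(V')$ and $f''\in\Fg(V'')$, so I would simply invoke that lemma once the commuting-triple picture is set up. The only point needing care, which I expect to be the main obstacle, is the identification of $f'$ as an element of $\Fg(V')$ whose $\Fg(V')$-orbit has tableau $(\udl p,\{V'_{p_i}\})$. This also includes the even orthogonal case, where one tableau may correspond to two orbits. I would handle that case by noting that the statement only asserts correspondence with the tableau, so the ambiguity is harmless.
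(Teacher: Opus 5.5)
Your proposal is correct and follows the paper's argument. The paper likewise notes that $\{h',e',f'\}$ acts trivially on $V''$ and hence commutes with $\Fg''$, so all cross brackets vanish and $\{h,e,f\}$ is an $\frak{sl}_2$-triple; it then reads off the tableau of $\CO_f$ directly from Lemma \ref{lem:tableau}, and your isotypic-decomposition and even-orthogonal remarks only spell out what that lemma already packages.
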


\begin{proof}
Since the $\frak{sl}_2$-triple $\{h',e',f'\}$ acts trivially on $V''$, it commutes with $\Fg''$, which implies the first assertion of the lemma. The second assertion follows
from Lemma \ref{lem:tableau}.
\end{proof}

Now we impose the assumption that 
\be \label{pq}
p_r \geq q_1>1. 
\ee
The main result of this section is the following composition law for generalized Whittaker models. 

\begin{thm}[Composition Law] \label{Thm:CL}
Let the notation and assumptions be as above. Let $\pi$ be a smooth representation of $G$, which is nuclear Fr\'echet of moderate growth if $F$ is archimedean, and is genuine if $G$ is metaplectic. Then 
\[
\Wh_\CO(\pi) \cong \Wh_{\CO''}(\CJ_{\CO'}(\pi)).
\]
\end{thm}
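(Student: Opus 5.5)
The plan is to compare the two sides at the level of the unipotent groups and characters involved, and then to use the generalized root exchange (Lemma \ref{lem:exchange}) to pass from one to the other.

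\emph{Unfolding the right-hand side.} Let $U'=\exp(\Fg'_{\geq 1})$ with $\Fg'_i$ the $h'$-grading of $\Fg$, and let $U'_{1.5}$ be as in \S\ref{sec:GW} for $f'$. Similarly let $U''\subset G''$ (resp.\ $U''_{1.5}$) be attached to $f''$. By \eqref{Jac} and \cite[Lemma 2.5.2]{GGS17}, together with transitivity of coinvariants (valid in the Casselman--Wallach setting by \cite{GGS21}), we have
\[
\Wh_{\CO''}(\CJ_{\CO'}(\pi))\cong \Hom_{U''_{1.5}U'_{1.5}}(\pi,\psi_{f''}\psi_{f'}).
\]
Here $H=G(V'')$ centralizes $\{h',e',f'\}$, so it normalizes $U'_{1.5}$. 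Since $\Fg_1'$ is $H$-stable, the Lagrangian $\Fl'$ can be chosen $U''$-stable, and $U''$ then fixes $\psi_{f'}$. The group $C:=U''_{1.5}U'_{1.5}$ therefore carries the character $\psi_C:=\psi_{f''}\psi_{f'}$. On the other side, $\Wh_\CO(\pi)\cong \Hom_{U_{1.5}}(\pi,\psi_f)$, where $U$ is defined by the $h=h'+h''$ grading.

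\emph{Comparing the Lie algebras.} Decompose $\Fg=\Fg'\oplus\Fg''\oplus\Fg^{-\theta}$ using \eqref{inv}--\eqref{inv-}, and refine by the bigrading $(\ad h',\ad h'')$. On $\Fg'\oplus\Fg''$ the $h$-grading agrees with the separate gradings. On $\Fg^{-\theta}\cong V'\otimes V''$, a weight vector of bidegree $(a,b)$ has $h$-degree $a+b$, while $\Fu'$ only sees $a$. The assumption \eqref{pq}, $p_r\ge q_1$, is what controls this mismatch. The $h'$-weights on $V'$ have absolute value at least $p_r-1$ apart from the gaps forced by parity, and the $h''$-weights on $V''$ have absolute value at most $q_1-1$. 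Consequently the pieces of $V'\otimes V''$ where the sign of $a$ differs from the sign of $a+b$, or where the degree lands in the critical range $\{1,1.5,2\}$ differently, form two subspaces $\Fx=\bigoplus_i\Fx_i$ and $\Fy=\bigoplus_i\Fy_i$. Here $\Fx$ lies in $\Fg_{\ge 1.5}$ but not in $\Fu'_{1.5}$ (roughly, $a<0$ but $a+b>0$), and $\Fy$ lies in $\Fu'_{1.5}$ but not in $\Fg_{\ge1.5}$, graded by the value of $a$ (equivalently of $b$). Setting $X_i=\exp\Fx_i$ and $Y_i=\exp\Fy_i$, I expect
\[
U_{1.5}=XC_0,\qquad U''_{1.5}U'_{1.5}=C_0Y
\]
for a common group $C_0$, with $\psi_f$ and $\psi_C$ both restricting to the same character on $C_0$, because $\kappa(f'+f'',\cdot)$ kills the $-\theta$ part. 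The Lagrangians must be chosen compatibly for this to hold. The natural choice is to take the Lagrangian of $\Fg_1$ for $h$ to contain $\Fl'\cap\Fg_1\oplus\Fl''$, plus a suitable half of the $-\theta$ part.

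\emph{Checking the root exchange hypotheses.} It remains to verify hypotheses \eqref{RE1}--\eqref{RE5} of Lemma \ref{lem:exchange}.
\begin{itemize}
\item Condition \eqref{RE2} holds because $[\Fx_i,\Fy_j]\subset\Fg^\theta$ has positive degree for both $h'$ and $h''$ when $i\le j$.
\item Condition \eqref{RE3} holds because commutators with $C_0$ raise the relevant degree.
\item Condition \eqref{RE5} is the nondegeneracy of $(x,y)\mapsto\kappa(f'+f'',[x,y])$ on $\Fx_i\times\Fy_i$. This follows from $\mathfrak{sl}_2$-theory: $\ad f'$ (or $\ad f''$) maps the relevant weight spaces isomorphically, and the form on $V'\otimes V''$ is nondegenerate.
\end{itemize}
Applying Lemma \ref{lem:exchange} then gives the desired isomorphism.

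\emph{Main obstacle.} The hardest step is the bookkeeping in the second paragraph. One must order the blocks $\Fx_i$ and $\Fy_i$ so that the triangularity conditions \eqref{RE2}--\eqref{RE4} hold simultaneously, and one must handle the degree-one (Heisenberg) parts, where Lagrangians have to be matched and the odd/even parities of $p_i$ and $q_j$ interact. My first attempt would be to index by the $h'$-weight $a$ of the $V'$ factor in decreasing order, and then to check the case $p_r=q_1$ separately, since that is where degree-one pieces appear.
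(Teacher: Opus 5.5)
Your overall strategy is the paper's. You identify $\Wh_{\CO''}(\CJ_{\CO'}(\pi))$ with $\Hom_{U'_{1.5}U''_{1.5}}(\pi,\psi_{f'}\psi_{f''})$. You then pass to $\Hom_{U_{1.5}}(\pi,\psi_f)$ by Lemma~\ref{lem:exchange}, exchanging the pieces of $\Fg^{-\theta}\cong V'\otimes V''$ where the $h'$- and $h$-gradings disagree. But the substance of the proof is the construction you leave as ``I expect'', and the indexing you propose is not the one that works.

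The paper grades the blocks by $h$-degree: $\Fx_i=\Fg_{2-i}\cap\Fg'_{\ge 2}\cap\Fg^{-\theta}$ and $\Fy_i=\Fg_i\cap\Fg'_{\le 0}\cap\Fg^{-\theta}$. So the blocks on the $U_{1.5}$-side include $a=0$ (e.g.\ $(a,b)=(0,2)$), not only $a<0$. The common part is $\Fc=\Fg''_{\ge 1.5}\oplus(\Fg_{\ge 2}\cap\Fg'_{\ge 2})\oplus\Fl'$, with $\Fl'$ chosen $h''$-stable and containing $\Fg'_1\cap\Fg_{\ge 2}$, so the $a=1$ pieces never need to be exchanged. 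Then $[\Fx_i,\Fy_j]\subset\Fg_{2+j-i}\cap\Fg^{\theta}$. Hence conditions \eqref{RE2}--\eqref{RE4}, and the vanishing of $\kappa(f,[\Fx_i,\Fy_j])$ for $i<j$, are pure degree counting. Indexing by the $h'$-weight $a$ does not give this. The form $\kappa(f,[x,y])$ couples $a$ with $2-a$ through $f'$ and with $-a$ through $f''$, so the pairing is not block-diagonal, and you would have to show the cross terms fall on the side the lemma tolerates.

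Degree-one pieces of $\Fg^{-\theta}$ are also unrelated to $p_r=q_1$: they occur exactly when some $p_i$ and some $q_j$ have opposite parity. Your caution about them is justified, and it applies to the paper's literal choice too. For $V'=S_3$, $V''=S_2$ (unitary), the $(a,b)=(0,1)$ part of $\Fy_1$ is paired with itself by $\kappa(f'',[\cdot,\cdot])$, so $\Fy_1$ is neither abelian nor isotropic. A clean remedy is to take $\Fx_1\oplus(\Fl'\cap\Fg_1)\oplus\Fl''$ as the Lagrangian of $\Fg_1$. It is isotropic by degree counting and has half the dimension because $\dim\Fx_1=\dim\Fy_1$. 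Then $\Fx_1$ is common to both sides and only the blocks with $i\ge 2$ are exchanged.

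The second missing ingredient is \eqref{RE5}, which is not a statement about $\ad f'$ or $\ad f''$ separately. The paper shows that the full $\ad f$ maps $\Fx_i$ isomorphically onto ${}^c\Fy_i$.
Injectivity comes from the lowest $h'$-component: $\ad f'$ is injective on $\Fg'_j$ for $j\ge 1$, and $\ad f''$ preserves $h'$-degree.
Surjectivity is a dimension count, using that $\ad e$ is injective on ${}^c(\Fy_i\cap\Fg'_{\le -1})=\Fg_{-i}\cap\Fg'_{\ge 1}\cap\Fg^{-\theta}$. This needs the $h'$-weights there to be at most $p_r-2$, and that is exactly where \eqref{pq} enters: $a+b=-i<0$ and $|b|\le q_1-1$ force $a\le q_1-2\le p_r-2$.

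Your account of the role of \eqref{pq} is incorrect. The $h'$-weights of $S_p$ run from $p-1$ down to $1-p$, so they are not bounded below in absolute value by $p_r-1$. What matters is that every $\mathfrak{sl}_2(e',h',f')$-string in $V'\otimes V''$ has highest weight at least $p_r-1$. Without this argument you do not get $\dim\Fx_i=\dim\Fy_i$, hence no nondegenerate pairing and no exchange.
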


In the rest of this section, we  prove  Theorem \ref{Thm:CL} and give  Corollary \ref{cor:wf}.  
Keep the notation in \eqref{grade} and \eqref{inv} and write 
\[
\Fg'_i:=\set{ x\in \Fg | [h',x]=ix}\quad{\rm and}\quad \Fg''_i:=\set{x\in \Fg'' | [h'',x]=ix}, \quad {\rm for}\ i\in \BZ.
\]
Similar to \eqref{grade}, define $\Fg'_{\geq i}, \Fg'_{\leq i}, \Fg''_{\geq i}$ and $\Fg''_{\leq i}$ in the obvious way. We may fix an $h''$-stable Lagrangian subspace $\Fl' \subset \Fg'_1$ such that
$\Fg'_1\cap \Fg_{\geq 2}\subset \Fl'$, and also fix a Lagrangian subspace $\Fl'' \subset \Fg_1''$. Put 
\[
\Fg'_{\geq 1.5}:=\Fl' \oplus \Fg'_{\geq 2},\qquad \Fg''_{\geq 1.5}:=\Fl''\oplus \Fg''_{\geq 2}.
\]

We introduce
\begin{align*}
\Fc:&= \Fg''_{\geq 1.5}\oplus (\Fg_{\geq  2}\cap \Fg'_{\geq 2})\oplus \Fl' \\
& =\left ( (\Fg'_{\geq 1.5}\oplus \Fg''_{\geq 1.5})\cap \Fg^\theta \right) \oplus \left(\Fg_{\geq 2}\cap \Fg'_{\geq 2} \cap \Fg^{-\theta}\right)\oplus \Fl',
\end{align*}
where $\Fg^\theta$ and $\Fg^{-\theta}$ are as in \eqref{inv} and \eqref{inv-} respectively. For $1\leq i\leq p_1-2$, define
\begin{align*}
\Fx_i:& =\Fg_{2-i}\cap \Fg'_{\geq 2}\cap \Fg^{-\theta}, \\
\Fy_i:&=\Fg_{i}\cap\Fg'_{\leq 0}\cap\Fg^{-\theta}, \\
\Fc_i:& =\bigoplus_{k=1}^{i-1}\Fx_k\oplus \Fc \oplus \bigoplus_{k=i+1}^{p_1-2}\Fy_k.
\end{align*}
Put $C= \exp(\Fc)$, $X_i =\exp(\Fx_i)$, $Y_i=\exp(\Fy_i)$, and $\psi_C:=\psi_f|_C$. 

We shall verify that $C, X_i$, $Y_i$ and $\psi_C$ satisfy the conditions in Lemma \ref{lem:exchange}. It is straightforward to verify condition \eqref{RE1}. By the above definitions, for $ 1\leq i\leq p_1-2$ we have
\begin{align*}
& [\Fx_i,\Fy_j]\subset \Fg_{j-i+2}\cap \Fg^\theta \subset \Fc \qquad \text{ if }  1\leq i\leq j\leq p_1-2,\\ 
& [\Fx_i, \Fc]=[\Fx_i, (\Fg'_{\geq 1.5}\oplus \Fg''_{\geq 1.5})\cap \Fg^\theta]\subset \Fg_{\geq 3-i}\cap \Fg'_{\geq 2}\cap \Fg^{-\theta} \subset \Fc\oplus \bigoplus_{k<i}\Fx_k, \\ 
&[\Fy_i, \Fc]\subset \left(\Fg_{\geq i+1}\cap \Fg^{-\theta}\right)\oplus \left(\Fg_{\geq i+2}\cap \Fg^\theta\right) \subset \Fc\oplus \bigoplus_{k>i}\Fy_k.
\end{align*}
This validates conditions \eqref{RE2}, \eqref{RE3} and \eqref{RE4}. 

For a subspace $\Fs\subset \Fg$, denote by ${}^c \Fs$ the image of $\Fs$ under the Chevalley involution on $\Fg$. Then condition \eqref{RE5}  is immediate from the following lemma. 

\begin{lem} For $1\leq i\leq p_1-2$, the map $\ad \,f:\Fg\to\Fg$ restricts to an isomorphism  
\[
\ad\, f|_{\Fx_i}: \Fx_i \to {}^c \Fy_i,\quad x\mapsto [f,x].
\]
\end{lem}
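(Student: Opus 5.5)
The plan is to compute both sides explicitly inside $\Fg^{-\theta}\cong V'\otimes V''$ using the joint weight decomposition for the two commuting $\frak{sl}_2$-triples $\{h',e',f'\}$ and $\{h'',e'',f''\}$. Since $f=f'+f''\in\Fg^\theta$, $\ad f$ preserves $\Fg^{-\theta}$. Since $[h,f]=-2f$, $\ad f$ maps $\Fg_{2-i}$ into $\Fg_{-i}$. Under \eqref{inv-}, $\ad f$ acts on $V'\otimes V''$ as $f'\otimes 1+1\otimes f''$. The first factor lowers the $h'$-weight by $2$, and the second factor preserves it.

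\textbf{Step 1: identify ${}^c\Fy_i$.} I would choose the Chevalley involution compatibly with the data. Concretely, it should commute with $\theta$, send $h'\mapsto -h'$ and $h''\mapsto -h''$, and hence $h\mapsto -h$. One way is to take it to act on each block $S_{p_i}\otimes V'_{p_i}$ and $S_{q_j}\otimes V''_{q_j}$ through the Weyl element of the corresponding $\frak{sl}_2$. With this choice,
\[
{}^c\Fy_i=\Fg_{-i}\cap\Fg'_{\geq 0}\cap\Fg^{-\theta}.
\]
The inclusion $\ad f(\Fx_i)\subset{}^c\Fy_i$ then follows from the weight bookkeeping above. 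The $h$-weight drops from $2-i$ to $-i$. Each component of $x$ has $h'$-weight $a\geq 2$, and its image has components of $h'$-weight $a$ or $a-2$, both $\geq 0$.

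\textbf{Step 2: injectivity.} Decompose $V'\otimes V''$ into blocks $(S_{p}\otimes S_{q})\otimes(V'_p\otimes V''_q)$, refined into joint weight spaces of bidegree $(a,b)$ for $(h',h'')$. Let $x\in\Fx_i$ be nonzero, and let $a_0$ be the smallest $h'$-weight occurring in $x$, with component $x_{a_0}$. The $h'$-weight $a_0-2$ component of $[f,x]$ receives contributions from $f'$ applied to $x_{a_0}$, and from $f''$ applied to a component of $h'$-weight $a_0-2$. The latter is zero by minimality of $a_0$. Hence this component equals $(f'\otimes 1)(x_{a_0})$. This is nonzero because $f'$ is injective on $h'$-weight spaces of positive weight, and $a_0\geq 2$. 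So $\ad f|_{\Fx_i}$ is injective.

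\textbf{Step 3: dimension count.} In each block, the $(a,b)$-weight space of $S_p\otimes S_q$ is one-dimensional whenever $a$ and $b$ are weights. The weights of $S_p$ form the string $-(p-1),-(p-3),\dots,p-1$, and those of $S_q$ form the analogous string. The shift $(a,b)\mapsto(a-2,b)$ is a bijection from
\[
\{(a,b):a\geq 2,\ a+b=2-i\}\quad\text{onto}\quad\{(a,b):a\geq 0,\ a+b=-i\}.
\]
This works because the weights of $S_p$ are spaced by $2$, so $a\geq 2$ a weight implies $a-2$ is a weight, and conversely $a\geq 0$ a weight implies $a+2\leq p-1$ is a weight except when $a=p-1$. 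I expect this boundary case $a=p-1$ to be the main obstacle. Such a weight would force $b=-i-(p-1)\geq-(q-1)$, that is, $i\leq q-p$. This is impossible, since $i\geq 1$ and $p\geq p_r\geq q_1\geq q$ by \eqref{pq}. Therefore $\dim\Fx_i=\dim{}^c\Fy_i$ block by block, and together with Step 2 this proves the isomorphism.

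Two side issues need care: arranging the Chevalley involution compatibly in the Hermitian (unitary) case, and the fact that $\Fg^{-\theta}\cong V'\otimes V''$ may only hold as an $E$-form, possibly after restriction of scalars. Both only affect dimensions by a uniform factor, so the argument can be run after extending scalars to $\ol F$.
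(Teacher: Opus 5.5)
Your argument is correct. Step 2 is the same injectivity argument the paper uses: take the component of lowest $h'$-weight and use that $\ad f'$ is injective off lowest-weight vectors. Your description ${}^c\Fy_i=\Fg_{-i}\cap\Fg'_{\geq 0}\cap\Fg^{-\theta}$ is also the one the paper uses implicitly.

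The difference lies in how you get the reverse dimension inequality. The paper separates off $\Fx_i\cap\Fg'_{\geq 3}$ and ${}^c(\Fy_i\cap\Fg'_{\leq -1})=\Fg_{-i}\cap\Fg'_{\geq 1}\cap\Fg^{-\theta}$. It then produces a second injection in the opposite direction by raising with $e$, using \eqref{pq} to see that every $h'$-weight occurring there is at most $p_r-2$, so no highest-weight vectors are hit. The $h'$-weight $2\to 0$ piece is handled by the separate isomorphism \eqref{iso2}. You instead decompose $\Fg^{-\theta}$ into the irreducible $\frak{sl}_2\times\frak{sl}_2$-blocks $S_p\otimes S_q$ and count joint weights directly. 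In your version \eqref{pq} enters at exactly the same point, namely ruling out $a=p-1$.

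The paper's route is coordinate-light, needing only that $\ad e$ and $\ad f$ are injective away from extremal weights. Yours is more explicit and treats all of $\Fx_i$ at once. That spares you the bookkeeping the paper's splitting implicitly requires: $\ad f''$ and $\ad e''$ preserve $h'$-weight, so the pieces $\Fg'_{\geq 3}$ and $\Fg'_2$ are only respected up to a block-triangular correction.
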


\begin{proof} 
It is straightforward to show that $\ad\, f|_{\Fx_i}$ is injective. 
Indeed, take any elements $x\in \Fg_j'$ and $u\in \Fg'_{\geq j+1}$ with $j\geq 1$ and $x\neq 0$. Then $[f',x] \in \Fg_{j-2}'$ and is nonzero because 
$x$ is not a lowest weight vector under the action of the $\frak{sl}_2$-triple $\{h',e',f'\}$. From $[f',u]\in \Fg'_{\geq j-1}$ and 
$[f'',x+u]\in \Fg'_{\geq j}$, we conclude that 
\[
[f,x+u]=[f',x]+[f',u]+[f'',x+u]\neq 0.
\]
This proves the injectivity of $\ad f|_{\Fx_i}$. 

Consider the decompositions 
\[
\Fx_i = (\Fx_i \cap \Fg'_{\geq 3}) \oplus (\Fx_i\cap \Fg_2')\quad \text{and} \quad  \Fy_i=(\Fy_i\cap \Fg'_{\leq -1})\oplus (\Fy_i\cap \Fg'_{0}).
\]
We first prove that $\ad\, f$ restricts to an isomorphism 
\be \label{iso1}
\ad\,f: \Fx_i \cap \Fg'_{\geq 3} \to {}^c(\Fy_i\cap \Fg'_{\leq -1}). 
\ee
The map \eqref{iso1} is well-defined, noting that
\[
[f, \Fx_i\cap \Fg'_{\geq 3}] = [f, \Fg_{2-i}\cap \Fg'_{\geq 3}\cap \Fg^{-\theta}] \subset \Fg_{-i}\cap \Fg'_{\geq 1}\cap\Fg^{-\theta} = {}^c(\Fy_i\cap \Fg'_{\leq -1}).
\]
By a similar argument as above, the map 
\[
\ad\, e:  \Fg_{-i}\cap \Fg'_{\geq 1}\cap \Fg'_{\leq p_r-2}\cap \Fg^{-\theta} \to \Fx_i\cap \Fg'_{\geq 3}
\]
is also injective. Due to the assumption $p_r\geq q_1$ in \eqref{pq}, we have 
\[
\Fg_{-i}\cap \Fg'_{\geq 1}\cap \Fg^{-\theta}\subset \Fg'_{\leq p_r-2}\quad \text{for}\quad  i\geq 1.
\]
It follows that the map
\[
\ad\,e: {}^c(\Fy_i\cap \Fg'_{\leq -1})= \Fg_{-i}\cap \Fg'_{\geq 1}\cap \Fg^{-\theta} \to \Fx_i\cap \Fg'_{\geq 3}
\]
is injective. Hence \eqref{iso1} is an isomorphism. Moreover, the map
\be \label{iso2}
\ad\,f: \Fx_i\cap \Fg'_{2}\to 
{}^c(\Fy_i\cap \Fg'_0)
\ee
is also an isomorphism. 

Combining the isomorphisms \eqref{iso1} and \eqref{iso2} proves the lemma. 
\end{proof}

It is easy to see that 
\[
\Fc\oplus \bigoplus^{p_1-2}_{i=1} \Fx_i = \Fg'_{\geq 1.5}\oplus \Fg''_{\geq 1.5}
\quad
\text{and}
\quad
\Fc\oplus \bigoplus^{p_1-2}_{i=1}\Fy_i = \Fl \oplus \Fg_{\geq 2}=:\Fg_{\geq 1.5} 
\]
for a Lagrangian subspace $\Fl\subset \Fg_1$. 
Applying the root exchange Lemma \ref{lem:exchange}, we obtain that 
\[
\Wh_{\CO''}(\CJ_{\CO'}(\pi))\cong \Hom_{U'_{1.5}U''_{1.5}}(\pi, \psi_{f'}\psi_{f''})\cong \Hom_{U_{1.5}}(\pi, \psi_f)\cong \Wh_\CO(\pi),
\]
where $U'_{1.5} :=\exp(\Fg'_{\geq 1.5})$ and $U''_{1.5}:=\exp(\Fg''_{\geq 1.5})$. This proves Theorem \ref{Thm:CL}.

\

By induction, we have the following consequence of Theorem \ref{Thm:CL}.

\begin{cor} \label{cor:wf}
Assume that $\pi\in \Pi_F(G)$ has a generic local $L$-parameter. Then 
\[
\wf_\des(\pi)\subset \wf_\wm(\pi). 
\]
\end{cor}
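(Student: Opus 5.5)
We argue by induction on $\Fn=\dim_E V$, or equivalently on the length $r$ of the chain of descents defining an $L$-descent orbit. Let $\CO=(p_1,V_{p_1})\oplus\cdots\oplus(p_r,V_{p_r})\in\wf_\des(\pi)$, obtained from a descent $\sigma\in\Pi_F(H)$ of $\pi$ along $(p_1,V_{p_1})$, with $\sigma$ having generic $L$-parameter and $\CO'':=(p_2,V_{p_2})\oplus\cdots\oplus(p_r,V_{p_r})\in\wf_\des(\sigma)$. If $p_1=1$, then $\CO=\{0\}$ and $\Wh_{\{0\}}(\pi)=\Hom_G(\pi,\BC)^{\vee}$-type statement is replaced by the trivial fact that $\pi\neq0$, i.e.\ $\CJ_{\{0\}}(\pi)=\pi\neq 0$. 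So we may assume $p_1>1$.

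By induction applied to $\sigma$ (which lives on the smaller space $V''$ and has generic parameter), $\CO''\in\wf_\wm(\sigma)$, i.e.\ $\CJ_{\CO''}(\sigma)\neq0$. Since $\sigma$ is a quotient of $\CJ_{\CO_{p_1,a_1}}(\pi)$ and the twisted Jacquet functor $\CJ_{\CO''}$ is right exact (coinvariants of a Schwartz/smooth tensor product; in the archimedean case one uses that Hausdorff coinvariants of an $H$-surjection remain surjective onto a nonzero space), we get a surjection $\CJ_{\CO''}(\CJ_{\CO_{p_1,a_1}}(\pi))\twoheadrightarrow\CJ_{\CO''}(\sigma)\neq0$, hence $\Wh_{\CO''}(\CJ_{\CO_{p_1,a_1}}(\pi))\neq0$ by Definition \ref{algwf}.

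Now apply Theorem \ref{Thm:CL} with $\CO'=\CO_{p_1,a_1}$, i.e.\ $\udl p=[p_1]$ (so $r=1$ in the notation of Section \ref{ssec:CL}), and $\CO''$ in $\Fg''=\Fg(V'')$ with $q_1=p_2$. The hypothesis \eqref{pq}, $p_r\ge q_1$, becomes $p_1\ge p_2$, which is exactly condition \eqref{item:decreasing} of Definition \ref{dfn:DWS}. The remaining requirement $q_1>1$ fails only when $\CO''=\{0\}$, in which case $\Wh_{\CO''}$ of $\CJ_{\CO'}(\pi)$ is just $\Wh_{\CO'}(\pi)$ and $\CO=\CO'$, so nothing is needed. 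Otherwise Theorem \ref{Thm:CL} gives $\Wh_\CO(\pi)\cong\Wh_{\CO''}(\CJ_{\CO'}(\pi))\neq0$, where the orbit $\CO$ of $f'+f''$ is identified with $(p_1,V_{p_1})\oplus\CO''$ by Lemma \ref{lem:tableau}. Hence $\CO\in\wf_\wm(\pi)$.

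The main point requiring care is the exactness step: passing from nonvanishing of $\CJ_{\CO''}(\sigma)$ to nonvanishing of $\CJ_{\CO''}$ of the whole Jacquet module, together with the fact that $\CJ_{\CO'}(\pi)$ is a nuclear Fréchet representation of moderate growth so that Theorem \ref{Thm:CL} applies; both follow from the properties of Hausdorff coinvariants recalled in Section \ref{sec:GW}.
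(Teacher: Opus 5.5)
Your proposal is correct and follows the paper's proof. Both argue by induction on $\Fn$, use the induction hypothesis $\CO''\in\wf_\wm(\sigma)$, pass nonvanishing from the quotient $\sigma$ to $\CJ_{\CO_{p_1,a_1}}(\pi)$, and then apply Theorem~\ref{Thm:CL} with $\udl{p}=[p_1]$, where $p_1\ge p_2$ supplies \eqref{pq}. The paper simply pulls back a nonzero element of $\Wh_{\CO''}(\sigma)$ along the quotient map, which is more direct than your right-exactness argument for coinvariants; your separate treatment of the degenerate cases $p_1=1$ and $p_2=1$, where Theorem~\ref{Thm:CL} does not literally apply, is a useful addition that the paper leaves implicit. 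One small slip: for the zero orbit $U$ is trivial, so $\Wh_{\{0\}}(\pi)=\Hom(\pi,\BC)$ with no $G$-equivariance.
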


\begin{proof}
We prove the assertion by induction on $\Fn$, which can be verified easily if $\Fn\leq 2$. Let 
    \[
    \CO=(p_1, V_{p_1})\oplus (p_2, V_{p_2})\oplus \cdots \oplus (p_r, V_{p_r})\in \wf_\des(\pi)
    \]
    be an $L$-descent orbit of $\pi$ as in Definition \ref{dfn:DWS}, where $\sigma_1$ is a descent of $\pi$ along $(p_1, V_{p_1})$, i.e. $\sigma_1$ is a quotient representation 
    of $\CJ_{\CO_{p_1,a_1}}(\pi)$ with $a_1:=\disc \, V_{p_1}$. By definition, 
    \[
    \CO'':=(p_2, V_{p_2})\oplus \cdots \oplus (p_r, V_{p_r})\in \wf_\des(\sigma_1)
    \]
   is an $L$-descent orbit of $\sigma_1$. Thus by induction hypothesis, it holds that $\CO''\in \wf_\wm(\sigma_1)$, i.e. $\Wh_{\CO''}(\sigma_1)\neq \{0\}$. It follows that 
   \[
   \Wh_{\CO''}(\CJ_{\CO_{p_1,a_1}}(\pi))\neq \{0\},
   \]
   hence by Theorem \ref{Thm:CL} we obtain that $\Wh_\CO(\pi)\neq \{0\}$, i.e. $\CO\in \wf_\wm(\pi)$. This finishes the proof of the corollary.
\end{proof}

\section{Arithmetic Wavefront Set: the Archimedean Case} \label{sec:AWF}

We consider real classical groups in the rest of this paper. In this section, we recall the algorithm of \cite{JLZ25} that computes the maximal elements ${\rm WF}_{\rm ari}(\pi)^{\max}$ in terms of enhanced $L$-parameters. We refer to {\it loc. cit.} for more details. 

\subsection{Enhanced $L$-parameters} \label{sec:ELP}

Let $E =\BR$ or $\BC$, and write ${\rm Gal}(E/\mathbb{R})=\langle c \rangle$.  Let $V$ be an $\Fn$-dimensional  $\epsilon$-hermitian vector space over $E$ with $\epsilon = \pm 1$,  
i.e.  $V$ is equipped with a non-degenerate pairing
\[
\langle\, ,\,\rangle: V\times V\to E
\]
that is linear in the first variable, $c$-linear in the second variable, and satisfies
\[
\langle y, x\rangle = \epsilon\cdot c ( \langle x, y\rangle), \quad x, y\in V.
\]

Let $G$ be the group of real points of the connected algebraic group ${\rm Isom}(V)^\circ$,
or the metaplectic double cover of $\Sp(V)$ in the case that 
 $E=\mathbb{R}$ and $\epsilon=-1$. 
 Then 
\be \label{cg}
G=\RU(p,q),\quad \SO(p,q),\quad {\rm Sp}_{2n}(\mathbb{R}) \quad \textrm{or}\quad  {\rm Mp}_{2n}(\mathbb{R}).
\ee
Fix a quasi-split pure inner form $G^*$ of $G$. Note that $G^*$ is unique unless $G =\RU(p,q)$ with $p+q=2n+1$, or $G=\SO(p,q)$
with $p+q=2n$ and $p\equiv n+1 ({\rm mod}\ 2)$.
Set $\frak n = \dim_E V (=p+q \textrm{ or }2n)$. In the first two cases of \eqref{cg}, we also write $G=\RU_{\frak n}$ or $\SO_{\frak n}$ for convenience and for the consistency of notation with that in \cite{JLZ25}.
Define
\be \label{disc}
{\rm disc}(V):=(-1)^{\frac{(p-q)(p-q-1)}{2}} \in \{\pm 1\}.
\ee

The $L$-group of $G$ is ${}^LG=G^\vee\rtimes W_\BR$, where $G^\vee$ is the complex dual group of $G$ and $W_\mathbb R$ is the real Weil group.
An $L$-parameter for $G$ is an admissible homomorphism $W_\BR\to {}^LG$.
Following \cite{GGP1, JLZ25}, we realize $L$-parameters as $G$-relevant conjugate self-dual semisimple representations of $W_E$.

When $E=\mathbb{C}$, for $\alpha\in \BZ$ define the character 
\[
\varsigma_\alpha: W_\BC = \BC^\times \to \BC^\times, \quad z\mapsto \left(|z|/z \right)^\alpha.
\]
When $E=\mathbb{R}$, for $\alpha \in \BZ$ define the two-dimensional representation 
$
\sigma_\alpha:={\rm Ind}^{W_\mathbb{R}}_{W_\mathbb{C}}\,\varsigma_\alpha
$
of $W_\mathbb{R}$. 
Then $\sigma_\alpha\cong \sigma_{-\alpha}$  is irreducible when $\alpha\neq 0$, and 
$
\sigma_0 \cong \mathbbm{1}\oplus \sgn,
$
where $\mathbbm{1}$ and ${\rm sgn}$ denote the trivial and sign characters of $\mathbb{R}^\times$ respectively, viewed as characters of $W_\mathbb{R}$ via local class field theory. 
Set
\[
\rho_\alpha:=\begin{cases} \varsigma_\alpha, & \textrm{if }E=\mathbb{C}, \\
\sigma_\alpha, & \textrm{if }E=\mathbb{R}.
\end{cases}
\]
 
 A generic $L$-parameter of $G^*$ can be viewed as a conjugate self-dual semisimple representation of  $W_E$ of the form 
\be \label{varphi}
\varphi:=\tau \oplus \varphi_{\rm gp} \oplus {}^c\tau^\vee
\ee
where ${}^c\tau^\vee$ is the conjugate dual of $\tau$, and $\varphi_{\rm gp}$ is the direct sum of all irreducible conjugate self-dual summands of $\varphi$ having the same parity as $\varphi$.

More precisely, 
\be \label{gp}
\varphi_{\rm gp} =  m_1\rho_{\alpha_1}\oplus m_2 \rho_{\alpha_2}\oplus \cdots\oplus m_r\rho_{\alpha_r} \oplus \varphi_{\rm quad},
\ee
where 
\begin{itemize}
\item $m_1, \ldots, m_r$ are positive integers;
\item $\alpha_1<\alpha_2<\cdots <\alpha_r$ are integers satisfying the following conditions:  they are nonnegative when $E=\mathbb{R}$; 
even when $G^\vee$ is orthogonal; 
odd when $G^\vee$ is symplectic, and of the same parity as ${\frak n}-1$ when $G^* =\RU_{\frak n}$;
\item $\varphi_{\rm quad} = \sgn^{m_1+\cdots+m_r}$ when $G^* ={\rm Sp}_{2n}(\mathbb{R})$ and zero otherwise;
\item $\det\varphi(-1) = {\rm disc}(V)$ when $G^*=\SO_{2n}$;
\item $\dim \varphi$ is the dimension of the natural representation of $G^\vee$.
\end{itemize}
For convenience, in the unitary group case, if we drop the assumption that $\varphi$ has parity $(-1)^{{\frak n}-1}$ in the above definition, then we refer to $\varphi$ as a {\it modified $L$-parameter}.

We may assume that $\tau$ in \eqref{varphi} is of the form
\be \label{tau}
\tau = |\cdot |_E^{s_1} \tau_1 \oplus \cdots \oplus |\cdot |^{s_l}_E \tau_l,
\ee
where $\tau_1,\ldots, \tau_l$ are irreducible and bounded, and $s_1\geq \cdots \geq s_l\geq 0$. 

Associated to $\varphi$ is the component group $\CS_\varphi \cong \CS_{\varphi_{\rm gp}}\cong \BZ_2^r$, and an {\it enhanced $L$-parameter} is a pair $(\varphi,\chi)$ where $\chi\in \widehat{\CS_\varphi}$. 
Thus $\chi$ is determined by a sequence of signs 
\begin{equation} \label{eq:chi-alpha}
\chi(\alpha_1), \ldots, \chi(\alpha_r).
\end{equation} 
Fixing the Whittaker datum of $G^*$ as in \cite[Appendix]{AA}, the local Langlands correspondence established through \cite{Ar, Mok, KMSW, AGI+} yields a bijection 
\[
\widehat{\CS_\varphi} \xrightarrow{\ \sim \ }\Pi_\varphi[G^*],  
\qquad \chi \longmapsto \pi(\varphi,\chi),
\]
where the Vogan packet $\Pi_\varphi[G^*]$ of $\varphi$ is a disjoint union of irreducible representations of pure inner forms of $G^*$.

\subsection{Arithmetic wavefront set} \label{ssec:AWF} 

For the proof of the main result, we will reduce to the case that $\pi$ is a discrete series representation; see Section \ref{sec:AWFAV}.
To keep notation minimal, we only explicate ${\rm WF}_{\rm ari}(\pi)^{\max}$ for the discrete $L$-parameters introduced in \cite{JLZ25}.
We refer to \cite[Section~5.1]{JLZ25} for the complete definition of ${\rm WF}_{\rm ari}(\pi)$.

Assume that $\varphi$ is a discrete  $L$-parameter of $G$, so that $\varphi = \varphi_{\rm gp}$ and is multiplicity-free, that is,
\begin{equation*}\label{eq:varphi-discrete}
\varphi = \rho_{\alpha_1} \oplus \cdots \oplus \rho_{\alpha_n} \oplus \varphi_{\rm quad},
\end{equation*}
where $\alpha_1<\cdots <\alpha_n$ and 
\[
n := \left\lfloor \dfrac{\frak n}{[\BC: E]}\right\rfloor \quad (\text{$\lfloor\cdot\rfloor$ denotes the integer part and ${\frak n}=\dim_{E}V$}),
\]
so that $n$ is the rank of $G^\vee$.
In particular $\CS_\varphi \cong \BZ_2^n$. 

In this case $G$ is of equal rank (see \cite[Lemma~1.6]{A}), excluding the pure inner class of $\SO(p,q)$ with $p,q$ odd.
For a discrete series $\pi=\pi(\varphi,\chi)$ of $G$, the set ${\rm WF}_{\rm ari}(\pi)^{\max}$ is a singleton consisting of a real nilpotent orbit
$\CO(\varphi,\chi)$ parametrized by a signed Young tableau.
Following \cite[Section~9]{JLZ25}, we write
\begin{equation}\label{eq:awf}
\CO(\varphi,\chi)=((p_1,\epsilon_1),\dots,(p_k,\epsilon_k)),
\end{equation}
where $p_1\ge \cdots \ge p_k$ is a partition of $\frak n$ and $\epsilon_i\in\{\pm\}$, subject to:
\begin{itemize}
\item
$p_i$'s are odd if $G= \SO(p,q)$ and even if $G = {\rm Sp}_{2n}(\BR)$ or ${\rm Mp}_{2n}(\BR)$;

\item each row $(p_i, \epsilon_i)$ consists of $p_i$ alternating signs ending with $\epsilon_i$; 

\item there are $p$ pluses and $q$ minuses in $\CO(\varphi,\chi)$ if $G=\RU(p, q)$ or $\SO(p,q)$. 
\end{itemize}

The algorithm of \cite{JLZ25} computes $\CO(\varphi,\chi)$ inductively via consecutive descents of enhanced $L$-parameters.
Motivated by the local Gan--Gross--Prasad conjecture (see \cite{GGP1}), we introduced the \emph{first descent} of $(\varphi,\chi)$, denoted $\CD_{p_1}(\varphi,\chi)$, which is a set of enhanced discrete (modified) $L$-parameters $(\phi,\chi')$ of opposite parity (\cite[Theorem 5.3]{JLZ25} and \cite[Theorem 1.5]{CJLZ}). The obtained set 
$\CD_{p_1}(\varphi,\chi)$ can be explicated as follows.

We first define the {\it sign alternating set} ${\rm sgn}(\chi)$. 
Define
\be \label{sgn}
{\rm sgn}(\chi):= \begin{cases} \set{ 1\leq i <n | \chi(\alpha_{i+1}) = - \chi(\alpha_i)}, & \textrm{if }G^* \neq \SO_{2n+1}, \\
 \set{ 0\leq i <n | \chi(\alpha_{i+1}) = - \chi(\alpha_i)}, &  \textrm{if }G^*  = \SO_{2n+1},
 \end{cases} 
\ee
where in the case $G^*=\SO_{2n+1}$ we set $\alpha_0:=-1$ and $\chi(\alpha_0):=1$.
Write
\[
s_\chi:= \#\sgn(\chi),
\]
and 
\[
\sgn(\chi)= \set{ i_1< i_2< \cdots < i_{s_\chi}}. 
\]
For convenience, we record the first row $(p_1,\epsilon_1)$ of $\CO(\varphi,\chi)$ and the elements $(\phi,\chi')\in \CD_{p_1}(\varphi,\chi)$
for each group type in the following form.
Define
\be \label{phi}
\phi = \begin{cases}
\rho_{\beta_1}\oplus \cdots \oplus \rho_{\beta_{s_\chi}} &\text{ if } G^*\ne \Mp_{2n}(\BR),\\ 
\rho_{\beta_1}\oplus \cdots \oplus \rho_{\beta_{s_\chi}} \oplus \sgn^{s_\chi}
 &\text{ if } G^*= \Mp_{2n}(\BR),
\end{cases}
\ee
with $\alpha_{i_j}<\beta_j <\alpha_{i_j+1}$ for all $1\leq j\leq s_\chi$.
The signs in $\chi'$ are the values $\chi'(\beta_j)$, arranged analogously to \eqref{eq:chi-alpha}. The relevant 
data for the first descent are summarized as follows:

\begin{table}[!htbp]
\begin{tabular}{cccc}
\hline 
$G^*$ & $p_1$ & $\epsilon_1$ & $\chi'$\\ \hline 
$\RU_n$& $ n- s_\chi$ & $\chi_n $ &$((-1)^{n - j + 1} \chi_n :  j \in \sgn(\chi))$\\ \hline
$\SO_{2n+1}$& $2(n-s_\chi)+1$ & ${\rm disc}(V)\chi_n$ & $((-1)^{j - 1} \delta_0 \chi_n :  j \in \sgn(\chi))$\\ \hline
$\SO_{2n}$& $2(n-s_\chi)-1$ & $\chi_n$ & $((-1)^j :  j \in \sgn(\chi))$\\ \hline
$\Sp_{2n},~\Mp_{2n}$& $2(n-s_\chi)$ & $\chi_n$ & $((-1)^{n - j + 1} \chi_n :  j \in \sgn(\chi))$\\ \hline
\end{tabular}

\quad

\caption{The first row and first descent}\label{tab:descent}
\end{table}
Here $\chi_n:=\chi(\alpha_n)$ and the last column $\chi'$ is called the {\it $L$-descent} of $\chi$.
In addition, to parametrize the pure inner class when $G^*=\SO_{2n+1}$, we introduce the auxiliary sign
\be \label{delta0}
\delta_0:= (-1)^n{\rm disc}(V) \qquad (\text{see }\eqref{disc}).
\ee 
The auxiliary sign for $\SO_{2s_\chi+1}$, the descent of $G^*=\SO_{2n}$, is given by
\[
\delta_0' =(-1)^{n-s_\chi-1}\chi_n.
\]  

Next, we apply the same procedure to $(\phi,\chi')$ to produce the second row $(p_2,\epsilon_2)$, which is independent of the choice of the $\beta_j$ in \eqref{phi}.
Finally, by induction, we obtain the signed Young tableau
\[
\CO(\varphi,\chi)=((p_1,\epsilon_1),(p_2,\epsilon_2),\dots,(p_k,\epsilon_k)).
\]
More detailed discussions on our algorithm will be given in the next section via examples. 

\subsection{Examples of $L$-parameter descent}
To illustrate the algorithm in Section~\ref{ssec:AWF}, we give examples for unitary and orthogonal groups.
More examples can be executed on the website \cite{Luo}.

\subsubsection{Unitary groups} \label{ssec:AW-U}

We first carry out the inductive algorithm for a discrete enhanced $L$-parameter $(\varphi,\chi)$ for $\RU(10,8)$.
Following the notation in \eqref{gp} and \eqref{eq:chi-alpha}, write
\[
\varphi=\rho_{\alpha_1} \oplus \cdots \oplus \rho_{\alpha_{18}} 
\]
where $\alpha_1<\alpha_2<\cdots<\alpha_{18}$ are odd integers, and
the character $\chi$ of $\CS_{\varphi}$ as
\[
-+-+--+-+--+-++---.
\]
First, we compute the first descent $\CD_{p_1}(\varphi,\chi)$ of $(\varphi,\chi)$ with $p_1=6$.
By \eqref{sgn}, we have 
\[
{\rm sgn}(\chi)=\{1,2,3,4,6,7,8,9,11,12,13,15\}.
\]
By Table \ref{tab:descent}, $\CD_{p_1}(\varphi,\chi)$ consists of 
modified $L$-parameters $(\phi,\chi')$ such that 
$\phi = \rho_{\beta_1}\oplus \cdots \oplus \rho_{\beta_{12}}$,
 where $\beta_j$ are even integers for all $1\leq j\leq 12$ satisfying that
\[
\begin{aligned}
& \alpha_1<\beta_1<\alpha_2<\beta_2<\alpha_3<\beta_3<\alpha_4<\beta_4<\alpha_5<\alpha_6<\beta_5<\alpha_7<\beta_6<\alpha_8 \\
<\, &\beta_7<\alpha_9<\beta_8<\alpha_{10}<\alpha_{11}<\beta_9<\alpha_{12}<\beta_{10}<\alpha_{13}<\beta_{11}<\alpha_{14}<\alpha_{15}<\beta_{12}<\alpha_{16},
\end{aligned}
\]
and the character $\chi'$ of $\CS_\phi$ is given by
\[
-+-++-+--+--.
\]
It is easy to note that the cardinality $\#\CD_{p_1}(\varphi,\chi)$ is $\prod_{j\in\sgn(\chi)} \left(\frac{\alpha_{j+1}-\alpha_j}{2}\right)$.

Continuing the process, we obtain the following table of consecutive descents:
\[
\begingroup
\setlength{\arraycolsep}{6pt}
\renewcommand{\arraystretch}{1.2}
\begin{array}{@{}lcl@{}}
\chi & \epsilon_1 & p \\
\hline
-+-+--+-+--+-++--- & & \\
-+-++-+--+-- & - & 6 = 18-12 \\
-+--+--+ & - & 4 = 12-8 \\
+--++ & + & 3 = 8-5 \\
-- & + & 3 = 5-2 \\
& - & 2
\end{array}
\endgroup
\]
In this table, the third column gives the row length $p_i$ of the signed Young diagram $\CO(\varphi,\chi)$; the second column gives the sign $\epsilon_i$, which equals the rightmost sign of the character being descended at each step; and the first column records the successive descended characters.

An alternative presentation is the following triangular array of signs:
\[
\begingroup
\setlength{\arraycolsep}{2pt}
\renewcommand{\arraystretch}{1.15}
\scriptsize
\begin{array}{@{}cccc cccc cccc cccc cccc ccccc@{}}
-&&+&&-&&+&&--&&+&&-&&+&&--&&+&&-&&++&&---\\
&-&&+&&-&&+&&+&&-&&+&&-&&-&&+&&-&&-&\\
&&-&&+&&-&&&&-&&+&&-&&&&-&&+&&&&\\
&&&+&&-&&&&&&-&&+&&&&&&+&&&&&\\
&&&&-&&&&&&&&-&&&&&&&&&&&&\\
\end{array}
\endgroup
\]
From Table~\ref{tab:descent}, we read off the signed Young tableau $\CO(\varphi,\chi)$ as
\[
\begin{ytableau} 
+&- & + & - & + & - \\
+ & - & + & - \\
+ & - & + \\
+ & - & + \\
+ & - \\
\end{ytableau}
\]

\subsubsection{Special orthogonal groups} \label{ssec:AW-SO}
Since the descent process for special orthogonal groups involves an auxiliary sign, we include one example to clarify this feature.
Consider a discrete enhanced $L$-parameter $(\varphi,\chi)$ for $\SO(16,15)$ with character
\[
-++-+-++-+--+++.	
\]
Since the group is $\SO(16,15)$, we have $\disc(V)=1$, hence the auxiliary sign in \eqref{delta0} equals $\delta_0=-1$.
If instead $\delta_0=1$, then this corresponds to $\SO(15,16)$, which is isomorphic to $\SO(16,15)$ but yields a different signed Young tableau.

We compute $\sgn(\chi)=\{0,1,3,4,5,6,8,9,10,12\}$ with $p_1=11$ and $\epsilon_1=+$. Then the character $\chi'$ of elements in the first descent is 
\[
+--+-++-++.
\]
Proceeding as above, we obtain the table of consecutive descents
\[
\begingroup
\setlength{\arraycolsep}{6pt}
\renewcommand{\arraystretch}{1.2}
\begin{array}{@{}lccc@{}}
\chi & \epsilon_1 & \delta_0 & p \\
\hline
-++-+-++-+--+++ &  & - & \\
+--+-++-++ & + &  & 11 \\
--+--+ & + & - & 7 \\
++-- & - &  & 5 \\
+ & - & - & 5 \\
 & + &  & 3
\end{array}
\endgroup
\]
and the signed Young diagram $\CO(\varphi,\chi)$
\[
\begin{ytableau} 
+&- & + & - & + & - & + & - & + & - & + \\
+&- & + & - & + & - & + \\
- & + & - & + & - \\
- & + & - & + & - \\
+ & - & +\\
\end{ytableau}
\]

\quad

\section{Explicit Local Langlands Correspondence}  \label{sec:LLC}

\def\ii{\mathrm{i}}
\def\Inn{\mathrm{Inn}}

In this section, we give the correspondence between discrete enhanced $L$-parameters and discrete series 
representations following \cite{A} and review the algorithm for the associated varieties of these modules following \cite{Tr05}. 
From now on, we fix a choice $\ii=\sqrt{-1}$, the square root of $-1$.

\subsection{Pure inner form and Vogan packet}
Let $G$ be as in \eqref{cg}, with Lie algebra $\frak{g}$; write $\CG$ and $\frak g_\BC$ for their complexifications respectively.   Let $Z_\CG$ be the center of $\CG$.
Fix an antiholomorphic involution $\varsigma$ on $\CG$ 
such that $\CG^\varsigma$ is a compact real form of $\CG$.
If $\theta$ is an algebraic involution on $\CG$ commuting with $\varsigma$, then $\CG^{\varsigma\circ \theta}$ is a real form of $\CG$ with $\theta$ as its Cartan involution.

Fix a maximal $\varsigma$-stable torus $T$ of $\CG$.
The Weyl group of $\CG^\varsigma$ (and of $\CG$) with respect to the Cartan subgroup $T^\varsigma$ is $W := N_{\CG^\varsigma}(T^\varsigma)/T^{\varsigma}$.
Denote by $(X^*(T),R, X_*(T),R^\vee)$ the root datum of $\CG$.
Take $G^\vee$ to be the complex dual group of $G$, defined by the dual root datum $(X_*(T),R^\vee,X^*(T),R)$ when $G$ is linear, and the complex symplectic group when $G$ is the metaplectic group.
Let $T^\vee$ be the dual torus of $T$ in $G^\vee$ so that $X^*(T^\vee)=X_*(T)$ and $X_*(T^\vee)=X^*(T)$.

The exponential map
\begin{equation}\label{eq:exp}
\exp: X_*(T)\otimes_{\BZ} \BC \to T(\BC)
\end{equation}
sends $\chi \otimes_{\BZ} a \mapsto \chi(\exp(a))$, where $(\chi : \BC^\times \to T) \in X_*(T)$ and $a\in \BC$.

Fix a Borel subgroup $B^\vee$ of $G^\vee$ containing $T^\vee$, and let $\rho^\vee \in X^*(T^\vee)\otimes_\BZ \BR = X_*(T)\otimes_\BZ \BR$ be the half-sum of the positive roots in $B^\vee$, and set
\[
x_b := \exp(\pi \ii \rho^\vee)\in T(\BC)
\]
as the ``base point'' for what follows.
Since $z_b := x_b^2 \in Z_\CG$, the conjugation by $x_b$ defines an involution on $\CG$.
In fact, the conjugation by $x_b$ is a Cartan involution of the quasi-split real form $\CG^{\varsigma \circ \Inn(x_b)}$ of $\CG$.

Define
\begin{equation}\label{eq:CX}
\CX := \set{x\in T | x^2 = z_b},
\end{equation}
which carries a natural $W$-action by conjugation.
Because $z_b^2 = 1$, the set $\CX$ is contained in the maximal compact subgroup of $T$, namely $T^\varsigma$.

Suppose $x$ and $x'$ lie in the same $W$-conjugacy class.
Choose $\dot w \in N_{\CG^\varsigma}(T)$ with ${}^{\dot w} x = x'$.
Then conjugation by $\dot w$ induces an isomorphism of real reductive groups
\begin{equation}\label{eq:iota1}
\iota_{x,x'} := \Inn(\dot w) : \CG^{\varsigma \circ \Inn(x)} \longrightarrow \CG^{\varsigma \circ \Inn(x')}.
\end{equation}
If $\dot w'$ is another element in $N_{\CG^\varsigma}(T)$ satisfying ${}^{\dot w'} x = x'$, then
\begin{equation}\label{eq:conj-in-K}
\dot w ^{-1} \dot w' \in \CG^\varsigma \cap \CG^{\varsigma \circ \Inn(x')}.
\end{equation}
For linear groups we set
\[
G_{x} := \CG^{\varsigma \circ \Inn(x)} \text{ for } x \in \CX \quad \text{ and }  \quad G^* := G_{x_b}.
\]

Turning to the metaplectic case: let $G$ be the metaplectic double cover of $\CG^{\varsigma \circ \Inn(x_b)}$ (with projection map $\pr$).
Since $z_b = -1$, we have that $\CX$ is a single $W$-orbit.
For $x\in \CX$, fix a lift $\dot w_x \in N_{\CG^\varsigma}(T)$ such that ${}^{\dot w_x} x_b = x$, and define $G_{x}$ by the cartesian diagram
\begin{equation}\label{eq:iota2}
\begin{tikzcd}
G_{x} \ar[r,"\iota_{x,x_b}"]  \ar[d]& G_{x_b} \ar[r,"\cong"]\ar[d,"\pr"]& \Mp_{2n}(\BR) \\
\CG^{\varsigma \circ \Inn(x)} \ar[r, "\Inn(\dot w_x)"] & \CG^{\varsigma \circ \Inn(x_b)} \\
\end{tikzcd}
\end{equation}

In all cases, the \emph{pure inner forms} (also known as \emph{strong inner forms}) of $G$ are classified by the $W$-conjugacy classes of elements in $\CX$ (see \cite[Proposition~12.18]{AC} and \cite[Lemma~2.3]{A}).
Let
\begin{equation}\label{eq:strong-inner-forms}
\set{x_1,\cdots, x_k}
\end{equation}
be fixed representatives of the $W$-conjugacy classes of elements in $\CX$. 
Then \[\Set{G_{x_i} | i=1,2,\cdots, k}\] is the set of all pure inner forms of $G$ (up to isomorphism). 


The Vogan packets are defined as follows. 
For $x\in \CX$, let $\Pi_{\varphi}(G_x)$ be the $L$-packet consisting of the elements in $\Pi_\BR(G_x)$ with $L$-parameter $\varphi$; we will define this precisely for discrete series parameters in Section~\ref{sec:discrete}.

Suppose $x$ and $x'$ are in the same $W$-conjugacy class.
Let $\dot w \in N_{\CG^\varsigma}(T)$ be such that ${}^{\dot w} x = x'$ as before, and let $\iota_{x,x'}$ be defined by \eqref{eq:iota1} and its natural extension to the metaplectic case via \eqref{eq:iota2}.
We say
\[
(x,\pi) \sim (x',\pi') \text{ if and only if } \pi \cong \pi' \circ \iota_{x,x'}. 
\]
Thanks to \eqref{eq:conj-in-K}, the definition of $\sim$ is independent of the choice of $\dot w$.
As usual, we denote the equivalence class of $(x,\pi)$ under the relation $\sim$ by $[x,\pi]$.

The Vogan packet $\Pi_\varphi[G^*]$ is defined to be  
\begin{equation}\label{Vp}
    \Pi_\varphi[G^*] := \bigsqcup_{i=1}^k \Pi_{\varphi}(G_{x_i}) 
\end{equation}
for a fixed choice of the representatives \eqref{eq:strong-inner-forms}.
On the other hand, it is more convenient to work symmetrically, following the Atlas convention.
Under the equivalence relation $\sim$, we have the following natural identification
\[
\Pi_\varphi[G^*] = \left.\bigsqcup_{x\in \CX} \Pi_{\varphi}(G_x)\right\slash \sim. 
\]

\subsection{Discrete series representations} \label{sec:dsrepn} 
When $G$ is linear, consider the element 
\[z^\vee := \exp(2\pi \ii \rho)\]
in $Z^\vee$ (the center of $G^\vee$) where $\exp$ is defined in the same spirit as \eqref{eq:exp} and $\rho$ is the half-sum of the positive roots for a choice of positive root system of $\CG$.
The element $z^\vee$ is independent of any choice. It is given by the following formula in our case:
\begin{equation}\label{eq:z-vee}
z^\vee = \begin{cases}
	(-1)^{n-1} & \text{if $G^\vee$ is a general linear group},  \\
	-1 & \text{if $G^\vee$ is a symplectic group}, \\
	1 & \text{if $G^\vee$ is an orthogonal group}.
\end{cases} 
\end{equation}
For the metaplectic group, we define $z^\vee = -1$ by convention.  

Recall that an element $\lambda$  in $X^*(T)\otimes \BR$ is called regular if  
\[
\inn{\lambda}{\alpha^\vee}\neq 0 \text{ for all } \alpha^\vee \in R^\vee.
\]
Now define the positive system $\Delta_\lambda^+$ by 
\[
\Delta^+_\lambda := \Set{\alpha \in R | \inn{\lambda}{\alpha^\vee} > 0}.
\]
Let $\rho_\lambda$ be the half-sum of the positive roots for the positive system $\Delta^+_\lambda$ and 
$\fbb_\lambda$ be the Borel subalgebra of $\fgg_\BC$ corresponding to $\Delta^+_\lambda$.
For $x\in \CX$, let $K_x := G_{x}^{\Inn(x)}$ be the maximal compact subgroup of $G_{x}$ when $G$ is linear and the preimage of $G^{\Inn(x)}_x$ in $G_{x}$ when $G$ is metaplectic.
Let 
\[
\rho_{x,\lambda} := \frac{1}{2} \sum_{\alpha \in \Delta^+_\lambda, \alpha(x) = 1} \alpha
\]  
be the half-sum of the positive roots in the complexified Lie algebra of $K_x$.

\begin{thm}[{\cite[Theorem~5.3]{VZ}}]\label{def:discrete-series}
 Suppose $\lambda$ is a regular element in $X^*(T)\otimes_\BZ \BR$ such that $\exp(2\pi \ii \lambda)= z^\vee$.
    Then there is a unique irreducible admissible $(\fgg_\BC, K_x)$-module $\pi$ with the following conditions:
\begin{enumerate}
\item[(a)] $\pi$ has infinitesimal character $\lambda$. (Here the infinitesimal character is represented by the $W$-orbit of $\lambda$.)
\item[(b)] $\pi|_{K_x}$ contains the irreducible $K_x$-representation with highest weight
\[
\Lambda =\lambda+\rho_{\lambda}-2\rho_{x,\lambda};
\]
\item[(c)] if $\Lambda'$ is the highest weight of a $K_x$-type in $\pi|_{K_x}$, then $\Lambda'$ is of the form
\[
\Lambda'=\Lambda+\sum_{\substack{\beta \in \Delta^+_\lambda \\ \beta(x) = -1}} n_\beta \beta \quad \text{for integers } n_\beta\ge 0.
\]
\end{enumerate}
This unique $(\fgg_\BC, K_x)$-module  $\pi$ is denoted by $A_{\fbb_\lambda, x}(\lambda)$. 
\end{thm}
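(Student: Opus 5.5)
This statement is the Vogan–Zuckerman characterization of discrete series (more precisely, of the modules $A_{\fbb_\lambda}(\lambda)$ attached to a $\theta$-stable Borel subalgebra), cited from \cite[Theorem~5.3]{VZ}. The plan is to construct the module by cohomological induction and then identify it through its lowest $K_x$-type.

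\emph{Setup.} Since $x$ lies in $T$ and $\lambda$ is regular, $\fbb_\lambda$ is a $\theta_x$-stable Borel subalgebra with Levi factor $\Fh=\mathrm{Lie}(T)$. The torus $T^\varsigma$ is compact and lies in $K_x$, so $G_x$ has a compact Cartan subgroup.

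\emph{Existence.} Define the cohomologically induced module $\mathcal R^{S}_{\fbb_\lambda}(\BC_{\lambda-\rho_\lambda})$ in the middle degree $S=\dim(\Fu_\lambda\cap\fpp)$, where $\Fu_\lambda$ is the nilradical of $\fbb_\lambda$. Here the condition $\exp(2\pi\ii\lambda)=z^\vee$ is exactly the integrality condition ensuring that $\lambda-\rho_\lambda$ exponentiates to a character of $T^\varsigma$; in the metaplectic case one uses the genuine version of this condition. The standard facts then apply in the good range, which holds because $\lambda$ is dominant regular for $\Delta^+_\lambda$:
\begin{enumerate}
\item[(i)] by the Zuckerman and Harish-Chandra computation, the module has infinitesimal character $\lambda$;
\item[(ii)] it is nonzero and irreducible;
\item[(iii)] its $K_x$-types can be computed via the bottom-layer map and the Blattner-type formula.
\end{enumerate}
From (iii), the bottom layer has highest weight $(\lambda-\rho_\lambda)+2\rho(\Fu_\lambda\cap\fpp)$. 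Since $\rho_\lambda=\rho(\Fu\cap\Fk)+\rho(\Fu\cap\fpp)$, this equals $\lambda+\rho_\lambda-2\rho_{x,\lambda}=\Lambda$, which gives (b). Every other $K_x$-type has highest weight of the form $\Lambda$ plus a sum of weights of $\Fu_\lambda\cap\fpp$, i.e. of noncompact positive roots, which gives (c).

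\emph{Uniqueness.} Suppose $\pi'$ satisfies (a)–(c). By (c), $\Lambda$ is the minimal $K_x$-type of $\pi'$. Vogan's theory of minimal $K$-types attaches to $\Lambda$ a parameter $\lambda_a(\Lambda)$ and a $\theta$-stable parabolic $\Fq$. Here one computes $\lambda_a=\Lambda+2\rho_{x,\lambda}-\rho_\lambda=\lambda$, which is regular, so $\Fq=\fbb_\lambda$. Vogan's classification then says the irreducible modules with lowest $K$-type $\Lambda$ are parametrized by the continuous parameter on the (compact) Levi, and here that parameter is fixed by the infinitesimal character (a). Hence $\pi'$ is unique and isomorphic to the cohomologically induced module.

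\emph{Main obstacle.} The difficult step is uniqueness: proving that $\lambda_a(\Lambda)=\lambda$ is regular and $\Lambda$-dominant in Vogan's sense. This requires checking the $\rho$-shifts carefully, together with the separate metaplectic bookkeeping, where the $K_x$-types are genuine and $\rho_\lambda$ is half-integral. I would treat this by appealing directly to \cite{VZ} and \cite{A} rather than reproving it.
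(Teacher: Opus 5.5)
Your outline is the standard Vogan--Zuckerman argument. The paper gives no proof of its own and simply quotes \cite[Theorem~5.3]{VZ}, so in substance this is the same approach.

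One correction: the nonvanishing degree is $S=\dim(\mathfrak{u}_\lambda\cap\mathfrak{k})$, not $\dim(\mathfrak{u}_\lambda\cap\mathfrak{p})$. In the good range $\mathcal{R}^j$ vanishes for $j\neq S$, so the degree you chose generally yields $0$. For example, for a holomorphic discrete series of $\SU(1,1)$ one has $\mathfrak{u}_\lambda\subset\mathfrak{p}$ and $K=T$, so only $\mathcal{R}^0$ survives.

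Also, the claim that (c) makes $\Lambda$ the minimal $K_x$-type needs one observation: $\Lambda+2\rho_{x,\lambda}=\lambda+\rho_\lambda$ is strictly $\Delta^+_\lambda$-dominant, so adding noncompact positive roots strictly increases $\|\mu+2\rho_c\|$. The same identity gives $\lambda_a(\Lambda)=\lambda$ immediately, so the step you single out as the main obstacle is actually routine.
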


\begin{defn}
	Let $\pi_x(\lambda)$ be the Casselman-Wallach representation of $G_{x}$ such that its Harish-Chandra module is given by $A_{\fbb_\lambda, x}(\lambda)$. 
\end{defn}

Under this notation, we have the following fact, see \cite[Section~5]{A} for example
\[
(x,\pi_x(\lambda)) \sim ({}^w x, \pi_{{}^w x}(w\cdot \lambda)) \text{ for all } w \in W.
\]

\subsection{Discrete Vogan packet}
\label{sec:discrete}
Let $B$ be the Borel subgroup of $\CG$ corresponding to $B^\vee$.
Let $\Delta^+$ be the set of roots in ${\rm Lie}\,B$; then the corresponding positive coroots $(\Delta^\vee)^+$ form the set of roots in ${\rm Lie}\,B^\vee $.

A discrete $L$-parameter $\phi: W_\BR \to G^\vee \rtimes \Gal(\BC/\BR)$, up to conjugation, may be assumed to be of the form:
\begin{equation} \label{HC}
\phi(z) = (z/\bar z)^\lambda, \quad z\in \BC^\times \subset W_\BR
\end{equation}
where $\lambda\in X^*(T)\otimes \BR$ satisfies 
\begin{itemize}
\item $\lambda$ is regular;
\item $\lambda$ is $B$-dominant;
\item $\exp(2\pi \ii \lambda)= z^\vee$, see \eqref{eq:z-vee} for the definition of $z^\vee$. 
\end{itemize}
In this case $\phi(j) = \exp(-\pi \ii \lambda)\rtimes c$, where as before $c\in \Gal(\BC/\BR)$ is the non-trivial element.
Moreover, the centralizer in $G^\vee$ of the image of $\phi$ is
\[
\Set{h\in T^\vee | h^2=1}.
\]
This set is disconnected and can be identified with the component group $\CS_\phi$ of $\phi$.  
Let $s$ be the map 
\[
 s : X^*(T)/ 2X^*(T) \rightarrow \Set{h\in T^\vee | h^2=1} = \CS_\phi 
\]
defined by $\gamma + 2X^*(T) \mapsto \exp(\pi \ii \gamma)$ for $\gamma \in X^*(T) = X_*(T^\vee)$.

For each $x\in \CX$, we define $\tau_x \in \widehat{\CS_\phi}$ by
\[
\tau_x(s(\gamma + 2X^*(T))) := \gamma(xx_b^{-1}), \quad \gamma \in X^*(T).
\]
Note that $(x x_b^{-1})^2 = 1$ for all $x\in \CX$.
Hence $\gamma(x x_b^{-1}) = 1$ for all $\gamma \in 2X^*(T)$, so $\tau_x$ is well-defined.
By the duality for algebraic tori, $\tau$ gives the following bijection 
\[
\tau : \CX \xrightarrow{\  \sim\  }\widehat{\CS_\phi},\qquad  x\longmapsto \tau_x.
\]

\begin{rmk}
	In Section~\ref{sec:ELP}, we use the local Weil group $W_\BR$ to define the $L$-group $^LG$ and the local $L$-parameter $\varphi$, while we use the local Galois group $\Gal(\BC/\BR)$ to define the local $L$-parameter $\phi$ here. They are 
    different in general. 
	We refer to \cite[Section~8]{GGP1} for the relation between $\phi$ and $\varphi$.
	In summary, the $G^\vee$-conjugacy class of $\phi$ is
	in bijection with the $G^\vee$-conjugacy class of $\varphi$, and there is a natural identification 
	of $\CS_\phi$ with $\CS_\varphi$; see \cite[Theorem~8~(iii)]{GGP1}. We also write $\Pi_\varphi[G^*]$ as $\Pi_\phi[G^*]$ under this identification. 
\end{rmk}

Recall that an irreducible Casselman-Wallach representation $\pi$ of $G^*$ is called large if the annihilator variety of $\pi$ is the whole nilpotent cone of $\frak g_\BC$ (\cite[Definition~6.1]{V78}).
In particular, such large representations are generic with respect to the Whittaker datum determined by their wavefront sets (\cite[Theorem~A]{Mat92}).

The following proposition gives a parameterization of the Vogan packet $\Pi_\phi[G^*]$. See \cite[Lemma 6.15]{A},
\cite[Appendix]{AA} and \cite[Section 4]{AK}. 

\begin{prop} \label{LLC}
	The following map   
\[
\widehat{\CS_\phi}\xrightarrow{\ \sim \ } \Pi_\phi[G^*],\qquad  \tau_x\longmapsto \pi_x(\lambda)
\]
is bijective. It takes the trivial character $\tau_{x_b}$ to a large discrete series $\pi_{x_b}(\lambda)$ of $G^*$. 
\end{prop}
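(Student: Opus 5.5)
The statement is a packaging of known results (Adams, Adams--Arthur, Adams--Kaletha), so the plan is to assemble them rather than build anything new. The proof has three ingredients: well-definedness of the map, bijectivity, and the identification of the image of $\tau_{x_b}$.

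\textbf{Well-definedness.} First I would check that the assignment $\tau_x\mapsto\pi_x(\lambda)$ is independent of the choice of $x$ within its $W$-orbit, modulo $\sim$. For $w\in W$, the relation $(x,\pi_x(\lambda))\sim({}^wx,\pi_{{}^wx}(w\cdot\lambda))$ recorded above shows that replacing $x$ by ${}^wx$ forces $\lambda$ to move to $w\lambda$. The difficulty is that $\lambda$ is fixed $B$-dominant, so $w\lambda$ is generally not dominant. The way around this is that $\tau$ is defined on $\CX$ itself, not on orbits. So for each $x\in\CX$ the representation $\pi_x(\lambda)$, with the fixed dominant $\lambda$, is a discrete series of $G_x$ with $L$-parameter $\phi$. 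This holds because its infinitesimal character is $\lambda$ and Harish-Chandra's parametrization identifies the $L$-packet of $G_x$ with $W_{K_x}\backslash W$-orbits of chambers.

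\textbf{Bijectivity.} Next I would count. By Harish-Chandra, the discrete series of $G_x$ with infinitesimal character $\lambda$ are the $\pi_x(w\lambda)$ for $w\in W_{K_x}\backslash W$. Under $\sim$, these correspond to the pairs $({}^{w^{-1}}x,\pi(\lambda))$. Hence
\[
\bigsqcup_{x\in\CX}\Pi_\phi(G_x)/\!\sim\;=\;\{\pi_x(\lambda)\;:\;x\in\CX\},
\]
and distinct $x\in\CX$ give inequivalent pairs. The last point is because $\pi_x(\lambda)\sim\pi_{x'}(\lambda)$ forces ${}^wx=x'$ with $w\lambda=\lambda$, hence $w=1$ by regularity. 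So $x\mapsto[x,\pi_x(\lambda)]$ is a bijection from $\CX$ onto $\Pi_\phi[G^*]$. Composing with the torus-duality bijection $\tau:\CX\to\widehat{\CS_\phi}$ gives the claimed bijection. This is \cite[Lemma 6.15]{A}. The metaplectic case runs the same way, using the diagram \eqref{eq:iota2} and the fact that $\CX$ is a single $W$-orbit.

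\textbf{Largeness of $\pi_{x_b}(\lambda)$.} Finally I would show that $\tau_{x_b}$, the trivial character, goes to a large representation of $G^*$. Since $x_b=\exp(\pi\ii\rho^\vee)$, a root $\alpha$ satisfies $\alpha(x_b)=(-1)^{\langle\alpha,\rho^\vee\rangle}$. Thus every simple root of $\Delta^+=\Delta^+_\lambda$ is noncompact for $x_b$. In other words, the chamber of $\lambda$ has all simple roots noncompact, which is exactly Vogan's criterion for a discrete series to be large, i.e.\ generic \cite{V78}.

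The main obstacle is matching conventions. The trivial character must go to the generic member for the \emph{chosen} Whittaker datum, and the normalization of $\tau_x$ (using $xx_b^{-1}$) must agree with the Arthur/Mok/KMSW normalization used in Section~\ref{sec:ELP}. I would settle this by citing \cite[Appendix]{AA} and \cite[Section 4]{AK}, which compare the Adams--Vogan parametrization with the endoscopic one. If a discrepancy remains, it should be a twist by a character of $\CS_\phi$, fixed by the choice of Whittaker datum.
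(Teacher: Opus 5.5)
Your proposal is correct and takes the same approach as the paper. The paper gives no independent argument here; it simply cites \cite[Lemma 6.15]{A}, \cite[Appendix]{AA} and \cite[Section 4]{AK}. Your three steps fill in what those references contain: the Harish-Chandra count, the injectivity argument, and the observation that every simple root of $\Delta^+_\lambda$ is noncompact at $x_b=\exp(\pi\ii\rho^\vee)$, followed by Vogan's largeness criterion. One compression in the injectivity step should be made explicit: before concluding $w\lambda=\lambda$, you must adjust $w$ by an element of the real Weyl group $W(G_x,T)$, which fixes $x$.
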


\subsection{Associated variety} \label{ssec:AV}

Let $\theta$ be the Cartan involution of $G$, which induces the Cartan decomposition $\frak{g}_\BC= \frak{k}\oplus \frak{p}$. Let $K = G^\theta$ and $\CK = \CG^\theta$ be the corresponding subgroups of $G$ and $\CG$ respectively. For a finite length $(\frak{g}_\BC, K)$-module $X$, its associated variety ${\rm AV}(X)$ is a finite union of closures of nilpotent $K$-orbits on $\frak{p}$, which is an important invariant of 
$X$ (see e.g. \cite{AV21}). 
More precisely, given a $K$-stable good filtration on $X$, the associated graded  ${\rm gr}(X)$ is an $S(\frak g_\BC)$-module, where $S(\frak g_\BC)$ is the symmetric algebra of $\Fg_\BC$. By definition, the associated variety ${\rm AV}(X)$ of $X$ is the support of ${\rm gr}(X)$, viewed as a subvariety of $(\frak g_\BC/\frak k)^*\cong\frak p$. 
For an irreducible Casselman-Wallach representation $\pi$ of $G$, define the associated variety of $\pi$ to be:  $\AV(\pi):=\AV(\pi_{\rm HC})$, where $\pi_{\rm HC}$ is the Harish-Chandra module of $\pi$. 
 
Now we proceed to compute the associated variety of a discrete series representation.
Since the discrete series representation $\pi_x(\lambda)$ is realized as a cohomologically induced representation, we can use the following theorem to compute its associated variety. This theorem is well known to experts (see, for example, the introduction of \cite{Tr05}) but is difficult to find in a proper reference. The following very general version is from \cite[Proposition~8.2]{O15}.

\begin{thm}
	Let $\fqq$ be a $\theta$-stable parabolic subalgebra of $\fgg_\BC$ with Levi decomposition $\fqq = \fll \oplus \fuu$. Let $\overline{\fqq}$ denote the parabolic subalgebra opposite to $\fqq$ and $\overline{\fuu}$ be the nilpotent radical of $\overline{\fqq}$.
	For any one-dimensional character $\lambda$ of $\fll$, suppose that $X$ is an irreducible submodule of $A_\fqq(\lambda)$. Then the associated variety of $X$ is given by 
	\[
	\AV(X) = \CK (\fuu^*\cap \fpp^*) = \CK(\overline{\fuu}\cap \fpp).
	\] 
\end{thm}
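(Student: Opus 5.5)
The plan is to prove the two inclusions $\AV(X)\subset \CK(\overline{\fuu}\cap\fpp)$ and $\AV(X)\supset \CK(\overline{\fuu}\cap\fpp)$ separately. The identification $\CK(\fuu^*\cap\fpp^*)=\CK(\overline{\fuu}\cap\fpp)$ comes from the trace-form isomorphism $\fgg_\BC^*\cong\fgg_\BC$, under which the annihilator of $\fqq$ corresponds to $\overline{\fuu}$. Note that $\CK(\overline{\fuu}\cap\fpp)$ is the image of the irreducible variety $\CK\times_{\CK\cap Q}(\overline{\fuu}\cap\fpp)$ under a proper map. Hence it is closed and irreducible, and it is the closure of a single nilpotent $\CK$-orbit, namely the Richardson-type orbit meeting $\overline{\fuu}\cap\fpp$ densely. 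So for the lower bound it suffices to show $\dim\AV(X)\ge\dim\CK(\overline{\fuu}\cap\fpp)$.

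For the upper bound I would write $A_\fqq(\lambda)=\mathcal R^S(\lambda)$ as a Zuckerman derived functor module. Here $S=\dim(\fuu\cap\fkk)$, and $\mathcal R^S$ is applied to the $(\fgg_\BC,\CK\cap Q)$-module $\mathrm{pro}_{\overline{\fqq}}^{\fgg_\BC}(\lambda\otimes\wedge^{\mathrm{top}}\fuu)$, a generalized Verma module. This module carries the natural PBW filtration. Its associated graded is a quotient of $S(\fgg_\BC/\overline{\fqq})$ tensored with a finite-dimensional space, so after restricting to $(\fgg_\BC/\fkk)^*\cong\fpp$ its support is contained in $\overline{\fuu}\cap\fpp$. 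The derived Zuckerman functor $\Gamma^S$ from $\CK\cap Q$ to $\CK$ is computed by a Koszul/Hochschild-type complex with an induced filtration. Its associated graded is $\CK$-equivariant sheaf cohomology on $\CK/(\CK\cap Q)$ of the bundle with fiber supported on $\overline{\fuu}\cap\fpp$. Therefore any subquotient of $A_\fqq(\lambda)$, in particular $X$, has associated variety inside $\CK\cdot(\overline{\fuu}\cap\fpp)$. The subquotient claim uses the exactness of $\mathrm{gr}$ on supports, i.e.\ that $\AV$ of a submodule lies in $\AV$ of the ambient module.

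For the lower bound I would pass to the geometric (Beilinson--Bernstein / Hecht--Mili\v{c}i\'c--Schmid--Wolf) realization. Up to twisting, $A_\fqq(\lambda)$ is the global sections of the standard $\CD_\lambda$-module attached to a closed $\CK$-orbit $Z$ in the flag variety $\mathcal B$. The characteristic cycle of an irreducible $\CD$-module supported on $\overline Z$ contains the conormal bundle $T^*_Z\mathcal B$ with multiplicity one. The moment map image of $T^*_Z\mathcal B$, intersected with $\fpp$, is exactly $\CK(\overline{\fuu}\cap\fpp)$; here I use that $Z$ fibers over $\CK/\CK\cap Q$. Because $Z$ is closed, the standard module is irreducible, so it equals its own irreducible subquotient. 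Any nonzero submodule $X$ of global sections is then obtained as the global sections of that irreducible sheaf, or of a sheaf sharing its top-dimensional support. Borho--Brylinski/Ginzburg's theorem on the image of the characteristic cycle under the moment map then gives $\AV(X)=\mu(\mathrm{CC})\supset\CK(\overline{\fuu}\cap\fpp)$.

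The main obstacle is the lower bound when $\lambda$ is outside the good range. There $A_\fqq(\lambda)$ may be reducible, and global sections may kill part of the characteristic cycle. Since $X$ is only assumed to be an irreducible submodule, I must rule out that it drops to a smaller orbit. My first attempt would be translation to the good range: $X$ is the image of an irreducible good-range $A_\fqq(\lambda')$ under a translation functor toward a possibly singular $\lambda$. Translation does not increase the associated variety. Also, the bottom-layer $\CK$-type of $A_\fqq(\lambda)$ lies in every nonzero submodule, so the image of $X$ under the inverse translation is nonzero. The dimension count then follows from Gelfand--Kirillov dimension, computed by the Blattner-type growth $S(\fuu\cap\fpp)$ of $\CK$-types. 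I expect the delicate step to be verifying that the bottom-layer $\CK$-type lies in the socle.
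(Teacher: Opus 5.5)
\paragraph{The gap: the lower bound for an arbitrary irreducible submodule $X$.}
Your step ``$X$ is obtained as the global sections of that irreducible sheaf, or of a sheaf sharing its top-dimensional support'' is not justified. Away from the dominant regular case, $\Gamma(\mathcal M)$ can be reducible even though $\mathcal M$ is irreducible. A proper submodule is then not $\Gamma$ of any subsheaf of $\mathcal M$. Moreover, the Borho--Brylinski equality $\AV=\mu(\mathrm{Ch})$ is only available for $\Gamma(\mathcal M)$ with dominant regular parameter.

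The translation fallback does not close this gap, for two reasons.
\begin{itemize}
\item Translation functors carry $A_{\fqq}(\lambda')$ to $A_{\fqq}(\lambda)$ only while the parameters stay in one closed chamber (the weakly good range). Outside it, you do not know that $X$ is a translate of a good-range module.
\item Knowing that $X$ contains the bottom-layer $\CK$-type gives no lower bound on its Gelfand--Kirillov dimension. The Blattner growth describes $A_{\fqq}(\lambda)$, not its submodules.
\end{itemize}

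In fact, with the definition you adopt ($\Gamma^S$ of a produced module), the assertion is false in this generality. Take $\mathrm{SU}(1,1)$, $\fqq$ the holomorphic Borel (so $S=0$), $\alpha$ the root in $\fuu$, and $\lambda=-\alpha$. Then $\mathcal R^0(\lambda)$ is a dual Verma module with $\CK$-types $0,\alpha,2\alpha,\dots$. Its socle is the trivial representation, which contains the bottom-layer $\CK$-type and has $\AV=0$.

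So for arbitrary $\lambda$ the statement must be read in one of two ways. Either $A_{\fqq}(\lambda)$ is realized as global sections of the standard $\CD$-module on a closed orbit (for $\mathrm{SU}(1,1)$ this gives the Verma module, not its dual), or $\lambda$ is restricted. The paper itself only quotes this result. It uses it only for discrete series in the good range, where $A_{\fqq}(\lambda)$ is irreducible.

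\paragraph{The missing idea: localization.}
This argument needs neither dominance nor translation.

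\emph{Choice of variety.} Work on the partial flag variety $\mathcal X=\mathcal G/\overline{Q}$. A closed $\CK$-orbit in $\mathcal B$ comes from a $\theta$-stable Borel $\mathfrak b\subset\overline{\fqq}$. Its conormal bundle has moment image $\CK(\mathfrak n_{\mathfrak b}\cap\fpp)$, which is too big in general: for $\fqq=\fgg_\BC$ one has $\overline{\fuu}=0$. You need the closed orbit $Z=\CK\cdot\overline{\fqq}$ in $\mathcal X$, or its full preimage in $\mathcal B$.

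\emph{The argument.} Let $\mathcal M$ be the direct image of the $\lambda$-twisted line bundle on $Z$. It is irreducible by Kashiwara's theorem, and in the good range $\Gamma(\mathcal M)\cong A_{\fqq}(\lambda)$. Let $0\neq X\subset\Gamma(\mathcal M)$.
\begin{itemize}
\item The natural map $\CD_\lambda\otimes_{U(\fgg_\BC)}X\to\mathcal M$ is nonzero, hence surjective.
\item A good filtration on $X$ gives $\mathrm{Ch}(\CD_\lambda\otimes X)\subset\mu^{-1}(\AV(X))$.
\item Hence $\mu(T^*_Z\mathcal X)=\CK(\overline{\fqq}^{\perp}\cap\mathfrak k^{\perp})=\CK(\overline{\fuu}\cap\fpp)\subset\AV(X)$.
\item The reverse inclusion follows from $\AV(X)\subset\AV(\Gamma(\mathcal M))\subset\mu(\mathrm{Ch}\,\mathcal M)$.
\end{itemize}

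\paragraph{Smaller slips.}
Your upper bound is sound in outline, but three slips need fixing.
\begin{itemize}
\item $\overline{\fuu}\cap\fpp$ is stable under $\CK\cap\overline{Q}$, not under $\CK\cap Q$. So your bundles must live over $\CK/(\CK\cap\overline{Q})$.
\item Under the trace form, it is the annihilator of $\overline{\fqq}$, not of $\fqq$, that corresponds to $\overline{\fuu}$.
\item $\mathrm{pro}_{\overline{\fqq}}$ is not a generalized Verma module. Its $\CK$-types grow along $\overline{\fuu}$ and its support lies in $\fuu$. That would give $\CK(\fuu\cap\fpp)$, exactly the error the paper's remark warns about. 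Use $\mathrm{ind}_{\overline{\fqq}}$ with the Bernstein functor, or $\mathrm{pro}_{\fqq}$ with $\Gamma^S$.
\end{itemize}
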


\begin{rmk}
	It is important to point out that there is a subtle typo in \cite{Tr05}, which claims that the associated variety should be $\CK(\fuu\cap \fpp)$ instead of $\CK(\overline{\fuu}\cap \fpp)$. 
	Meanwhile, Trapa's combinatorial formula in fact computes $\CK(\overline{\fuu}\cap \fpp)$. 
    
	This can be seen from the example of $\mathrm{SU}(1,1)$. 
	Let $\fgg_\BC = \mathfrak{sl}(2,\BC)$ be the complexified Lie algebra and $x = \mathrm{diag}({\rm i},-{\rm i})$ be the diagonal matrix. Then $x^2 = -1 = (-1)^{2-1}$. 
	Let $\fqq$ be the upper triangular parabolic subalgebra of $\fgg_\BC$ with Levi decomposition $\fqq = \fll \oplus \fuu$, where $\fll$ is the subalgebra of diagonal matrices, so that $\overline{\fqq} = \fll \oplus \overline{\fuu}$ is the lower triangular parabolic subalgebra.
	Then $\fll$ is the Lie algebra of the maximal compact torus $T = \RU(1)$ of $\SU(1,1)$.
	The representations of $T$ are parameterized by $\BZ$.
	Let $\BC_\lambda$ denote the one-dimensional $U(\overline{\fqq})$-module on which $\fll$ acts by the character $\lambda$ and $\overline{\fuu}$ acts trivially.
	The Harish-Chandra module of a holomorphic discrete series representation is given by $\pi = U(\fgg_\BC)\otimes_{U(\overline{\fqq})} \BC_\lambda$.
	Note that $\pi$ has $T$-type given by $\lambda, \lambda+2, \lambda+4, \cdots$ (compare the definition of $A_\fqq(\lambda)$ in \cite{VZ}). 
	Let $\mathrm{gr}(\pi)$ denote the associated graded of $\pi$ with respect to the canonical filtration on $U(\fgg_\BC)$ induced by the tensor product. Then $\mathrm{gr}(\pi)$ has a natural grading, and as an $S(\fgg_\BC)$-module $\mathrm{gr}(\pi) \cong S(\fgg_\BC/\overline{\fqq}) \cong S(\fuu) = P(\fuu^*)$.  
	Here $S(\cdot)$ denotes the symmetric algebra and $P(\cdot)$ denotes the polynomial algebra on a vector space.
	Therefore, we see that $\AV(\pi) = \fuu^* \cong \overline{\fuu}$ under the identification of $\fgg_\BC^*$ with $\fgg_\BC$. 
\end{rmk}


\def\diag{\mathrm{diag}}
\def\GL{\mathrm{GL}}
\def\Mult{\mathrm{Mult}}
\def\Sign{\mathrm{Sign}}
\def\WF{\mathrm{WF}}

For classical groups, $\CG$ is viewed as a subgroup of $\GL(V)$, where $V$ is a complex vector space.
We always choose a basis of $V$ such that $T$ is a diagonal matrix and $B$ consists of upper triangular matrices.

In the following discussion, we let $\Sign(n)$ be the set of all sequences of signs of length $n$. An element $\delta \in \Sign(n)$ is written as
\[
\delta = (\delta_1, \cdots, \delta_n)
\] 
with $\delta_i \in \{\pm 1\}$.
We write $p_\delta = \#\set{i\in [n] | \delta_i = 1}, $ and $q_\delta = \#\set{i\in [n] | \delta_i = -1}$.

The group $\CS_\phi$ is identified with $\BZ_2^n$ with generators $e_i$.
Then $\widehat{\CS_\phi}$ is identified with $\Sign(n)$ by setting 
\[
\chi(e_i) := \chi_i \quad \text{for } i=1, \cdots, n
\] 
where $\chi = (\chi_1, \cdots, \chi_n) \in \Sign(n)$.
The bijection $\tau : \CX \xrightarrow{ \  \sim \  }\widehat{\CS_\phi}$ is now described by a map $\Sign(n)\to \Sign(n)$ sending $\delta $ to $\chi_\delta$ so that $\tau_{x_\delta} = \chi_\delta$. 

We make the following choices to explicate the various maps defined above and compute the wavefront set of $\pi_{x_b}(\lambda)$ using Theorem \ref{thm:TrapaYoung} below.
\begin{enumerate}
\item When $\CG = \GL_n(\BC)$, $\CX = \set{x \in T | x^2 = (-1)^{n-1}}$ and it is in bijection with $\Sign(n)$ via the map  
\[
\delta \mapsto x_\delta = (-{\rm i})^{n-1}\diag(\delta_1, \cdots, \delta_n).\]
Let $(p_x, q_x) = (p_\delta, q_\delta)$.
The real form $G_{x}$ is identified with $\RU(p_x, q_x)$.
In this case, 
\[
x_b = {\rm i}^{n-1}\diag(1,-1,\cdots, (-1)^{n-1}) 
\]
 and $G^* = G_{x_b} =\RU(\lceil n/2\rceil, \lfloor n/2\rfloor)$. 
The bijection $\tau: \CX \xrightarrow{ \  \sim \  }\widehat{\CS_\phi}$ is given by 
 \[
 (\chi_\delta)_i = (-1)^{n-i}\delta_i \text{ for } i=1, \cdots, n.
 \]
In this case, we obtain 
 \[
 \WF(\pi_{x_b}(\lambda)) = 
    \begin{ytableau}
    \pm & \mp & \pm & \cdots &+  
    \end{ytableau}.
 \]

\item When $\CG = \SO_{2n}(\BC)$, 
$\CX = \set{x \in T | x^2 = 1}$ and it is in bijection with $\Sign(n)$ via the map  
\[
\delta \mapsto x_\delta = \diag(\delta_1, \cdots, \delta_n, \delta_n, \cdots, \delta_1).
\]
Let $(p_x, q_x) = (2 p_\delta, 2 q_\delta)$.
The real form $G_{x}$ is identified with $\SO(p_x, q_x)$.
In this case, 
\[
x_b = \diag((-1)^{n-1},\cdots,-1, 1, 1, -1, \cdots,(-1)^{n-1})
\]
and $G^* = G_{x_b} = \SO(n,n)$.
The bijection $\tau: \CX \xrightarrow{ \  \sim \  }\widehat{\CS_\phi}$ is given by 
 \[
 (\chi_\delta)_i = (-1)^{n-i}\delta_i \text{ for } i=1, \cdots, n.
 \]
 In this case, we obtain
 \[
\WF(\pi_{x_b}(\lambda)) =
\begin{ytableau}
- &+ & \cdots &+&- \\ 
+
\end{ytableau}
.
 \]

\item When $\CG = \SO_{2n+1}(\BC)$, $\CX = \set{x \in T | x^2 = 1}$. 
Then $\CX$ is in bijection with $\Sign(n)$ via the map  
\[
\delta \mapsto x_\delta = \diag( \delta_1,  \cdots, \delta_n, 1,  \delta_n, \cdots, \delta_1).
\]
Recall that $\delta_0 = (-1)^n {\rm disc}(V)$ determines the pure inner class of $V$. Let
\[
(p_x, q_x) = 
\begin{cases}
(2 p_\delta +1 , 2 q_\delta ) & \text{if } \delta_0 = 1. \\	
(2 p_\delta , 2 q_\delta +1 ) & \text{if } \delta_0 = -1.
\end{cases}
\] 
The real form $G_{x}$ is identified with $\SO(p_x, q_x)$. 
In this case, 
\[
x_b  = \diag((-1)^n, \cdots, -1,  1,  -1, \cdots,(-1)^n).
\]
When $\delta_0=1$, \[
G^* = \begin{cases}
\SO(n+1,n) & \text{if $n$ is even;} \\
\SO(n,n+1) & \text{if $n$ is odd.} \\
\end{cases}
\]
When $\delta_0=-1$, 
\[
G^* = \begin{cases}
\SO(n,n+1) & \text{if $n$ is even;} \\
\SO(n+1,n) & \text{if $n$ is odd.} \\
\end{cases}
\]
The bijection $\tau: \CX \xrightarrow{ \  \sim \  }\widehat{\CS_\phi}$ is given by 
 \[
 (\chi_\delta)_i = (-1)^{n-i} {\rm disc}(V)  \delta_i = (-1)^i \delta_0\delta_i \text{ for } i=1, \cdots, n.
 \]
In this case, we obtain 
 \[
 \WF(\pi_{x_b}(\lambda)) =
 \begin{cases}
 \begin{ytableau}
 \pm &\mp & \cdots & + &\cdots &\mp & \pm \\ 
 \end{ytableau}
 & \text{if $\delta_0 = 1$, and $n$ is even (resp. odd);} \\
 \begin{ytableau}
 \mp &\pm & \cdots & - &\cdots &\pm & \mp \\ 
 \end{ytableau}
 & \text{if $\delta_0 = -1$, and $n$ is even (resp. odd).} \\
 \end{cases}
 \]

\item When $\CG = \Sp_{2n}(\BC)$, 
\[
\CX = \set{x \in T | x^2 = -1}.
\]
It is in bijection with $\Sign(n)$ via the map  
\[
\delta \mapsto x_\delta = {\rm i}^{-1} \diag(\delta_1, \cdots, \delta_n, \delta_n, \cdots, \delta_1).
\]
Let
\[
(p_x, q_x) = (2 p_\delta, 2 q_\delta).
\]
Then
$p_x = q_x =n$ and the real form $G_{x}$ is identified with $\Sp_{2n}(\BR)$.  
In this case, 
\[
x_b = {\rm i}^{2n-1}\diag(1,-1,\cdots, (-1)^{n-1}, (-1)^{n-1}, \cdots, -1, 1)
\]
and $G^* = \Sp_{2n}(\BR)$.  
The bijection $\tau: \CX \xrightarrow{ \  \sim \  }\widehat{\CS_\phi}$ is given by 
\[
(\chi_\delta)_i = (-1)^{n - i + 1} \delta_i \text{ for } i=1, \cdots, n.
\]	
In this case, 
 \[
 \WF(\pi_{x_b}(\lambda)) =
 \begin{ytableau}
  + & - & \cdots & + & \cdots & + & - \\ 
 \end{ytableau}.
 \]
\end{enumerate}

\medskip

Following \cite{Tr05}, the Levi part of ${\frak b}_{x_\delta}$ is given by  
\[
{\frak u}(p_1, q_1)\times  {\frak u}(p_2,q_2)\times \cdots \times {\frak u}(p_n, q_n),
\]
where 
\[
(p_i, q_i)= \begin{cases}
(1,0) & \text{if } \delta_i = 1, \\
(0,1) & \text{if } \delta_i = -1.
\end{cases}
\]
Write $\CO(\delta)$ for the $\CK$-orbit such that 
\[
\AV(\pi_{x_\delta}) = \overline{\CO(\delta)}.
\]


The signed Young tableau that parametrizes $\CO(\delta)$ is given case by case in \cite[Sections 3, 4, 7]{Tr05} for $G^* = \RU_n$, $\Sp_{2n}(\BR)$, and $\SO_{\frak n}$ respectively. The result also works for  
$G = \Mp_{2n}(\BR)$ as the computation of $\CK \cdot(\overline{\fuu}\cap \fpp)$ has nothing to do with the covering. 

Denote by $c_{r, s}$ a column with $r$ pluses and $s$ minuses. For a signed Young tableau $T$, 
let $T\oplus c_{r, s}$ be the equivalence class of the signed Young tableau formed by placing the entries of $c_{r, s}$ at the row-ends of  $T$ such that
\begin{itemize}
\item at most one sign is added to each row-end;
\item each sign is added to as high a row as possible.
\end{itemize}
One  can similarly define $c_{r, s} \oplus T$,   but the entries of $c_{r, s}$ are added to the row-starts of $T$.  It can be shown that the operation 
$\oplus$ is well-defined and associative in a suitable sense (see \cite[Section 2]{Tr05} for detailed explanations). 
Denote by $c_{\delta_i}$ the singleton Young tableau with  the signature $(1,0)$ if $\delta_i = 1$ and the signature $(0,1)$ if $\delta_i = -1$. 

\begin{thm}\label{thm:TrapaYoung}
	Let the notations be as above. Then the signed Young tableau parametrizing $\CO(\delta)$ is given by the following.

(i) $G^*=\RU_n$.  Then 
\[
\CO(\delta) = c_{\delta_1} \oplus c_{\delta_2}\oplus \cdots \oplus c_{\delta_n}. 
\]

(ii) $G^* = \SO_{2n}$. Then 
\[
\CO(\delta) = c_{\delta_n} \oplus \cdots \oplus c_{\delta_2}  \oplus c_{\delta_1} \oplus c_{\delta_1} \oplus c_{\delta_2}\oplus \cdots \oplus c_{\delta_n}. 
\]

(iii) $G^*= \SO_{2n+1}$. Then 
\[
\CO(\delta) = c_{\delta_n} \oplus \cdots \oplus c_{\delta_2}  \oplus c_{\delta_1} \oplus c_{\delta_0}\oplus c_{\delta_1} \oplus c_{\delta_2}\oplus \cdots \oplus c_{\delta_n}. 
\]

(iv) $G=\Sp_{2n}(\BR)$. Then
\[
\CO(\delta) = c_{\delta_n} \oplus \cdots \oplus c_{\delta_2}  \oplus c_{\delta_1} \oplus c_{-\delta_1} \oplus c_{-\delta_2}\oplus \cdots \oplus c_{-\delta_n}.
\] 

(v) $G = \Mp_{2n}(\BR)$. Then 
\[
\CO(\delta) = c_{\delta_n} \oplus \cdots \oplus c_{\delta_2}  \oplus c_{\delta_1} \oplus c_{-\delta_1} \oplus c_{-\delta_2}\oplus \cdots \oplus c_{-\delta_n}.
\]
\end{thm}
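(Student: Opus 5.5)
The plan is to derive Theorem \ref{thm:TrapaYoung} from Trapa's combinatorial description of $\CK(\overline{\fuu}\cap\fpp)$ for $\theta$-stable Borel subalgebras in \cite[Sections 3, 4, 7]{Tr05}, together with \cite[Proposition~8.2]{O15}. The work is then in matching conventions: Trapa's Levi labelling must be identified with our $x_\delta$, and his ordering of blocks must be identified with our upper-triangular Borel $B$.

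\textbf{Step 1.} By Proposition \ref{LLC} and Theorem \ref{def:discrete-series}, $\pi_{x_\delta}(\lambda)$ is the module $A_{\fbb_\lambda,x_\delta}(\lambda)$. Here $\fbb_\lambda$ is $\theta$-stable for $\theta=\Inn(x_\delta)$, since $T\subset K_{x_\delta}$ is a compact Cartan subgroup. Since $\lambda$ is $B$-dominant, $\fbb_\lambda=\mathrm{Lie}\,B$ is the upper-triangular Borel subalgebra. \cite[Proposition~8.2]{O15} then gives
\[
\AV(\pi_{x_\delta}(\lambda))=\CK(\overline{\fuu}\cap\fpp),
\]
with $\overline{\fuu}$ the strictly lower-triangular matrices. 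Note that this is $\overline{\fuu}$ and not $\fuu$, as in the Remark on the typo in \cite{Tr05}.

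\textbf{Step 2.} Fix the $\pm$-grading. The eigenvalues of $x_\delta$ on the standard basis vectors are read off from the explicit formulas above. Up to the scalar normalising $x^2=z_b$, the $i$-th basis vector gets sign $\delta_i$ in the $\GL_n$ case. In the orthogonal and symplectic cases the diagonal is $\delta_1,\dots,\delta_n$ followed by the reversed string. For $\SO_{2n+1}$ the middle vector carries the sign $\delta_0$, which records which of $(p_x,q_x)$ gets the extra $1$. For $\Sp_{2n}$ the scalar ${\rm i}^{-1}$ together with the symplectic form pairing $e_i$ with $e_{2n+1-i}$ turns the mirror entry into $-\delta_i$, since $\mathrm{U}(n)$-weights on $V$ versus $V^*$ swap signs.

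This identifies the Levi of $\fbb_{x_\delta}$ with $\prod \fu(p_i,q_i)$, with the $\pm$ labels along the diagonal being exactly the sequences of $c$'s appearing in (i)--(v).

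\textbf{Step 3.} Apply Trapa's theorem. For $\fu(p,q)$ and a $\theta$-stable parabolic with Levi blocks ordered along the diagonal, $\CK(\overline{\fuu}\cap\fpp)$ is the iterated sum $c_{1}\oplus c_2\oplus\cdots$ of the block columns. In types $B$, $C$, $D$ the same holds for the symmetric sequence of blocks, since the orbit is the intersection with $\fgg$ of the corresponding $\fu(p_x,q_x)$-type orbit \cite[Sections 4, 7]{Tr05}.

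Because we use $\overline{\fuu}$ rather than $\fuu$, Trapa's adding procedure (row-ends versus row-starts) must be oriented consistently with the lower-triangular radical. I would check the orientation directly on $\SU(1,1)$ as in the Remark, and on $\RU(2,1)$.

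\textbf{Step 4.} For $\Mp_{2n}(\BR)$, $\CK$, $\fpp$ and $\overline{\fuu}$ depend only on the linear quotient. Hence (v) coincides with (iv), with the same $x_\delta$ via \eqref{eq:iota2}.

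\textbf{Main obstacle.} The real difficulty is bookkeeping of signs and orientation: choosing between left and right placement, and between $\delta_i$ and $-\delta_i$ in the mirror half. I would settle this by checking the large case $\delta=$ the $x_b$ sequence against the displayed $\WF(\pi_{x_b}(\lambda))$ tableaux. Those must be a single row, or two rows for $\SO_{2n}$, since Proposition \ref{LLC} says the corresponding representation is large. This test fixes all conventions uniquely.
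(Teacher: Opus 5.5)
\textbf{Verdict: genuine gap.} Your overall route is the paper's route. The paper justifies this theorem only by quoting \cite[Sections 3, 4, 7]{Tr05} for $\CK(\overline{\mathfrak u}\cap\mathfrak p)$, with the $\overline{\mathfrak u}$ correction from the Remark and \cite[Proposition~8.2]{O15}, and by noting that the covering is irrelevant for $\Mp_{2n}(\BR)$. The one thing you add is the convention matching in Steps 2--3, and that is exactly where the argument breaks.

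\textbf{The matching in Step 2 is false in types B, C, D.} You correctly say that the diagonal of $x_\delta$ reads $\delta_1,\dots,\delta_n$ followed by its mirror. You then assert that this is ``exactly'' the sequence in (ii)--(v). It is not. Formulas (ii)--(v) list $\delta_n,\dots,\delta_1,\ast,\delta_1,\dots,\delta_n$, so $\delta_1$ is innermost; on the diagonal of $x_\delta$, $\delta_1$ is outermost.

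Running Step 3 on the diagonal sequence gives $c_{\delta_1}\oplus\cdots\oplus c_{\delta_n}\oplus c_{\delta_n}\oplus\cdots\oplus c_{\delta_1}$ and its analogues. This is in general a different orbit, not a harmless relabeling. Take $\SO(4,2)$, $\delta=(+,+,-)$, $x_\delta=\mathrm{diag}(1,1,-1,-1,1,1)$:
\begin{itemize}
\item $\overline{\mathfrak u}\cap\mathfrak p$ is spanned by the root spaces for $-e_i\pm e_3$, $i=1,2$.
\item A generic element maps $\langle e_1,e_2\rangle\to\langle e_3,e_4\rangle\to\langle e_5,e_6\rangle\to 0$, and both maps are invertible.
\item Hence $\CO(\delta)$ has rows $({+}{-}{+},\,{+}{-}{+})$, which is what the diagonal nesting produces.
\item Formula (ii) instead gives $c_{-}\oplus c_{+}\oplus c_{+}\oplus c_{+}\oplus c_{+}\oplus c_{-}=({-}{+}{-},\,{+},\,{+},\,{+})$, of partition $[3,1,1,1]$.
\end{itemize}
So your argument, carried out faithfully, proves a different formula. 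The formulas (ii)--(v) are what you get when $\delta_1$ labels the torus coordinate adjacent to the middle, i.e.\ when Trapa's factor $\mathfrak u(p_1,q_1)$ is the innermost one. The missing step is to make that identification, and to check it is compatible with how $\delta$ is attached to $x$ and to $\tau$.

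\textbf{The large-case calibration cannot supply this.} In types C and D, $\delta$ is alternating in the large case. Reversing the nesting of an alternating sequence gives the same tableau when $n$ is odd, and the globally sign-flipped tableau when $n$ is even. The test therefore cannot distinguish ``reverse the nesting'' from ``flip the overall sign convention''. These two differ as soon as $\delta$ is not alternating; in the example above even the partition changes.

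In fact, for even $n$ the displayed large tableau for $\SO_{2n}$, $({-}{+}\cdots{+}{-},\,{+})$, is the one produced by the diagonal nesting, not by (ii). Your check would point away from the statement, not confirm it.

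\textbf{A secondary point.} The justification in Step 3, that the orbit is ``the intersection with $\mathfrak g$ of the corresponding $\mathfrak u(p_x,q_x)$-type orbit'', is not a general principle. In types B, C, D you should instead invoke Trapa's rule directly, stating his ordering convention for the Levi factors explicitly.
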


\subsection{Examples}

To illustrate the algorithm above, we give examples for unitary and orthogonal groups.
More examples can be executed on the website \cite{Luo}.

\subsubsection{Unitary groups} 

We first carry out the algorithm for a discrete Harish-Chandra parameter $(\lambda, \delta)$ for $\RU(10,8)$.
Write
\[
\lambda=\lambda_{\alpha_1} \oplus \cdots \oplus \lambda_{\alpha_{18}} 
\]
where $\alpha_1<\alpha_2<\cdots<\alpha_{18}$ are half of odd integers, and $\delta$ is
\[
+++++-----++++--+-.
\]
Then by the associativity of the operation $\oplus$, one has
\[
\begin{split}
\CO(\delta) &= c_{\delta_1} \oplus c_{\delta_2}\oplus \cdots \oplus c_{\delta_{18}} = c_{5, 0} \oplus c_{0, 5} \oplus c_{4, 0} \oplus c_{0, 2} \oplus c_{1, 0} \oplus c_{0, 1} \\
& = 
\begin{ytableau} 
+ & - & + & - & + & - \\
+ & - & + & - \\
+ & - & + \\
+ & - & + \\
+ & - \\
\end{ytableau}
\end{split}
\]

\subsubsection{Special orthogonal groups}
Since the formula for special orthogonal groups involves an auxiliary sign, we include one example to clarify this feature.
Consider a discrete Harish-Chandra parameter $(\lambda,\delta)$ for $\SO(16,15)$ with character
\[
--+++++----++-+.	
\]
By the same argument as in Subsection~\ref{ssec:AW-SO}, $\delta_0 = -1$.
Then
\[
\begin{aligned}
\CO(\delta) &= c_{\delta_{15}} \oplus \cdots \oplus c_{\delta_2} \oplus c_{\delta_1} \oplus c_{\delta_0}\oplus c_{\delta_1} \oplus c_{\delta_2}\oplus \cdots \oplus c_{\delta_{15}}
    \\
    &= c_{1, 0} \oplus c_{0, 1} \oplus c_{2, 0} \oplus c_{0, 4} \oplus c_{5, 0} \oplus c_{0, 2} \oplus c_{0, 1} \oplus c_{0, 2} \oplus c_{5, 0} \oplus c_{0, 4} \oplus c_{2, 0} \oplus c_{0, 1} \oplus c_{1, 0}
    \\
    &=
\begin{ytableau} 
+&- & + & - & + & - & + & - & + & - & + \\
+&- & + & - & + & - & + \\
- & + & - & + & - \\
- & + & - & + & - \\
+ & - & +\\
\end{ytableau}
\end{aligned}
\]

 \section{Arithmetic Wavefront Set and Associated Variety} \label{sec:AWFAV}

Following Theorem \ref{FRS-WFS} (\cite[Theorem 9.2]{JLZ25}),  when the Vogan packet $\Pi_\varphi[G^*]$ is not a singleton, associated to each $\pi =\pi(\varphi,\chi)\in \Pi_\varphi[G^*]$,  there is a real distinguished nilpotent orbit 
\[
\CO(\pi) = \CO(\varphi,\chi) \subset {\frak g}, 
\]
which is the unique maximal element of the arithmetic wavefront set ${\rm WF_{ari}}(\pi)$ defined in {\it loc. cit.}  under the real topological order. The case of discrete series representations has been recalled in Section \ref{ssec:AWF}. 

In terms of the Kostant-Sekiguchi correspondence (see \cite[Section 2]{AV21} for instance) given by 
\begin{eqnarray}\label{KSC}
 \Set{\text{nilpotent $G$-orbits on $\frak g$}}  & \stackrel{\sim}{\longrightarrow} & \Set{\text{nilpotent $\CK$-orbits on $\frak p$}}, \\
   \CO & \longmapsto & \CO_{\frak p}, \nonumber
 \end{eqnarray}
we prove the following result.

\begin{thm} \label{thm:AV}
Assume that $\pi = \pi(\varphi,\chi)$ is an irreducible Casselman-Wallach representation of $G$ with a generic $L$-parameter $\varphi$ and the Vogan packet $\Pi_\varphi[G^*]$ being not a singleton. Then the associated variety ${\rm AV}(\pi)$ is the closure of 
\[
\CO(\pi)_{\frak p} = \CO(\varphi,\chi)_{\frak p},
\]
the image of $\CO(\pi) = \CO(\varphi,\chi)$ under the Kostant-Sekiguchi correspondence.
\end{thm}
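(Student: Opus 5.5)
We prove Theorem \ref{thm:AV} by reducing to discrete series and then comparing two explicit combinatorial algorithms: the $L$-parameter descent of Section \ref{ssec:AWF} and Trapa's formula in Theorem \ref{thm:TrapaYoung}.

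\emph{Reduction to discrete series.} Write $\varphi=\tau\oplus\varphi_{\rm gp}\oplus{}^c\tau^\vee$ as in \eqref{varphi}. Then $\pi(\varphi,\chi)$ is the (irreducible, since $\varphi$ is generic) parabolic induction from a Levi subgroup $\prod_i \GL\times G_0$, namely of $\bigotimes|\cdot|^{s_i}\tau_i\otimes\pi_0$. Here $\pi_0$ lies in the packet of $\varphi_{\rm gp}$. The ``multiplicity'' part of $\varphi_{\rm gp}$ is itself a further induction from a discrete parameter of a smaller group tensored with $\GL$ pieces. On the representation side, the associated variety of an induced representation is the $\CK$-saturation of the induced orbit closure. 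On the arithmetic side, \cite{JLZ25} shows that $\CO(\varphi,\chi)$ is obtained from the discrete part $\CO(\varphi_{\rm disc},\chi)$ by adding the $\GL$-blocks, i.e. by real orbit induction. Since Kostant--Sekiguchi commutes with induction in the form needed (Barbasch--Vogan / Trapa), it suffices to treat discrete $\varphi$. If a direct reference for this compatibility is lacking, we would instead check it on signed tableaux: inducing from $\GL_m\times G_0$ adds two columns of length $m$ with appropriate signs, and we would verify that this matches the effect of the descent algorithm on added pairs $\rho_\alpha\oplus\rho_\alpha$.

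\emph{Discrete case.} By Proposition \ref{LLC}, $\pi(\varphi,\chi)=\pi_{x_\delta}(\lambda)$ with $\chi=\chi_\delta$, using the explicit sign twists $(\chi_\delta)_i=(-1)^{n-i}\delta_i$ and their analogues recorded case by case. Theorem \ref{thm:TrapaYoung} then gives ${\rm AV}(\pi)=\overline{\CO(\delta)}$, where $\CO(\delta)$ is an iterated $\oplus$ of singleton columns. Under Kostant--Sekiguchi, signed tableaux of real orbits in $\Fg$ and of $\CK$-orbits on $\Fp$ are identified. So it remains to prove the purely combinatorial identity
\[
\CO(\varphi,\chi_\delta)=c_{\delta_1}\oplus\cdots\oplus c_{\delta_n}
\]
(and its symmetrized versions in the other types). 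We argue by induction on $n$, peeling off the first row. On Trapa's side, grouping $\delta$ into maximal constant runs, the first row of the $\oplus$-sum has length equal to the number of runs, suitably doubled and adjusted in types B, C, D. The sign switches of $\chi_\delta=(-1)^{n-i}\delta_i$ occur exactly where $\delta$ is constant, so $n-s_\chi$ equals the number of runs of $\delta$; this matches $p_1$ in Table \ref{tab:descent}. One then checks that the last sign $\epsilon_1$ agrees with the row-end sign. The key lemma is that deleting the first row of $c_{\delta_1}\oplus\cdots\oplus c_{\delta_n}$ gives $c_{\delta'_1}\oplus\cdots\oplus c_{\delta'_{s}}$, where $\delta'$ corresponds under $\tau$ to the $L$-descent $\chi'$ (with the updated auxiliary sign $\delta_0'$ in the orthogonal cases). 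We would prove this by tracking, for each run of $\delta$, which column entry is absorbed into row one: exactly one entry per run, namely the last sign of that run.

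\emph{Main obstacle.} The hard part is the key lemma in types B/D and C/Mp. There the doubled, palindromic sums $c_{\delta_n}\oplus\cdots\oplus c_{\delta_1}\oplus c_{\delta_0}\oplus c_{\pm\delta_1}\oplus\cdots$ interact with the parity shifts $p_1=2(n-s_\chi)\pm1$, with the $\delta_0$ bookkeeping, and with the sign conventions of $\chi'$ in Table \ref{tab:descent}. We would first verify the unitary case, where the lemma is clean, then reduce the other types to it by viewing the symmetric sum as a unitary sum of length $2n$ or $2n+1$ and checking that the descent for $\SO/\Sp$ is the ``symmetric half'' of the unitary descent. The worked examples for $\RU(10,8)$ and $\SO(16,15)$ serve as consistency checks on the sign conventions.
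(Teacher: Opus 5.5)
The gap is in the reduction to discrete series, specifically in how you handle repeated summands of $\varphi_{\rm gp}$. Your discrete-series argument is essentially the paper's: you peel off the first row of Trapa's $\oplus$-sum one run of $\delta$ at a time, match $p_1$ and $\epsilon_1$ with Table~\ref{tab:descent}, and match the HC-descent $\delta'$ with the $L$-descent $\chi'$. Reading the palindromic sums in types B, C, D as unitary sums is a harmless repackaging of the paper's case-by-case recursions, though the sign conventions of Table~\ref{tab:descent} still have to be checked type by type.

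You propose to split a pair $\rho_{\alpha_i}\oplus\rho_{\alpha_i}$ off $\varphi_{\rm gp}$ as a $\GL$-block and read $\AV(\pi(\varphi_{\rm gp},\chi))$ off the induced module. That tempered induced module is in general reducible. For instance, when $m_i=2$, removing the pair kills the $\BZ_2$-factor of $\CS_{\varphi_{\rm gp}}$ indexed by $\alpha_i$, and the two values of $\chi(\alpha_i)$ label different summands of the same induced module. The associated variety of the induced module is the union of those of its summands, so it does not determine $\AV(\pi(\varphi_{\rm gp},\chi))$.

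Correspondingly, your arithmetic claim that $\CO(\varphi_{\rm gp},\chi)$ is the real orbit induced from the discrete part is false. Take $G=\RU(2,1)$ and $\varphi=2\varsigma_\alpha\oplus\varsigma_\beta$. Run the algorithm of Section~\ref{ssec:AWF} on $\varsigma_{\alpha'_1}\oplus\varsigma_{\alpha'_2}\oplus\varsigma_\beta$, with $\chi(\alpha)$ repeated twice in the sign pattern. This gives the regular orbit $[3]$ when $\chi(\alpha)=\chi(\beta)$, and an orbit of shape $[2,1]$ when $\chi(\alpha)=-\chi(\beta)$. The two members of the packet on $\RU(2,1)$ differ only in $\chi(\alpha)$. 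They are the two summands of one unitary principal series induced from $\BC^\times\times\RU(1)$. The associated variety of that principal series, like the real orbit induced from this Levi, is the closure of the regular orbit. So neither half of your reduction separates the two members, and your fallback check on added pairs $\rho_\alpha\oplus\rho_\alpha$ would fail.

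The missing idea is coherent continuation, which is how the paper proceeds. Do not split the pairs. Replace each block $m_i\rho_{\alpha_i}$ by $m_i$ distinct $\rho_{\alpha'_j}$ to get a discrete parameter $\varphi'$ of the same group $G'$. Pull $\chi$ back along the surjection $p:\CS_{\varphi'}\twoheadrightarrow\CS_{\varphi_{\rm gp}}$. Then $\pi(\varphi_{\rm gp},\chi)$ and the discrete series $\pi(\varphi',\chi\circ p)$ lie in one coherent family, so they have the same associated variety by \cite[Lemma 2.7]{V79}. On the arithmetic side, $\CO(\varphi_{\rm gp},\chi)=\CO(\varphi',\chi\circ p)$ by the algorithm of \cite{JLZ25}.

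Parabolic induction is then needed only for the $\tau$-part. There the induced module is irreducible. Distinguishedness of $\CO(\varphi_{\rm gp},\chi)$ (rows of equal length carry equal signs) makes the induced orbit the single real orbit obtained by lengthening the first row by $2\dim\tau$. This is induction of the regular orbit of $\frak{gl}_t$. It is not the addition of ``two columns of length $m$'', which is induction of the zero orbit and gives the wrong shape for the $\GL_2(\BR)$-blocks.
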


By the relation between the associated cycle and the wavefront cycle established in \cite{SV00},  we have the following immediate consequence of  Theorem \ref{thm:AV}. 
Note that if $\Pi_\varphi[G^*]$ is a singleton, then $G$ is quasi-split and 
 $\pi$ is generic with respect to every Whittaker datum of $G$, in which case ${\rm AV}(\pi)$ is the whole nilpotent cone in $\frak{p}$. 

\begin{cor}\label{AWF=AV}
Assume that $\pi = \pi(\varphi,\chi)$ is an irreducible Casselman-Wallach representation of $G$ with a generic $L$-parameter $\varphi$. Then Conjecture \ref{conj:wfss} holds for both ${\rm WF_{tr}}(\pi)$ and ${\rm WF_{\rm ari}}(\pi)$:
\[
 {\rm WF_{tr}}(\pi)^{r-\max} = {\rm WF_{\rm ari}}(\pi)^{r-\max}.
\]
\end{cor}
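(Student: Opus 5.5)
The plan is to deduce the corollary from Theorem \ref{thm:AV} by means of the Schmid--Vilonen theorem \cite{SV00}. Their result says that the wavefront cycle of $\Theta_\pi$ corresponds to the associated cycle of $\pi_{\rm HC}$ under the Kostant--Sekiguchi correspondence \eqref{KSC}. In particular, the maximal real nilpotent orbits in $\wf_\tr(\pi)$ are exactly the orbits $\CO$ for which $\CO_{\frak p}$ is an irreducible component of ${\rm AV}(\pi)$ of maximal dimension. Moreover, since the associated cycle is supported on the full associated variety with positive multiplicities, every component of ${\rm AV}(\pi)$ contributes. Concretely, I would first record the identity
\[
\wf_\tr(\pi)^{r-\max}=\Set{\CO\in\CN_\BR(\Fg)_\circ | \ol{\CO_{\frak p}} \text{ is an irreducible component of } {\rm AV}(\pi)}.
\]
This uses two facts: \eqref{KSC} is an order isomorphism between the Hausdorff closure order on $G$-orbits and the Zariski closure order on $\CK$-orbits (Barbasch--Sekiguchi), and ${\rm AV}(\pi)$ is equidimensional (Vogan), so all its components have the same dimension.

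With this identity in hand, the proof splits into two cases.

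\emph{Case 1: $\Pi_\varphi[G^*]$ is not a singleton.} This is equivalent to $\CS_\varphi$ being non-trivial together with the relevant pure inner forms. Theorem \ref{thm:AV} gives ${\rm AV}(\pi)=\ol{\CO(\varphi,\chi)_{\frak p}}$. The identity above then yields $\wf_\tr(\pi)^{r-\max}=\{\CO(\varphi,\chi)\}$, and Theorem \ref{FRS-WFS}(2) identifies this set with $\wf_\ari(\pi)^{r-\max}$.

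\emph{Case 2: $\Pi_\varphi[G^*]$ is a singleton.} Then $G$ is quasi-split and $\pi$ is generic for every Whittaker datum. By Kostant and Matumoto \cite{Mat92}, ${\rm AV}(\pi)$ is the whole nilpotent cone of $\frak p$. Its irreducible components are the closures of the $\CK$-orbits that are Kostant--Sekiguchi images of the real regular nilpotent orbits. Hence $\wf_\tr(\pi)^{r-\max}$ consists of all real regular orbits. On the arithmetic side, a singleton packet forces $\CS_\varphi$ to be trivial (or the packet to degenerate to it), so Theorem \ref{FRS-WFS}(1) gives the same set.

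Finally, the $s$-max identity in Conjecture \ref{conj:wfss} for $\wf_\tr$ and $\wf_\ari$ follows from the $r$-max identity. Both sets lie in a single stable orbit: in Case 1 by uniqueness, and in Case 2 because all the orbits are regular. Maximal elements under $\leq_\st$ are therefore those in that stable orbit, and the $s$-max sets agree.

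The step I expect to be the main obstacle is the bookkeeping in Case 2. The singleton condition must match the hypothesis of Theorem \ref{FRS-WFS}(1) exactly, and this needs care in the special cases: even orthogonal groups with $p,q$ odd, and parameters with non-trivial $\CS_\varphi$ where only one member is realized on the relevant pure inner forms. There I would argue directly via large representations, as in Proposition \ref{LLC}. I would also need to quote the precise Schmid--Vilonen statement that the wavefront cycle and the associated cycle share leading orbits.
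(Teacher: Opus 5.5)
Your proposal is correct and follows essentially the paper's route. The paper deduces the corollary directly from Theorem \ref{thm:AV} via the Schmid--Vilonen comparison of wavefront and associated cycles. It handles the singleton-packet case by noting that $\pi$ is then generic for every Whittaker datum, so that ${\rm AV}(\pi)$ is the whole nilpotent cone in $\frak p$.

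The bookkeeping obstacle you anticipate does not arise. The bijection $\widehat{\CS_\varphi}\to\Pi_\varphi[G^*]$ makes ``singleton packet'' literally equivalent to ``$\CS_\varphi$ trivial'', so your two cases match the two cases of Theorem \ref{FRS-WFS} exactly.
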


\subsection{Reduction} 
We first reduce the proof of Theorem \ref{thm:AV} to the case where $\pi$ is a discrete series representation.
Recall that $\varphi$ is given by \eqref{varphi}, \eqref{gp} and \eqref{tau}. First assume that $G$ is linear. Since $\varphi$ is generic, it follows from 
\cite{SV80} that $\pi(\varphi,\chi)$ is an irreducible normalized parabolic induction
\be \label{ind}
\pi(\varphi,\chi) \cong |\cdot|_E^{s_1}\pi_{\tau_1} \times \cdots \times |\cdot|_E^{s_l}\pi_{\tau_l} \times \pi(\varphi_{\rm gp},\chi),
\ee
where $\pi_{\tau_j}$ is the (limit of) discrete series representation of ${\rm GL}_1(E)$ or ${\rm GL}_2(E)$ with $L$-parameter $\tau_j$, and $\pi(\varphi_{\rm gp}, \chi)$ is a representation of a classical group $G'$ in the Witt tower of $G$, with the $L$-parameter $\varphi_{\rm gp}$ as in \eqref{gp}. A slight modification of \eqref{ind} also holds when $G=\Mp_{2n}(\BR)$. 

Consider a discrete $L$-parameter $\varphi'$ of $G'$ of the form 
\[
\varphi' = \rho_{\alpha_1'}\oplus \cdots \oplus \rho_{\alpha_m'} \oplus \varphi_{\rm quad}, 
\]
where $m= m_1+\cdots+m_r$ and $\alpha'_1<\cdots < \alpha'_m$, with $\varphi_{\rm quad}$ as in \eqref{gp}. There is a natural embedding of centralizers 
\[
C_{G'^\vee}({\rm im}\, \varphi')   \hookrightarrow C_{G'^\vee}({\rm im}\, \varphi_{\rm gp}), 
\]
which in turn induces a surjective homomorphism of component groups
\[
p: \CS_{\varphi'} \twoheadrightarrow \CS_{\varphi_{\rm gp}}. 
\]
Let $\chi ' = \chi\circ p \in \widehat{\CS_{\varphi'}}$. Then $\pi(\varphi_{\rm gp},\chi)$ and $\pi(\varphi',\chi')$ lie in the same coherent family (see \cite[Chapter 7]{V81}), and by the translation principle (\cite[Lemma 2.7]{V79}) it holds that
\[
\AV(\pi(\varphi_{\rm gp},\chi))= \AV(\pi(\varphi',\chi')).
\]

Assume that Theorem \ref{thm:AV} holds for the discrete series representation $\pi(\varphi',\chi')$, so that  $\AV(\pi(\varphi',\chi'))$ is the closure of 
$\CO(\varphi',\chi')_{\frak p}$. By the algorithm in \cite{JLZ25}, we have
$\CO(\varphi_{\rm gp},\chi) = \CO(\varphi',\chi')$, hence $\AV(\pi(\varphi_{\rm gp},\chi))$ is the closure of $\CO(\varphi_{\rm gp},\chi)_{\frak p}$. Write 
\[
\CO(\varphi_{\rm gp},\chi) = ((p_1,\epsilon_1), (p_2,\epsilon_2), \ldots, (p_k, \epsilon_k)). 
\]
Put $t =\dim \tau$. The  algorithm also gives that
\[
\CO(\varphi, \chi) = ((p_1+2t,\epsilon_1), (p_2,\epsilon_2), \ldots, (p_k, \epsilon_k)).
\]

By \cite[Proposition 7.13]{JLZ25}, $\CO(\varphi_{\rm gp},\chi)$ is distinguished. In other words, it holds that if $p_i = p_j$ then $\epsilon_i = \epsilon_j$. 
It follows that the Lusztig-Spaltenstein induction (\cite{LS79}) of the regular nilpotent orbit in $\frak{gl}_t(E)$ and the nilpotent orbit $\CO(\varphi_{\rm gp},\chi)$ in $\frak{g}'$ is the nilpotent orbit  $\CO(\varphi, \chi)$ in ${\frak g}$. Then by \eqref{ind} and \cite{BB},  $\AV(\pi(\varphi, \chi))$ is the closure of $\CO(\varphi,\chi)_{\frak p}$. This proves that Theorem \ref{thm:AV} holds for 
$\pi(\varphi, \chi)$.

\subsection{Proof for discrete series} \label{ssec:DS}

The rest of this section is devoted to the proof of Theorem \ref{thm:AV} for discrete series representations. Let $\pi = \pi(\varphi, \chi)$ be a discrete series representation of $G$. We need to show that $\AV(\pi)$ is the closure of $\CO(\varphi,\chi)_{\frak p}$. 
	
The proof is case-by-case. We first outline the common strategy.
	\begin{enumerate}
		\item Translate the algorithm in Section \ref{ssec:AV} into a recursive algorithm parallel to the one in 
		Section \ref{ssec:AWF}.
            Concretely, this produces the first row of the signed Young tableau $\CO(\delta)$ together with a new Harish-Chandra parameter $\delta'$, which then generates the remaining rows of $\CO(\delta)$. We call $\delta'$ an {\bf HC-descent} of $\delta$.
		\item Show that the first rows of $\CO(\delta)$ and $\CO(\varphi,\chi)$ produced by the two algorithms agree.
		\item Show that the HC-descent $\delta'$ of $\delta$ and the $L$-descent $\chi'$ of $\chi$ match as in Section \ref{ssec:AV}.
	\end{enumerate}
	Then by induction,  the signed Young tableaux $\CO(\delta)$ and $\CO(\varphi,\chi)$ generated by both algorithms are the same. 
    
    We present the details for each family of real classical groups.

\medskip
\noindent
{\bf The unitary groups:}

\medskip
\noindent
(i) {\it Recursive algorithm}. 
			We first divide the sequence $\delta$ into consecutive subsequences, where the signs in each subsequence are the same:
			\[
				\delta = (m_1  \delta_{k_1}, m_2 \delta_{k_2}, \dots, m_{\ell} \delta_{k_\ell}).
			\]
			For example, if \(\delta = (+1, +1, -1, -1, -1, +1)\), then $(m_{1}, m_{2}, m_{3}) = (2, 3, 1)$  and 
			$(\delta_{k_1}, \delta_{k_2}, \delta_{k_3}) = (\delta_1, \delta_3, \delta_6) = (+1, -1, +1)$. 
			By the associativity of the operation $\oplus$, we can rewrite $\CO(\delta)$ as 
						\[
			\CO(\delta)  = c_{m_1 \delta_{k_1}} \oplus \cdots \oplus c_{m_\ell \delta_{k_\ell}},
			\]
			where $c_{m_i \delta_{k_i}}$ denotes the column with  $m_i$ signs $\delta_{k_i}$. 
			Then the first row of  $\CO(\delta)$ is given by picking one sign from each $m_i \delta_{k_i}$, and the remaining rows of $\CO(\delta)$ are generated by the same algorithm for the HC-descent 
			\be\label{HCD}
			\delta': = ((m_1-1)\delta_{k_1}, (m_2-1)\delta_{k_2}, \ldots, (m_\ell-1)\delta_{k_\ell}). 
			\ee
			This indeed gives a recursive algorithm for the associated variety.

\medskip
\noindent
(ii) {\it Matching the first rows.}
		 We now show that the first rows of $\CO(\delta)$ and $\CO(\varphi,\chi)$ are the same. It suffices to check that their lengths and last signs are respectively the same.  			Recall from Section \ref{ssec:AV} that $\delta$ and $\chi$ are related by 
		 \[
		 \delta_i = (-1)^{n-i} \chi_i,\quad i=1,2,\dots, n.
		 \]
It follows that
		\be \label{sgnHC}
		\sgn(\chi) = \set{ 1\leq i< n |  \delta_i \delta_{i+1}=1}. 
		\ee		
	Hence the length of the first row of  $\CO(\delta)$ is 
			\[
				\ell = 1 +  \# \sgn(\delta)  = 1 + (n - 1) -  \# \sgn(\chi) = n- s_\chi  = p_1.
			\]
			The last sign of the first row of $\CO(\delta)$ is clearly $\delta_n = \chi_n = \epsilon_1$. This proves that 
			the first rows of  $\CO(\delta)$ and $\CO(\varphi, \chi)$ are the same.

\medskip
\noindent
(iii) {\it Matching HC-descent and $L$-descent.} Finally, we show that $\delta'$ and $\chi'$ are related as in Section \ref{ssec:AV}. 
	From \eqref{HCD} and \eqref{sgnHC} it is clear that 
	\[
	\delta' = (\delta_j : j\in {\rm sgn}(\chi)). 
	\]
Recall from Section \ref{ssec:AWF} that 
			\[
			\chi' = ((-1)^{n-j+1} \chi_n : j\in {\rm sgn}(\chi)). 
			\]
			Take $j\in {\rm sgn}(\chi)$, and suppose that it is the $k$-th largest element in  $\sgn(\chi)$.
			It suffices to verify that
			\begin{equation}\label{HCLD}
							\delta_j  =  (-1)^{k - 1} \cdot (-1)^{n-j+1}\chi_n  = (-1)^{n-j-k}\chi_n.
			\end{equation}
			In fact, one has
			\[
				n - j - k = \# \set{ j \leq i < n | i \notin \sgn(\chi)} = \# \set{ j \leq i < n | i \in  \sgn(\delta)}.
			\]
			Therefore,  $\delta_j$ and $\delta_n = \chi_n$ differ by $(-1)^{n-j-k}$, which shows that 
			\eqref{HCLD} holds.

\medskip
\noindent
{\bf The even special orthogonal groups:}

\medskip
\noindent
(i) {\it Recursive algorithm.}
			We first write $\delta = (m_1  \delta_{k_1}, m_2 \delta_{k_2}, \dots, m_{\ell} \delta_{k_\ell})$ as in the unitary case.
			By the associativity of the operation $\oplus$, we can rewrite $\CO(\delta)$ as 
						\[
			\CO(\delta)  = c_{m_\ell \delta_{k_\ell}} \oplus \cdots \oplus c_{m_2 \delta_{k_2}} \oplus c_{2m_1 \delta_{k_1}} \oplus c_{m_2 \delta_{k_2}} \oplus \cdots \oplus c_{m_\ell \delta_{k_\ell}},
			\]
			Then the first row of  $\CO(\delta)$ is given by picking one sign from each $m_i \delta_{k_i}, i \geq 2$ and $2m_1 \delta_{k_1}$, and the remaining rows of $\CO(\delta)$ are generated by the same algorithm for the HC-descent 
			\be\label{HCDSo}
			\delta': = ((m_1-1)\delta_{k_1}, (m_2-1)\delta_{k_2}, \ldots, (m_\ell-1)\delta_{k_\ell}), \quad \delta_0' = \delta_1.
			\ee
			This indeed gives a recursive algorithm for the associated variety.
		
\medskip
\noindent		
		(ii) {\it Matching the first rows}. 
		 We now show that the first rows of $\CO(\delta)$ and $\CO(\varphi,\chi)$ are the same. It suffices to check that their lengths and last signs are respectively the same.  			
         Recall from Section \ref{ssec:AV} that $\delta$ and $\chi$ are related by 
		 \[
		 \delta_i = (-1)^{n-i} \chi_i,\quad i=1,2,\dots, n.
		 \]
It follows that
		\be \label{sgnHCSo}
		\sgn(\chi) = \set{ 1\leq i< n |  \delta_i \delta_{i+1}=1}. 
		\ee		
	Hence the length of the first row of  $\CO(\delta)$ is 
			\[
				2\ell - 1 = 1 + 2 \# \sgn(\delta)  = 1 + 2 (n - 1 -  \# \sgn(\chi)) = 2(n- s_\chi) - 1  = p_1.
			\]
			The last sign of the first row of $\CO(\delta)$ is $\delta_n = \chi_n = \epsilon_1$. This proves that 
			the first rows of  $\CO(\delta)$ and $\CO(\varphi, \chi)$ are the same. 

\medskip
\noindent
	(iii)  {\it Matching HC-descent and $L$-descent}.  Finally, we show that the auxiliary signs are also matched, i.e., $\delta_0' = (-1)^{n - s_\chi - 1}\chi_n$, and $\delta'$ and $\chi'$ are related as in Section \ref{ssec:AV}. 

    For the auxiliary sign, note that $n - s_\chi - 1 = s_\delta$, so $\delta_0' = (-1)^{n - s_\chi - 1}\chi_n$.
    
	From \eqref{HCDSo} and \eqref{sgnHCSo} it is clear that 
	\[
	\delta' = (\delta_j : j\in {\rm sgn}(\chi)). 
	\]
Recall from Section \ref{ssec:AWF} that 
			\[
			\chi' = ((-1)^{j}  : j\in {\rm sgn}(\chi)). 
			\]
			Take $j\in {\rm sgn}(\chi)$, and suppose that it is the $k$-th smallest element in  $\sgn(\chi)$.
			It suffices to verify that
			\begin{equation}\label{HCLDSo}
							\delta_j  =  (-1)^{k} \cdot \delta_0' \cdot (-1)^{j}  = (-1)^{j-k} \delta_{1}.
			\end{equation}
			In fact, one has
			\[
				j - k = \# \set{ 1 \leq i < j | i \notin \sgn(\chi)} = \# \set{ 1 \leq i < j | i \in  \sgn(\delta)}.
			\]
			Therefore,  $\delta_j$ and $\delta_1$ differ by $(-1)^{j-k}$, which shows that 
			\eqref{HCLDSo} holds.

\medskip
\noindent
{\bf The odd special orthogonal groups:}

\medskip
\noindent
(i) {\it Recursive algorithm}. 
			We first write $\delta = (m_1  \delta_{k_1}, m_2 \delta_{k_2}, \dots, m_{\ell} \delta_{k_\ell})$ as in the unitary case.
			By the associativity of the operation $\oplus$, we can rewrite $\CO(\delta)$ as 
						\[
			\CO(\delta)  = c_{m_\ell \delta_{k_\ell}} \oplus \cdots \oplus c_{m_1 \delta_{k_1}} \oplus c_{\delta_0} \oplus c_{m_1 \delta_{k_1}} \oplus \cdots \oplus c_{m_\ell \delta_{k_\ell}},
			\]
			Then:
            \begin{itemize}
                \item if $\delta_0\delta_1 = -1$, then the first row of  $\CO(\delta)$ is given by picking one sign from each $m_i \delta_{k_i}, i \geq 1$ and $\delta_0$, and the remaining rows of $\CO(\delta)$ are generated by the same algorithm for the HC-descent 
			\be\label{HCDSoOdd1}
			\delta': = ((m_1-1)\delta_{k_1}, (m_2-1)\delta_{k_2}, \ldots, (m_\ell-1)\delta_{k_\ell}).
			\ee
                \item if $\delta_0\delta_1 = 1$, then the first row of  $\CO(\delta)$ is given by picking one sign from each $m_i \delta_{k_i}, i \geq 2$ and one $\delta_0 = \delta_1$ from $(2m_1 + 1)\delta_1$, and the remaining rows of $\CO(\delta)$ are generated by the same algorithm for the HC-descent 
			\be\label{HCDSoOdd2}
			\delta': = (m_1\delta_{k_1}, (m_2-1)\delta_{k_2}, \ldots, (m_\ell-1)\delta_{k_\ell}).
			\ee
            \end{itemize}
			These indeed give a recursive algorithm for the associated variety.
		
\medskip
\noindent		
		(ii) {\it Matching the first rows}. 
		 We now show that the first rows of $\CO(\delta)$ and $\CO(\varphi,\chi)$ are the same. It suffices to check that their lengths and last signs are respectively the same.  			
         Recall from Section \ref{ssec:AV} that $\delta$ and $\chi$ are related by 
		 \[
		 \delta_i = (-1)^{i} \delta_0 \chi_i,\quad i=1,2,\dots, n.
		 \]
It follows that
		\be \label{sgnHCSoOdd}
		\sgn(\chi) = \set{ 0\leq i< n |  \delta_i \delta_{i+1}=1}. 
		\ee		
	Hence the length of the first row of  $\CO(\delta)$ is 
			\[
				2\ell + 1 = 1 + 2 \# \sgn(\delta)  = 1 + 2 (n -  \# \sgn(\chi)) = 2(n- s_\chi) + 1  = p_1.
			\]
			The last sign of the first row of $\CO(\delta)$ is $\delta_n = (-1)^n \delta_0 \chi_n = \epsilon_1$. This proves that 
			the first rows of  $\CO(\delta)$ and $\CO(\varphi, \chi)$ are the same. 

        \medskip
\noindent
	(iii)  {\it Matching HC-descent and $L$-descent}.  Finally, we show that $\delta'$ and $\chi'$ are related as in Section \ref{ssec:AV}. 

	From \eqref{HCDSoOdd1}, \eqref{HCDSoOdd2} and \eqref{sgnHCSoOdd} it is clear that 
	\[
	\delta' = (\delta_j : j\in {\rm sgn}(\chi)). 
	\]
Recall from Section \ref{ssec:AWF} that 
			\[
			\chi' = ((-1)^{j - 1} \delta_0 \chi_n : j\in {\rm sgn}(\chi)). 
			\]
			Take $j\in {\rm sgn}(\chi)$, and suppose that it is the $k$-th largest element in  $\sgn(\chi)$.
			It suffices to verify that
			\begin{equation}\label{HCLDSoOdd}
							\delta_j  =  (-1)^{k - 1} \cdot (-1)^{j - 1} \delta_0 \chi_n  = (-1)^{n-j-k} \delta_{n}.
			\end{equation}
			In fact, one has
			\[
				n - j - k = \# \set{ j \leq i < n | i \notin \sgn(\chi)} = \# \set{ j \leq i < n | i \in  \sgn(\delta)}.
			\]
			Therefore,  $\delta_j$ and $\delta_n$ differ by $(-1)^{n-j-k}$, which shows that 
			\eqref{HCLDSoOdd} holds.

  \medskip
\noindent
 {\bf The symplectic/metaplectic groups:}

\medskip
\noindent
(i) {\it Recursive algorithm}. 
			We first write $\delta = (m_1  \delta_{k_1}, m_2 \delta_{k_2}, \dots, m_{\ell} \delta_{k_\ell})$ as in the unitary case.
			By the associativity of the operation $\oplus$, we can rewrite $\CO(\delta)$ as 
						\[
 			\CO(\delta)  = c_{m_\ell \delta_{k_\ell}} \oplus \cdots \oplus c_{m_1 \delta_{k_1}} \oplus c_{-m_1 \delta_{k_1}} \oplus \cdots \oplus c_{-m_\ell \delta_{k_\ell}},
			\]
 			Then the first row of  $\CO(\delta)$ is given by picking one sign from each $m_i \delta_{k_i}$ and $-m_i \delta_{k_i}$, and the remaining rows of $\CO(\delta)$ are generated by the same algorithm for the HC-descent 
			\be\label{HCDSp}
			\delta': = ((m_1-1)\delta_{k_1}, (m_2-1)\delta_{k_2}, \ldots, (m_\ell-1)\delta_{k_\ell}). 
 			\ee
 			This indeed gives a recursive algorithm for the associated variety.
		
	\medskip
\noindent	
		(ii) {\it Matching the first rows}. 
		 We now show that the first rows of $\CO(\delta)$ and $\CO(\varphi,\chi)$ are the same. It suffices to check that their lengths and last signs are respectively the same.  			
         Recall from Section \ref{ssec:AV} that $\delta$ and $\chi$ are related by 
		 \[
		 \delta_i = (-1)^{n-i+1} \chi_i,\quad i=1,2,\dots, n.
 		 \]
 It follows that
		\be \label{sgnHCSp}
		\sgn(\chi) = \set{ 1\leq i< n |  \delta_i \delta_{i+1}=1}. 
 		\ee		
 	Hence the length of the first row of  $\CO(\delta)$ is 
			\[
				2\ell = 2 + 2 \# \sgn(\delta)  = 2 + 2 (n - 1 -  \# \sgn(\chi)) = 2(n- s_\chi)  = p_1.
 			\]
 			The last sign of the first row of $\CO(\delta)$ is $-\delta_n = \chi_n = \epsilon_1$. This proves that 
 			the first rows of  $\CO(\delta)$ and $\CO(\varphi, \chi)$ are the same. 

\medskip
\noindent		
 	(iii)  {\it Matching HC-descent and $L$-descent}.  Finally, we show that $\delta'$ and $\chi'$ are related as in Section \ref{ssec:AV}. 
 	From \eqref{HCDSp} and \eqref{sgnHCSp} it is clear that 
 	\[
 	\delta' = (\delta_j : j\in {\rm sgn}(\chi)). 
 	\]
 Recall from Section \ref{ssec:AWF} that 
 			\[
			\chi' = ((-1)^{n-j+1} \chi_n : j\in {\rm sgn}(\chi)). 
			\]
			Take $j\in {\rm sgn}(\chi)$, and suppose that it is the $k$-th largest element in  $\sgn(\chi)$.
			It suffices to verify that
			\begin{equation}\label{HCLDSp}
 							\delta_j  =  (-1)^{k} \cdot (-1)^{n-j+1}\chi_n  = (-1)^{n-j-k + 1}\chi_n.
 			\end{equation}
 			In fact, one has
 			\[
				n - j - k = \# \set{ j \leq i < n | i \notin \sgn(\chi)} = \# \set{ j \leq i < n | i \in  \sgn(\delta)}.
			\] 			
 Therefore,  $\delta_j$ and $\delta_n = -\chi_n$ differ by $(-1)^{n-j-k}$, which shows that 
			\eqref{HCLDSp} holds.
\subsection{Examples}

To illustrate the proof in Subsection~\ref{ssec:DS}, we give examples for unitary and orthogonal groups.

\subsubsection{Unitary groups}

We first carry out the algorithm for a discrete series representation $\pi = \pi(\varphi, \chi)$ for $\RU(10,8)$.
Let the character $\chi$ be
\[
-+-+--+-+--+-++---,
\]
Then by Section \ref{ssec:AV}, the corresponding character $\delta$ of Harish-Chandra parameter is
\[
+++++-----++++--+-.
\]
We can rewrite $\CO(\delta)$ as
\[
\CO(\delta) = c_{5, 0} \oplus c_{0, 5} \oplus c_{4, 0} \oplus c_{0, 2} \oplus c_{1, 0} \oplus c_{0, 1}
\]
Then the first row of $\CO(\delta)$ is given by picking one sign from each entry, and the remaining rows of $\CO(\delta)$ are generated by the same algorithm for the HC-descent
\[
    \delta' = ++++----+++-.
\]
The first rows of $\CO(\delta)$ and $\CO(\varphi, \chi)$ are the same, and the HC-descent $\delta'$ and the L-descent $\chi' = -+-++-+--+--$ (as computed in Subsection~\ref{ssec:AW-U}) are related by the same formula as in Section \ref{ssec:AV}.

\subsubsection{Special orthogonal groups}
Let $\pi = \pi(\varphi, \chi)$ be a discrete series representation for $\SO(16, 15)$ with character
\[
-++-+-++-+--+++,	
\]
and the auxiliary sign $\delta_0 = -1$.
Then by Section \ref{ssec:AV}, the corresponding character $\delta$ of Harish-Chandra parameter is
\[
--+++++----++-+.	
\]
We can rewrite $\CO(\delta)$ as
\[
\CO(\delta) = c_{1, 0} \oplus c_{0, 1} \oplus c_{2, 0} \oplus c_{0, 4} \oplus c_{5, 0} \oplus c_{0, 2} \oplus c_{0, 1} \oplus c_{0, 2} \oplus c_{5, 0} \oplus c_{0, 4} \oplus c_{2, 0} \oplus c_{0, 1} \oplus c_{1, 0}.
\]
Since $\delta_0 \delta_1 = 1$, the first row of $\CO(\delta)$ is given by picking one sign from each row for index $\geq 2$, and one sign from $c_{0, 2} \oplus c_{0, 1} \oplus c_{0, 2}$, and the remaining rows of $\CO(\delta)$ are generated by the algorithm for even orthogonal group for the HC-descent
\[
    \delta' = --++++---+.
\]
The first rows of $\CO(\delta)$ and $\CO(\varphi, \chi)$ are the same, and the HC-descent $\delta'$ and the L-descent $\chi' = +--+-++-++$ (as computed in Subsection~\ref{ssec:AW-SO}) are related by the same formula as in Section \ref{ssec:AV}.

\section{Proof of the Main Theorem} \label{sec:PF}

Combining the results in previous sections,  we  
prove the following main result of this paper. 

\begin{thm}
\label{Thm:WFS}
Assume that $F=\BR$ and $\pi=\pi(\varphi,\chi)\in \Pi_\BR(G)$ has a generic local $L$-parameter $\varphi$. 
If the component group $\CS_\varphi$ is trivial, Conjecture \ref{conj:wfss} holds. If the component group $\CS_\varphi$ is non-trivial, then 
\[
\wf_\tr(\pi)^{r-\mx}=\wf_\ari(\pi)^{r-\mx}\subset\wf_\wm(\pi)^{s-\mx}.
\]
Moreover, the $\BR$-rational nilpotent orbits in $\wf_\wm(\pi)^{s-\mx}$ lie in a single stable nilpotent orbit. 
\end{thm}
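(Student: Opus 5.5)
The plan is to assemble the main theorem from the results already established, following the strategy diagram in the introduction, and to split into the two cases according to whether $\CS_\varphi$ is trivial.

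\emph{Trivial component group.} If $\CS_\varphi$ is trivial, the Vogan packet $\Pi_\varphi[G^*]$ is a singleton. Then $G$ is quasi-split and $\pi$ is generic with respect to every Whittaker datum, so $\pi$ is large. By Theorem~\ref{FRS-WFS}(1), $\wf_\ari(\pi)^{r-\mx}$ consists of all $\BR$-rational regular nilpotent orbits.

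Genericity for each Whittaker datum gives $\Wh_\CO(\pi)\neq 0$ for every regular $\BR$-rational orbit $\CO$. Since the regular orbits are the maximal elements of $\CN_\BR(\Fg)_\circ$ under both orders, $\wf_\wm(\pi)^{r-\mx}$ and $\wf_\wm(\pi)^{s-\mx}$ consist exactly of the regular orbits. (The stable maximal set is the unique stable regular orbit, which contains all rational regular ones.)

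On the analytic side, $\AV(\pi)$ is the full nilpotent cone of $\Fp$, as noted before Corollary~\ref{AWF=AV}. The Schmid--Vilonen theorem \cite{SV00}, via Kostant--Sekiguchi, then gives that $\wf_\tr(\pi)^{r-\mx}$ is the set of all regular real orbits. Hence all three sets in \eqref{s-max} and \eqref{r-max} coincide.

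\emph{Non-trivial component group.} Here I would argue in three steps.
\begin{enumerate}
\item Corollary~\ref{AWF=AV}, which rests on Theorem~\ref{thm:AV} and \cite{SV00}, gives $\wf_\tr(\pi)^{r-\mx}=\wf_\ari(\pi)^{r-\mx}=\{\CO(\pi)\}$.
\item Theorem~\ref{thm:des=ari} with Corollary~\ref{cor:wf} gives $\CO(\pi)\in\wf_\ari(\pi)=\wf_\des(\pi)\subset\wf_\wm(\pi)$.
\item It remains to show that $\CO(\pi)$ is stably maximal in $\wf_\wm(\pi)$, and that all members of $\wf_\wm(\pi)^{s-\mx}$ share one stable orbit.
\end{enumerate}

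For step 3, I use Matumoto \cite[Corollary 4]{Mat87}: every $\CO'\in\wf_\wm(\pi)$ lies in $\CV(\mathrm{Ann}\,\pi)$. By \cite[Corollary 4.7, Theorem 8.4]{V91}, $\CV(\mathrm{Ann}\,\pi)$ is the closure of a single complex orbit $\CO^{\st}(\pi)$, and $\AV(\pi)=\overline{\CO^\st(\pi)\cap\AV(\pi)}$. Since $\AV(\pi)=\overline{\CO(\pi)_\Fp}$ by Theorem~\ref{thm:AV}, and Kostant--Sekiguchi preserves complexifications, $\CO(\pi)_\Fp\subset\CO^\st(\pi)$. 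Hence $\CO(\pi)^\st$ is the stable orbit whose Zariski closure is $\CV(\mathrm{Ann}\,\pi)$.

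Consequently every $\CO'\in\wf_\wm(\pi)$ satisfies $\CO'\leq_\st\CO(\pi)$. Since $\CO(\pi)\in\wf_\wm(\pi)$, it is stably maximal. Moreover, an element of $\wf_\wm(\pi)^{s-\mx}$ must have stable orbit equal to $\CO(\pi)^\st$, because equality under $\leq_\st$ in both directions forces equal stable orbits. This yields the final assertion.

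The main obstacle is the identification in step 3: checking that the complexification of $\CO(\pi)$ is really the dense orbit in $\CV(\mathrm{Ann}\,\pi)$. This needs the compatibility of the Kostant--Sekiguchi correspondence with complex orbits, and Vogan's result that $\AV(\pi)$ meets the open orbit of $\CV(\mathrm{Ann}\,\pi)$ densely; I would verify both carefully. The metaplectic case needs Matumoto's result for genuine representations; I expect this to hold, since the argument only uses the Lie algebra action.
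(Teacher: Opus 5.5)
Your proposal is correct and follows the paper's proof essentially step for step. Corollary~\ref{AWF=AV} gives $\wf_\tr(\pi)^{r-\mx}=\wf_\ari(\pi)^{r-\mx}=\{\CO(\pi)\}$, Theorem~\ref{thm:des=ari} with Corollary~\ref{cor:wf} places $\CO(\pi)$ in $\wf_\wm(\pi)$, and Matumoto \cite{Mat87} together with Vogan's description of $\CV(\mathrm{Ann}\,\pi)$ \cite{V91} yields stable maximality and the single-stable-orbit statement; your treatment of the trivial component group and your derivation of $\CO(\pi)_\Fp\subset\CO^\st(\pi)$ from the density of $\CO^\st(\pi)\cap\AV(\pi)$ just spell out what the paper calls ``clear'' or states tersely (its equality $\CO(\varphi,\chi)_{\frak p}=\CO^{\rm st}(\pi)\cap\frak p$ is really used only as this containment).
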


\begin{proof}
Suppose that $\pi=\pi(\varphi,\chi)$ under the local Langlands correspondence  as in Section \ref{sec:ELP}, where $(\varphi,\chi)$ denotes the enhanced $L$-parameter of $\pi$. If the component group $\CS_\varphi$ is trivial, then $G$ is quasi-split and all the three sets 
$\wf_{\wm}(\pi)^{r-\mx}$, $\wf_\ari(\pi)^{r-\mx}$ and $\wf_\tr(\pi)^{r-\mx}$ are equal, consisting of all the regular nilpotent orbits in $\frak g$. Thus the assertion is clear in this case.

Assume that $\CS_\varphi$ is non-trivial. By Corollary \ref{AWF=AV}, 
\[
\wf_\tr(\pi)^{r-\mx}= \wf_\ari(\pi)^{r-\mx}=\{\CO(\varphi,\chi)\}
\]
is a singleton. By Theorem \ref{thm:des=ari}, it holds that $\wf_\ari(\pi)=\wf_\des(\pi)$. Hence from Corollary \ref{cor:wf} it follows that 
\[
\CO(\varphi,\chi)\in\wf_\ari(\pi)=  \wf_\des(\pi) \subset \wf_\wm(\pi).
\]
Recall that by \cite{SV00} we have
\[
{\rm AV}(\pi) = \overline{\CO(\varphi,\chi)_{\frak p}}.
\]
Let $\CV({\rm Ann}\,\pi)\subset \frak g_\BC \cong \Fg_\BC^*$ be the associated variety of the annihilator ideal ${\rm Ann}\,\pi$ of  $\pi$
(which is a two-sided ideal of $U(\frak g_\BC)$). By \cite[Corollary 4]{Mat87}, if $\CO\in \wf_\wm(\pi)$ then 
\[
\CO\subset \CV({\rm Ann}\,\pi).
\]
By \cite[Corollary 4.7]{V91},  $\CV({\rm Ann}\,\pi)$ is the Zariski closure of a single stable nilpotent 
orbit $\CO^{\rm st}(\pi)$, and by \cite[Theorem 8.4]{V91} we have
\[
\av(\pi) \subset \CV({\rm Ann}\,\pi)\cap \frak p,\qquad \CO(\varphi,\chi)_{\frak p} = \CO^{\rm st}(\pi)\cap \frak p.
\]
In particular, if 
$\CO\in \WF_\wm(\pi)^{s-\mx}$ then $\CO\subset \CO^{\rm st}(\pi)$. 
This implies 
\[
\CO(\varphi,\chi) \in \WF_\wm(\pi)^{s-\mx},
\]
and also the last statement of the theorem. The proof of the theorem is finished.
\end{proof}

From the proof of Theorem \ref{Thm:WFS}, we obtain the following consequence. For any $\CO\in\CN_F(\Fg)_\circ$, let $(\ul{p},\{V_{p_i}\})$ be the sesquilinear Young tableau associated with $\CO$ via the correspondence given in \cite{GZ} as in Section \ref{ssec:RNO}. Set $p(\CO):=\ul{p}$, the partition in $(\ul{p},\{V_{p_i}\})$ of $\CO$. 
If $\Omega$ is a subset of $\CN_F(\Fg)_\circ$, we define $p(\Omega)$ accordingly as a set of partitions. 

\begin{cor}\label{cor:WFS-p}
    Assume that $F=\BR$ and $\pi=\pi(\varphi,\chi)\in \Pi_\BR(G)$ has a generic local $L$-parameter $\varphi$.
    Then the following equalities hold:
    \[
    p(\WF_\wm(\pi)^{s-\mx})=p(\WF_\ari(\pi)^{s-\mx})=p(\WF_\tr(\pi)^{s-\mx}),
    \]
    which is a singleton.
\end{cor}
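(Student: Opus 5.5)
The argument splits according to whether $\CS_\varphi$ is trivial, and in each case we read off partitions from the rational information already established in Theorem \ref{Thm:WFS} and its proof. The map $p$ is constant on stable orbits, because the partition of the sesquilinear Young tableau is exactly the $\ol F$-orbit datum. Hence for any subset $\Omega$, $p(\Omega)$ depends only on the set of stable orbits meeting $\Omega$. Moreover, the $s$-maximal elements of $\Omega$ are precisely the rational orbits lying in the maximal stable orbits among those meeting $\Omega$.

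\emph{Trivial $\CS_\varphi$.} Here $G$ is quasi-split and $\pi$ is generic with respect to every Whittaker datum. By the first case of Theorem \ref{Thm:WFS} and Theorem \ref{FRS-WFS}(1), all three maximal sets consist of rational regular orbits. Each of the three wavefront sets contains a regular orbit, which is stably maximal in $\CN_F(\Fg)_\circ$. Therefore $\wf_\square(\pi)^{s-\mx}$ consists of regular orbits for $\square\in\{\wm,\ari,\tr\}$, and all three partition sets equal the singleton consisting of the regular partition of $\Fg$.

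\emph{Non-trivial $\CS_\varphi$.} Put $\CO^{\rm st}:=\CO(\varphi,\chi)^{\st}$; the claim is that every $s$-maximal set lies in $\CO^{\rm st}$. The proof of Theorem \ref{Thm:WFS} shows that every member of $\wf_\wm(\pi)^{s-\mx}$ lies in $\CO^{\rm st}(\pi)$ and that $\CO(\varphi,\chi)$ belongs to it, so $\CO^{\rm st}(\pi)=\CO(\varphi,\chi)^{\st}$ and $p(\wf_\wm(\pi)^{s-\mx})=\{p(\CO(\varphi,\chi))\}$.

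For $\wf_\tr$, I would use the Schmid--Vilonen identification together with Theorem \ref{thm:AV}. This gives that the maximal support of the wavefront cycle is the Kostant--Sekiguchi preimage of $\AV(\pi)=\ol{\CO(\varphi,\chi)_\Fp}$. Consequently every element of $\wf_\tr(\pi)$ lies in the Hausdorff closure of $\CO(\varphi,\chi)$, hence in the Zariski closure of $\CO^{\rm st}$. Since $\CO(\varphi,\chi)\in\wf_\tr(\pi)$, the $s$-maximal members are exactly those in $\CO^{\rm st}$.

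For $\wf_\ari$, I would invoke the inclusion $\wf_\ari(\pi)=\wf_\des(\pi)\subset\wf_\wm(\pi)\subset\CV({\rm Ann}\,\pi)=\ol{\CO^{\rm st}(\pi)}$, which follows from Corollary \ref{cor:wf} and Matumoto's result. Since $\CO(\varphi,\chi)\in\wf_\ari(\pi)$, the stably maximal members again lie in $\CO^{\rm st}$.

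The main obstacle is the $\wf_\tr$ step, namely justifying that every orbit in $\wf_\tr(\pi)$, not only the rationally maximal ones, is dominated by $\CO(\varphi,\chi)$. I would deduce this from the fact that the wavefront set of $\Theta_\pi$ is the closure of its leading terms, as in Barbasch--Vogan and Schmid--Vilonen. Rational closure then implies stable closure, which suffices.
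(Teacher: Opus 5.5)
Your argument is correct. In the trivial case and for $\wf_\wm$ it is what the paper does, since the paper simply reads the corollary off the proof of Theorem~\ref{Thm:WFS}. For $\wf_\tr$ and $\wf_\ari$ your route differs, and it is heavier than needed.

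The paper uses nothing beyond $\wf_\tr(\pi)^{r-\mx}=\wf_\ari(\pi)^{r-\mx}=\{\CO(\varphi,\chi)\}$, which comes from Corollary~\ref{AWF=AV} and Theorem~\ref{FRS-WFS}(2). It then passes from $r$-maximal to $s$-maximal members by the order-theoretic remark closing Section~\ref{sec:PF}. That passage works as follows:
\begin{itemize}
\item $\wf_\tr(\pi)$ sits inside the finite poset $(\CN_F(\Fg)_\circ,\leq_F)$, so every member lies below some $r$-maximal member, hence below $\CO(\varphi,\chi)$.
\item Lying in the Hausdorff closure implies $\leq_\st$. So the $s$-maximal members are those in $\CO(\varphi,\chi)^\st$.
\item By a dimension count, such a member must be $\CO(\varphi,\chi)$ itself.
\end{itemize}
The same argument applies to $\wf_\ari$.

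So what you flag as the main obstacle is not one. The fact that $\CO(\varphi,\chi)$ dominates every orbit of $\wf_\tr(\pi)$ is already contained in $\wf_\tr(\pi)^{r-\mx}=\{\CO(\varphi,\chi)\}$. You do not need the Barbasch--Vogan/Schmid--Vilonen statement that the wavefront set is the closure of its leading terms. That input is also less robust, because its precise meaning depends on the archimedean definition of $\wf_\tr$, which the paper only sketches.

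Your detour for $\wf_\ari$ through $\wf_\ari=\wf_\des\subset\wf_\wm\subset\CV({\rm Ann}\,\pi)=\overline{\CO^{\rm st}(\pi)}$ is valid. It gives the uniform statement that all three wavefront sets lie in the Zariski closure of a single stable orbit. The corollary itself, however, only needs the formal finite-poset argument above.
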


It is clear that $p(\WF_\square(\pi)^{s-\mx})=p(\WF_\square(\pi)^{r-\mx})$ for $\WF_\square(\pi) = \WF_\ari(\pi)$ or $\WF_\tr(\pi)$ 
over $\BR$ considered in this paper, which is expected to hold for $\WF_\wm(\pi)$ as well.

\end{document}